\def\N{\mathbb{N}}
\def\F{\mathbb{F}}
\def\dis{\displaystyle}
\newtheorem{theorem}{Theorem}[section]
\newtheorem{corollary}[theorem]{Corollary}
\newtheorem{lemma}[theorem]{Lemma}
\newtheorem{definition}[theorem]{Definition}
\newtheorem{remark}[theorem]{Remark}
\newtheorem{examples}[theorem]{Examples}
\begin{document}
\title{Some multiplicative functions over $\F_2$
%only divisible by Mersenne polynomials
}
%\author{Luis H. Gallardo - Olivier Rahavandrainy \\
\author{Luis H. Gallardo, Olivier Rahavandrainy \\
Univ Brest, UMR CNRS 6205\\
Laboratoire de Math\'ematiques de Bretagne Atlantique\\
%Laboratoire de Math\'ematiques (UMR 6205)\\UFR Sciences -
%Universit\'e de Brest\\
e-mail: luis.gallardo@univ-brest.fr, olivier.rahavandrainy@univ-brest.fr}
\maketitle
\begin{itemize}
%\item[a)]
%Running head:  Binary multiplicative functions
%\item[b)]
%Keywords:  multiplicative functions, Dirichlet convolution, inverse
%\item[c)]
\item[] Mathematics Subject Classification (2010): 11T55, 11T06.
%\item[d)]
%Corresponding author:
%\begin{center} Luis H. Gallardo
%\end{center}
\end{itemize}
%\newpage~\\
{\bf{Abstract}}\\
We adapt (over $\F_2$) the general notions of multiplicative function, Dirichlet convolution and Inverse.
We get some interesting results, namely necessary conditions for an odd binary polynomial to be perfect. Note that we are inspired by the ``analogous'' works in \cite{Gall-Rahav-newcongr} and \cite{Touchard}, about odd perfect numbers.

\section{Perfect polynomial over $\F_2$}
%\subsection{Generality}
Let $A \in \F_2[x]$ be a nonzero polynomial. We say that $A$ is \emph{even} if it has a linear factor and it is
\emph{odd} otherwise. We define a
\emph{Mersenne prime polynomial} over $\F_2$ as an irreducible polynomial of the form
$1+x^a(x+1)^b$, for some positive integers $a,b$.
We say that a divisor $d$ of $A$ is \emph{unitary} if $\displaystyle{\gcd(d,\frac{A}{d}) = 1}$.
Let $\omega(A)$ denote the number of distinct irreducible (or
\emph{prime}) factors of $A$ over $\F_2$ and let $\sigma(A)$ (resp. $\sigma^*(A)$) denote the sum of all divisors (resp. of all unitary divisors) of
$A$ ($\sigma$ and $\sigma^*$ are multiplicative functions). If $\sigma(A) = A$ (resp. $\sigma^*(A) = A$), then we
say that $A$ is \emph{perfect} (resp. \emph{unitary perfect}). The notion of perfect polynomials is introduced (\cite{Canaday})
by E. F. Canaday in $1941$. Many extended studies (\cite{Beard2}, \cite{Gall-Rahav4}, \cite{Gall-Rahav7},
\cite{Gall-Rahav5},
\cite{Gall-Rahav-Mers}) allow to give a list of such polynomials. We get:\\
- the ``trivial'' ones, of the form $(x^2+x)^{2^n-1}$, for some positive integer $n$,\\
- nine others which are the unique even and only divisible by Mersenne primes (\cite{Gall-Rahav-Mers}, Theorem 1.1),\\
- and two ones which are divisible by a non Mersenne prime (\cite{Gall-Rahav13}).

We are unable to find odd perfect polynomials. However, one gets the following results for such a polynomial $A$:\\
- $A$ is a square: $A = S^2$. Furthermore, we say that $A$ is \emph{special} if $S$ is square-free (\cite{Gall-Rahav4}),\\
%Finally, we say that an odd perfect polynomial is \emph{special} if it is of the form ${P_1}^2\cdots {P_m}^2$, where %each $P_j$ is an odd prime.
- the number of irreducible factors of $A$, counted with multiplicity, is at least $12$ and $\deg(A) > 200$ (\cite{Pollack}),\\
- $\omega(A) \geq 5$ and if $A$ is special, then $\omega(A) \geq 10$ (\cite{Gall-Rahav4} and \cite{Gall-Rahav7}).
%We recall here the known results about (special) perfect polynomials over $\F_2$.
%There are eleven even perfect polynomials. If $A$ is odd an perfect then\\
%Some of multiplicative functions have simple expressions (see Sections ...).\\

\section{Multiplicative function over $\F_2$}
%We recall that $\M_q$ is the analogue of the set of positive integers.\\
\begin{definition} \label{defmultip}
\emph{Let $f: \F_2[x] \setminus {0} \to \F_2[x]$ be a map.
%An \emph{multiplicative function} over $\F_2$ is a map.
It is said to be \emph{multiplicative} (resp. \emph{totally
multiplicative}) if $f(AB)= f(A)f(B)$ whenever $\gcd(A,B) = 1$
(resp. for any $A, B \in \F_2[x] \setminus {0}$).}
%We define also an \emph{additive} (resp. \emph{totally
%additive}) arithmetic function to be a map $f: \M_q
%\to
%{\cal{R}}$ such that $f(AB)= f(A)+f(B)$ whenever $\gcd(A,B) = 1$
%(resp. for any $A, B \in \M_q$).
\end{definition}
\begin{lemma} \label{propertmultip}
Let $f$ be a multiplicative function. Then, $f(1) = 1$ and $f$ is completely determined by the values of $f(P^r)$, for $P$ irreducible and $r \in \N^*$.
\end{lemma}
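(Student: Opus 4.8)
The plan is to establish the two assertions separately, both flowing directly from the multiplicativity hypothesis together with unique factorization in $\F_2[x]$.

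First I would treat $f(1) = 1$. Since $\gcd(A,1) = 1$ for every $A \in \F_2[x] \setminus \{0\}$, multiplicativity gives $f(A) = f(A \cdot 1) = f(A) f(1)$, hence $f(A)\bigl(1 + f(1)\bigr) = 0$ in $\F_2[x]$ (recall that $-1 = 1$ over $\F_2$). Choosing $A$ with $f(A) \neq 0$ — which exists as soon as $f$ is not the identically zero map, the case implicitly excluded here — and using that $\F_2[x]$ is an integral domain, I would cancel $f(A)$ to conclude $1 + f(1) = 0$, i.e. $f(1) = 1$. Alternatively, the special case $A = 1$ already yields $f(1) = f(1)^2$, so $f(1)\bigl(f(1)+1\bigr) = 0$ forces $f(1) \in \{0,1\}$, and the nonzero convention selects $f(1) = 1$.

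For the second assertion I would invoke unique factorization: every nonzero $A$ is uniquely a product $A = P_1^{r_1} \cdots P_k^{r_k}$ of powers of distinct monic irreducibles $P_i$ with $r_i \in \N^*$ (over $\F_2$ the only unit is $1$, so no leading-coefficient ambiguity arises). The prime powers $P_i^{r_i}$ are pairwise coprime, so I would prove by induction on $k$ that $f(A) = \prod_{i=1}^{k} f(P_i^{r_i})$. The base case $k = 0$ is $A = 1$, where the empty product equals $f(1) = 1$; for the inductive step I would split off one factor, writing $A = B \cdot P_k^{r_k}$ with $B = P_1^{r_1} \cdots P_{k-1}^{r_{k-1}}$ and $\gcd(B, P_k^{r_k}) = 1$, apply multiplicativity to obtain $f(A) = f(B)\, f(P_k^{r_k})$, and conclude by the induction hypothesis. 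This exhibits $f(A)$ as a product of values of the form $f(P^r)$, which is exactly the claim that such values determine $f$ completely.

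I do not expect a genuine obstacle: the argument is the classical one for multiplicative functions transplanted to $\F_2[x]$. The only point requiring care is the derivation of $f(1) = 1$, which needs $f$ to be not identically zero; I would make this convention explicit (or simply remark that the zero map is the lone multiplicative function with $f(1) = 0$), so that the statement holds as written.
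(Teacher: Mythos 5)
Your proof is correct, and it cannot diverge from the paper's argument because the paper gives none: Lemma \ref{propertmultip} is stated without proof, as a routine fact, and your write-up is exactly the standard argument the authors implicitly rely on (cancel $f(1)$ from $f(A)=f(A)f(1)$ in the integral domain $\F_2[x]$, then use unique factorization into pairwise coprime prime powers and induct on the number of distinct irreducible factors). Your side remark is also a genuine catch rather than pedantry: Definition \ref{defmultip} as written does not exclude the identically zero map, which is multiplicative in the paper's sense but has $f(1)=0$, so the lemma literally requires the convention you spell out (that $f$ is not identically zero); the paper silently assumes this, as is customary.
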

\begin{examples} \label{multipexamples}~\\
\emph{
$\bullet$ Multiplicative identity $\delta$:
$\delta(A) = 1$ if $A = 1$, $\delta(A) = 0$ otherwise.\\
$\bullet$ Constant function $z$: $z(A)=1$, for any
$A$ ($z$ is not the identity).\\
%$\bullet$ \underline{Indicator function}: $\ 1_C$, where $C$ is a
%subset of $\F_q[x]$, $1_C(A) = 1$ if $A \in C$, $0$ otherwise.\\
%\\
$\bullet$ Identity function $\ {\rm{Id}}$:  ${\rm{Id}}(A) = A$, for any $A$.\\
%$\bullet$ $k$-th power function $\ {\rm{Id}}_k$:
%${\rm{Id}}_k(A) =
%A^k$ for any $A$.\\
$\bullet$ The Euler function $\phi$: $\phi(P^r) = P^r + P^{r-1} \text{ if $P$ is irreducible and $r \geq 1$}$.\\
$\bullet$ The function $\sigma$: $\sigma(A)$ equals the sum of all divisors of $A$.\\
$\bullet$ The function $\sigma^*$: $\sigma^*(A)$ equals the sum of all unitary divisors of $A$.\\
$\bullet$ The ${\rm{M\ddot{o}bius}}$ function $\ \mu$:
$\mu(A) = \left\{\begin{array}{l} 1 \mbox{ if $A$ is square-free},\\
%(-1)^r \mbox{ if } A = P_1 \cdots P_r \mbox{ for some primes } P_1,
%\ldots, P_r,\\
0 \mbox{ otherwise}.
\end{array}
\right.$\\
%$\bullet$ The function $\ell:= Id \cdot \mu$, so that $\ell(A) = \left\{\begin{array}{l} A \mbox{ if $A$ is %square-free},\\
%0 \mbox{ otherwise}.
%\end{array}
%\right.$\\
%$\bullet$ Fonction $\tau$: $\tau(A) = \displaystyle{\sum_{D \mid A} 1}$ (the number of all divisors of $A$).\\
%$\bullet$ The function $\omega$: $\omega(A) = \text{ the number of distinct (monic) irreducible divisors of $A$}$.\\
%\\
Note that $\delta, \ z, \ {\rm{Id}}, \ {\rm{Id}}_k$ are all totally
multiplicative whereas $\mu$, $\sigma$ and $\sigma^*$ are
only multiplicative.}
\end{examples}

\section{Dirichlet Convolution}
%\subsection{Dirichlet Convolution}
\begin{definition}
For two multiplicative functions $f, g$, we define the convolution
product as: $$(f * g) (A) = \displaystyle{\sum_{D
\mid A}} f(D) \ g(\frac{A}{D}).$$
\end{definition}
We get by direct computations the following lemmas and examples.
\begin{lemma} \label{convmultip}
The convolution $f * g$ is also multiplicative. Moreover,
$$\begin{array}{l}
f * g = g* f, \ f*(g*h)=(f*g)*h, \ f * \delta = f, \ f*(g+h)=f*g +
f*h,\\
\mbox{and $f(g*h) = fg*fh $ if $f$ is totally multiplicative}.
\end{array}$$
\end{lemma}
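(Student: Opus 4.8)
The plan is to verify every assertion directly from the definition of the convolution product, using only unique factorization in $\F_2[x]$ together with the commutativity and associativity of addition and multiplication in $\F_2[x]$; no special feature of characteristic $2$ is needed beyond the pointwise rule $(g+h)(A) = g(A) + h(A)$. I would establish the multiplicativity of $f*g$ first, and then dispatch the five algebraic identities in turn.

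For multiplicativity, fix $A, B$ with $\gcd(A,B) = 1$. By unique factorization the assignment $(D_1, D_2) \mapsto D_1 D_2$ is a bijection between pairs $(D_1, D_2)$ with $D_1 \mid A$ and $D_2 \mid B$ and the divisors $D$ of $AB$; moreover any such $D_1, D_2$ are coprime, and so are $A/D_1$ and $B/D_2$. Writing the defining sum over $D \mid AB$, substituting $D = D_1 D_2$, and applying the multiplicativity of $f$ and $g$ to the coprime factorizations $D = D_1 D_2$ and $AB/D = (A/D_1)(B/D_2)$, the double sum factors as $(f*g)(A) \cdot (f*g)(B)$.

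For the remaining identities I would argue as follows. Commutativity follows from the substitution $D \mapsto A/D$, which merely permutes the divisors of $A$. Associativity is obtained by expanding both $f*(g*h)$ and $(f*g)*h$ into the common symmetric sum $\sum_{DEF = A} f(D)\, g(E)\, h(F)$ taken over ordered factorizations $DEF = A$. The identity $f*\delta = f$ is immediate, since $\delta(A/D)$ vanishes unless $D = A$. Distributivity is the pointwise linearity $(g+h)(A/D) = g(A/D) + h(A/D)$ pulled through the finite sum. Finally, when $f$ is totally multiplicative one has $f(D)\,f(A/D) = f(A)$ for every $D \mid A$, so that $(fg*fh)(A) = \sum_{D \mid A} f(D)g(D)\,f(A/D)h(A/D) = f(A)\sum_{D \mid A} g(D)\,h(A/D) = f(A)\,(g*h)(A)$, which is exactly $\bigl(f(g*h)\bigr)(A)$.

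The only step requiring genuine care is the multiplicativity bijection, where one must check both that the correspondence $(D_1, D_2) \leftrightarrow D$ is well defined and that coprimality is preserved on $D$ as well as on $A/D$; once this is in place, every other claim reduces to a mechanical rearrangement of finite sums, valid over $\F_2[x]$ without modification.
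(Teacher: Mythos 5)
Your proof is correct, and it takes the same route the paper intends: the paper offers no written argument for this lemma, merely asserting that it follows ``by direct computations,'' and your verification (the divisor-pair bijection for multiplicativity, the substitution $D \mapsto A/D$ for commutativity, the symmetric triple sum for associativity, and the pointwise manipulations for the remaining identities) is precisely that direct computation carried out in full. Nothing further is needed.
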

\begin{lemma} \label{convexamples} The following equalities hold:
$$z * \mu = \delta, \ \phi * z = {\rm{Id}}, \ \sigma=  {\rm{Id}} * z.$$
\end{lemma}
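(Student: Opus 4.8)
The plan is to verify all three identities by reducing to prime powers. By Lemma~\ref{convmultip}, each convolution $z * \mu$, $\phi * z$, and ${\rm{Id}} * z$ is multiplicative, and the right-hand sides $\delta$, ${\rm{Id}}$, $\sigma$ are multiplicative as well (the first two being even totally multiplicative, per Examples~\ref{multipexamples}). Hence, by Lemma~\ref{propertmultip}, it suffices to check that the two sides of each equality agree on every prime power $P^r$ with $P$ irreducible and $r \in \N^*$ (the value at $A = 1$ being $1$ on both sides automatically, again by Lemma~\ref{propertmultip}).

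For the first identity, I would expand, using $z \equiv 1$,
$$(z * \mu)(P^r) = \sum_{i=0}^{r} z(P^i)\,\mu(P^{r-i}) = \sum_{j=0}^{r} \mu(P^j).$$
Since $P^j$ is square-free exactly when $j \in \{0,1\}$, only the terms $\mu(1)$ and $\mu(P)$ survive, giving $1 + 1$. The key point, and what makes the statement work over $\F_2$, is that $1 + 1 = 0$ in characteristic $2$, so the sum vanishes for every $r \geq 1$, matching $\delta(P^r) = 0$.

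For the second identity, I would compute, recalling $\phi(1) = 1$ and $\phi(P^i) = P^i + P^{i-1}$,
$$(\phi * z)(P^r) = \sum_{i=0}^{r} \phi(P^i) = 1 + \sum_{i=1}^{r}\bigl(P^i + P^{i-1}\bigr).$$
Again exploiting characteristic $2$, the bracketed sum telescopes: every intermediate power $P, \ldots, P^{r-1}$ occurs twice and cancels, leaving $P^r + 1$; adding the initial $1$ then cancels the constant term, so $(\phi * z)(P^r) = P^r = {\rm{Id}}(P^r)$. The third identity is the most direct: since $z \equiv 1$ and ${\rm{Id}}(P^i) = P^i$, one has $({\rm{Id}} * z)(P^r) = \sum_{i=0}^{r} P^i = 1 + P + \cdots + P^r$, which is exactly the sum of all divisors of $P^r$, i.e. $\sigma(P^r)$.

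I do not anticipate a serious obstacle here; the one thing to watch is the $\F_2$-arithmetic. Both of the first two identities rest on cancellations ($1 + 1 = 0$ and the telescoping) that would fail over a field of characteristic different from $2$, so the only real care is to track the coefficients correctly — which becomes routine once one remembers that addition and subtraction coincide in $\F_2[x]$.
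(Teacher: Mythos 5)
Your proof is correct and takes essentially the same approach as the paper, whose entire proof is the single instruction ``Consider the value at $P^r$, for $P$ irreducible and $r \in \N^*$''; you have simply carried out explicitly the prime-power computations (and the reduction via multiplicativity) that the authors left implicit. The characteristic-$2$ cancellations you highlight are indeed the heart of the first two identities.
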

\begin{proof} Consider the value at $P^r$, for $P$ irreducible and $r \in \N^*$.
\end{proof}

\begin{lemma} [${\rm{M\ddot{o}bius}}$ {\bf{inversion formula}}] \label{Mobius}~\\
One has $g = f * z$ if and only if $f = g*\mu$.
\end{lemma}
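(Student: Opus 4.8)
The plan is to prove the Möbius inversion formula, which states that $g = f * z$ if and only if $f = g * \mu$, by exploiting the algebraic structure of the Dirichlet convolution established in the preceding lemmas. The crucial ingredient is the relation $z * \mu = \delta$ from Lemma~\ref{convexamples}, together with the fact that $\delta$ is a two-sided identity for convolution and that convolution is commutative and associative (Lemma~\ref{convmultip}).

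First I would prove the forward direction. Assuming $g = f * z$, I convolve both sides on the right with $\mu$ to obtain $g * \mu = (f * z) * \mu$. Using associativity, the right-hand side equals $f * (z * \mu)$, and then substituting $z * \mu = \delta$ gives $f * \delta = f$. Hence $f = g * \mu$, as desired. The converse is entirely symmetric: starting from $f = g * \mu$, I convolve on the right with $z$, apply associativity to get $g * (\mu * z)$, use commutativity so that $\mu * z = z * \mu = \delta$, and conclude $g * \delta = g$, i.e.\ $g = f * z$.

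I do not anticipate a serious obstacle here, since every step reduces to an identity already recorded in Lemmas~\ref{convmultip} and~\ref{convexamples}; the proof is purely formal manipulation in the convolution ring. The only point requiring a moment's care is making sure that the functions involved remain in the class for which convolution is defined and behaves as stated --- in particular that $f * z$ and $g * \mu$ are again of the appropriate type so that reconvolving is legitimate, but this is guaranteed because the convolution of multiplicative functions is multiplicative (Lemma~\ref{convmultip}). Thus the entire argument is a two-line symmetric computation, and the substance of the result lives entirely in the previously verified relation $z * \mu = \delta$.
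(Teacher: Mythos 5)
Your proof is correct and is exactly the argument the paper intends: the paper states Lemma~\ref{Mobius} without a written proof, presenting it as a direct consequence of the identities in Lemmas~\ref{convmultip} and~\ref{convexamples}, and your two-line manipulation (convolve with $\mu$, respectively $z$, and use associativity, commutativity, $z*\mu=\delta$, and $f*\delta=f$) is precisely that derivation. Your added remark that $f*z$ and $g*\mu$ remain multiplicative, so reconvolving is legitimate, is a sound point of care consistent with the paper's framework.
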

%\begin{proof}
%If $g = f * z$, then $g*\mu = (f*z)*\mu = f*(z * \mu) = f* \delta = f$.\\
%Conversely, if $f = g*\mu$, then $f * z = (g*\mu)*z = g*(\mu *z) = g*(z*\mu) =
%g*\delta = g$.
%\end{proof}

\begin{remark}
\emph{If $f,g$ and $h$ are all multiplicative, with $f(S)=g(S)$, for some $S$, then in general, $(h *f)(S) \not= (h*g)(S)$.\\
For example, $S = x(x+1),\ f = {\rm{Id}}, \ h=g = \sigma$. One has: $\sigma(S) = S = {\rm{Id}}(S)$ but $(\sigma * \sigma)(S) = 0 \not= 1 = (\sigma * {\rm{Id}})(S)$ (see Lemmas \ref{squareconv} and \ref{convsigmaId})}.
\end{remark}
\subsection{The square convolution}
\begin{definition}
The {\it{square convolution}} of $f$, denoted by $f^{\rm{2conv}}$, is the convolution $f*f$.
\end{definition}
\begin{lemma} \label{squareconv} Let $P$ be irreducible. Then for any $r \geq 0$,
$f^{\rm{2conv}}(P^{2r})= (f(P^r))^2$ and $f^{\rm{2conv}}(P^{2r+1})= 0$.
\end{lemma}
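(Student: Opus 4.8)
The plan is to expand $f^{\rm{2conv}}(P^n) = (f*f)(P^n)$ directly from the definition of the convolution, exploiting the fact that the divisors of a prime power $P^n$ are exactly $1 = P^0, P, P^2, \ldots, P^n$. First I would write
$$f^{\rm{2conv}}(P^n) = \sum_{D \mid P^n} f(D)\, f(P^n/D) = \sum_{i=0}^{n} f(P^i)\, f(P^{n-i}),$$
so that the whole statement reduces to analysing a single finite sum in $\F_2[x]$ indexed by $i \in \{0, \ldots, n\}$, with the two cases $n = 2r$ and $n = 2r+1$ to be separated at the end.

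The core of the argument is a pairing (symmetry) observation in characteristic $2$. The map $i \mapsto n-i$ is an involution on $\{0, \ldots, n\}$, and the summand $f(P^i)\, f(P^{n-i})$ is visibly invariant under it. Hence every pair $\{i, n-i\}$ with $i \neq n-i$ contributes its summand twice, and $2\, f(P^i)\, f(P^{n-i}) = 0$ over $\F_2$, so such pairs cancel. Only a fixed point $i = n-i$ of the involution can survive in the sum.

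It then remains to locate the fixed points. For $n = 2r+1$ odd, the equation $i = n-i$ has no integer solution, so the involution is fixed-point-free; every term cancels in a pair and $f^{\rm{2conv}}(P^{2r+1}) = 0$. For $n = 2r$ even, the unique fixed point is $i = r$, contributing $f(P^r)\, f(P^r) = (f(P^r))^2$ while all remaining terms cancel as above, giving $f^{\rm{2conv}}(P^{2r}) = (f(P^r))^2$. I do not expect a genuine obstacle here; the one point requiring care is to keep the cancellation honest — it is addition in $\F_2$, not in $\Z$, that annihilates the paired terms — and to note that the endpoints $i = 0$ and $i = n$ are treated exactly like any other pair, with $f(P^0) = f(1) = 1$ supplied by Lemma \ref{propertmultip}.
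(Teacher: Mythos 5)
Your proof is correct and is essentially the paper's own argument: the paper also expands $(f*f)(P^n)=\sum_{t=0}^{n}f(P^t)f(P^{n-t})$ and cancels terms in characteristic $2$ by pairing $t$ with $n-t$, splitting the sum into two mirror-image halves plus (in the even case) the middle term $(f(P^r))^2$. Your involution/fixed-point phrasing is just a cleaner packaging of the same pairing, so there is nothing to add.
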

\begin{proof}
%It suffices to consider the case where $A = P^m$, with $P$ irreducible and $m \geq 1$.\\
%If $m=2t$ is even, then\\
First, $f^{\rm{2conv}}(P^{0})= 1$. For $r \geq 1$, \\
$\begin{array}{lll}
f^{\rm{2conv}}(P^{2r}) &=& \dis{\sum_{t=0}^{2r} f(P^{t}) \cdot f(P^{2r-t})}
=\dis{f(P^{2r}) + f(P^{2r})+\sum_{t=1}^{2r-1} f(P^{t}) \cdot f(P^{2r-t})}\\
&=&\dis{\sum_{t=1}^{r-1} f(P^{t}) f(P^{2r-t}) + (f(P^r))^2 + \sum_{t=r+1}^{2r-1} f(P^{t})  f(P^{2r-t})}
%&=&0+[f(P)f(P^{2t}) + \cdots + f(P^t) f(P^{t+1})]+[f(P^{t+1}) f(P^{t})+\cdots +f(P^{2t})f(P)]\\
=(f(P^r))^2.
\end{array}$\\
\\
%If $m=2t+1$ is odd, then\\
$\begin{array}{lll}
f^{\rm{2conv}}(P^{2r+1}) &=& \dis{\sum_{t=0}^{2r+1} f(P^{t}) \cdot f(P^{2r+1-t})}
=\dis{0+\sum_{t=1}^{2r} f(P^{t}) \cdot f(P^{2r+1-t})}\\
&=&\dis{\sum_{t=1}^{r} f(P^{t}) \cdot f(P^{2r+1-t}) + \sum_{t=r+1}^{2r} f(P^{t}) \cdot f(P^{2r+1-t})}
%&=&0+[f(P)f(P^{2t}) + \cdots + f(P^t) f(P^{t+1})]+[f(P^{t+1}) f(P^{t})+\cdots +f(P^{2t})f(P)]\\
=0.
\end{array}$
\end{proof}
%\begin{examples} One has, for any irreducible polynomial $P$ and for $m \geq 1$:\\
%$\phi^{\rm{2conv}}(P^{2m})= (\phi(P^m))^2 = P^{2m}+P^{2m-2}$, $\phi^{\rm{2conv}}(P^{2m+1})= 0$\\
%$\sigma^{\rm{2conv}}(P^{2m})= (\sigma(P^m))^2 = 1+P^2+P^4+\cdots +P^{2m}$, %$\sigma^{\rm{2conv}}(P^{2m+1})= 0$\\
%$\sigma^{\rm{*2conv}}(P^{2m})= (\sigma^*(P^m))^2 = 1+P^{2m}$, $\sigma^{\rm{*2conv}}(P^{2m+1})= 0$.
%\end{examples}
We immediately get
\begin{corollary} \label{corol-squareconv}
Let $A$ be a nonzero binary polynomial. Then
$f^{\rm{2conv}}(A)= A$ if and only if $A = S^2$ and $f(S) = S$.
\end{corollary}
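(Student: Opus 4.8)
The plan is to reduce everything to the prime-power formula of Lemma~\ref{squareconv} via multiplicativity. Since $f^{\rm{2conv}} = f*f$ is multiplicative by Lemma~\ref{convmultip}, writing the factorization $A = \prod_i P_i^{a_i}$ gives $f^{\rm{2conv}}(A) = \prod_i f^{\rm{2conv}}(P_i^{a_i})$, and each factor is then controlled by the two cases of Lemma~\ref{squareconv}. Throughout I would use that, in characteristic $2$, the squaring map is both additive and multiplicative, so that $\left(\prod_i u_i\right)^2 = \prod_i u_i^2$.

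For the ``if'' direction, suppose $A = S^2$ with $f(S) = S$. Writing $S = \prod_i P_i^{s_i}$, so that $A = \prod_i P_i^{2 s_i}$, I would apply the even-exponent case of Lemma~\ref{squareconv} to each factor to obtain $f^{\rm{2conv}}(A) = \prod_i (f(P_i^{s_i}))^2 = \left(\prod_i f(P_i^{s_i})\right)^2 = (f(S))^2 = S^2 = A$, where the middle equality uses multiplicativity of $f$ and the last uses the hypothesis.

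For the ``only if'' direction, suppose $f^{\rm{2conv}}(A) = A$. First I would argue that $A$ must be a square: if some exponent $a_i$ were odd, the odd-exponent case of Lemma~\ref{squareconv} would make the corresponding factor vanish, forcing $f^{\rm{2conv}}(A) = 0$, which contradicts $A \neq 0$. Hence every $a_i = 2 s_i$ is even and $A = S^2$ with $S = \prod_i P_i^{s_i}$. Applying the even-exponent case exactly as above then yields $(f(S))^2 = f^{\rm{2conv}}(A) = A = S^2$.

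The only genuine obstacle is the final step, concluding $f(S) = S$ from $(f(S))^2 = S^2$; everything before it is a direct bookkeeping of Lemma~\ref{squareconv}. Here I would invoke that $\F_2[x]$ is an integral domain of characteristic $2$: from $(f(S))^2 = S^2$ we get $(f(S) + S)^2 = 0$, hence $f(S) + S = 0$, i.e.\ $f(S) = S$. The boundary case $A = 1$ is consistent with both directions, since then $S = 1$ and $f(1) = 1$ by Lemma~\ref{propertmultip}.
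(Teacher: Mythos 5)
Your proof is correct and is exactly the argument the paper intends: the paper states the corollary as an immediate consequence of Lemma~\ref{squareconv}, and your write-up just makes explicit the multiplicative decomposition, the vanishing on odd prime powers, and the characteristic-$2$ step $(f(S))^2=S^2\Rightarrow f(S)=S$. No gaps.
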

\subsection{The convolution $\sigma * \mu$}
\begin{lemma} \label{convsigmamu} Let $P$ be irreducible and $m \in \N^*$. Then $$\text{$(\sigma * \mu)(P^{m})=P^m$ so that $\sigma * \mu = {\rm{Id}}$.}$$
\end{lemma}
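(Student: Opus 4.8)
The plan is to evaluate the convolution $(\sigma * \mu)(P^m)$ directly from its definition on a prime power. By Lemma \ref{convmultip} the function $\sigma * \mu$ is multiplicative and, by Lemma \ref{propertmultip}, it is therefore completely determined by its values at powers of irreducibles; so establishing the identity at every $P^m$ suffices to conclude $\sigma * \mu = {\rm{Id}}$.

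First I would list the divisors of $P^m$, namely the powers $P^t$ with $0 \le t \le m$, and write $(\sigma * \mu)(P^m) = \sum_{t=0}^{m} \sigma(P^t)\, \mu(P^{m-t})$. The key simplification comes from the definition of $\mu$ in Examples \ref{multipexamples}: $\mu(P^k)$ equals $1$ exactly when $P^k$ is square-free, i.e. when $k \in \{0,1\}$, and vanishes for $k \ge 2$. Hence all but two terms of the sum drop out, leaving $(\sigma * \mu)(P^m) = \sigma(P^m)\,\mu(1) + \sigma(P^{m-1})\,\mu(P) = \sigma(P^m) + \sigma(P^{m-1})$, where one reads $\sigma(P^{0}) = 1$ in the case $m = 1$.

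It then remains to compute this sum in characteristic $2$. Writing $\sigma(P^m) = 1 + P + \cdots + P^m$ and $\sigma(P^{m-1}) = 1 + P + \cdots + P^{m-1}$, every common term cancels because $1 + 1 = 0$ in $\F_2$, and only the top power $P^m$ survives; thus $(\sigma * \mu)(P^m) = P^m$, as claimed. The only point requiring care is this sign-free cancellation in $\F_2$, which is precisely what makes the argument cleaner than its classical analogue over $\Z$ (where one would instead get an alternating telescoping sum).

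Finally, as a cross-check I note that the identity also follows instantly from M\"obius inversion: Lemma \ref{convexamples} gives $\sigma = {\rm{Id}} * z$, so applying Lemma \ref{Mobius} with $g = \sigma$ and $f = {\rm{Id}}$ yields ${\rm{Id}} = \sigma * \mu$ at once. There is no real obstacle in this proof; the main (minor) subtlety is simply recalling that $\mu$ annihilates every non-square-free argument, which is what collapses the convolution down to the two boundary terms.
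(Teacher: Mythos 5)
Your proof is correct and follows essentially the same route as the paper: expand the convolution at $P^m$, use that $\mu$ kills all non-square-free arguments to reduce to $\sigma(P^m)+\sigma(P^{m-1})$, and cancel in characteristic $2$ to get $P^m$. Your closing observation that the identity also drops out of Lemma \ref{Mobius} applied to $\sigma = {\rm{Id}} * z$ is a nice (and valid) shortcut the paper does not use, but it is only a remark on top of the same main argument.
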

\begin{proof} One has $(\sigma * \mu)(P)= \sigma(P) + \mu(P) = 1+P + 1 = P$ and
for $m \geq 2$, $(\sigma * \mu)(P^{m})= \sigma(P^m) + \mu(P^m) + \sigma(P^{m-1}) \mu(P) + 0 \cdots+0 =
\sigma(P^m) + \sigma(P^{m-1}) = P^m$.
%It suffices to consider the case where $A = P^m$, with $P$ irreducible. We claim that
%$(\sigma * \phi)(P^m) = 0$, if $m$ is odd and $(\sigma * \phi)(P^m) = P^m$, if $m$ is even.
\end{proof}
\begin{corollary} \label{corol-convsigmamu}
If $A$ is odd and perfect, then
$$\dis{\sum_{D \mid A,D\not=1,A, A/D \ square-free} \sigma(D) =0}.$$
\end{corollary}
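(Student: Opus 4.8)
The plan is to read the corollary off directly from the identity $\sigma * \mu = \mathrm{Id}$ established in Lemma \ref{convsigmamu}, by expanding the convolution and tracking which terms survive. Evaluating at $A$ gives
$$A = (\sigma * \mu)(A) = \sum_{D \mid A} \sigma(D)\, \mu(A/D).$$
Since $\mu(A/D)$ equals $1$ when $A/D$ is square-free and $0$ otherwise, the right-hand sum collapses to $\sum_{D \mid A,\ A/D \text{ square-free}} \sigma(D)$. First I would isolate the two extreme divisors $D = 1$ and $D = A$ from this sum.

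For the endpoint $D = A$ one has $A/D = 1$, which is square-free, so this term contributes $\sigma(A)$; because $A$ is perfect, $\sigma(A) = A$. For the endpoint $D = 1$ one has $A/D = A$, so the term survives only if $A$ itself is square-free. Here I would invoke the standing structural fact recalled in Section 1 that an odd perfect polynomial is a square, $A = S^2$; a nontrivial square is never square-free, so $\mu(A) = 0$ and the $D = 1$ term drops out.

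Collecting these observations, the identity becomes
$$A = \sigma(A) + \sum_{\substack{D \mid A,\ D \neq 1, A \\ A/D \text{ square-free}}} \sigma(D) = A + \sum_{\substack{D \mid A,\ D \neq 1, A \\ A/D \text{ square-free}}} \sigma(D),$$
and since we are working over $\F_2$, adding $A$ to both sides cancels the two copies of $A$ and leaves the claimed vanishing of the middle sum.

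I expect no genuine obstacle: the content is entirely bookkeeping. The only points that need care are that the $D = 1$ term must be discarded, which is exactly where the hypothesis that $A$ is odd perfect, hence a nontrivial square, is used, and that the final cancellation is legitimate precisely because the coefficient field is $\F_2$, where $A + A = 0$.
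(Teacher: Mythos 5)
Your proof is correct and follows essentially the same route as the paper: both evaluate the identity $\sigma * \mu = \mathrm{Id}$ from Lemma \ref{convsigmamu} at $A$, use perfection to get $\sigma(A) = A$, use the fact that an odd perfect $A$ is a square $S^2$ to kill the $D = 1$ term via $\mu(A) = 0$, and cancel over $\F_2$. Your write-up just makes explicit the bookkeeping that the paper compresses into one line.
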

\begin{proof}
One has: $\sigma(A) = A$ and $A = S^2$. Thus, \\
$\dis{A+0 + \sum_{D \mid A,D\not=1,A, A/D \ square-free} \sigma(D) = (\sigma * \mu)(A) = {\rm{Id}}(A) =A}$.
\end{proof}
\subsection{The convolution $\sigma * z$}
\begin{lemma} \label{convsigmaz} Let $P$ be irreducible and $r \in \N$. Then $$\text{$(\sigma * z)(P^{2r})=(\sigma(P^r))^2$ and $(\sigma * z)(P^{2r+1})
=  P \cdot (\sigma(P^r))^2$.}$$
\end{lemma}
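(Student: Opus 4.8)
The plan is to compute $(\sigma * z)(P^n)$ directly from the definition and then separate the two parities of $n$. Since $z \equiv 1$, the convolution collapses to $(\sigma * z)(P^n) = \sum_{D \mid P^n} \sigma(D) = \sum_{t=0}^{n} \sigma(P^t)$, the only divisors of $P^n$ being the powers $P^0,\ldots,P^n$. So everything reduces to understanding the polynomial $\sum_{t=0}^{n} \sigma(P^t)$ in $\F_2[x]$.

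First I would expand each summand as $\sigma(P^t) = \sum_{j=0}^{t} P^j$ and exchange the order of summation. The coefficient of $P^j$ in $\sum_{t=0}^{n}\sum_{j=0}^{t} P^j$ is then the number of indices $t$ with $j \le t \le n$, that is $n+1-j$, read modulo $2$. The key observation is that $n+1-j$ is odd precisely when $j \equiv n \pmod 2$; hence over $\F_2$ only the monomials $P^j$ with $j$ of the same parity as $n$ survive, and $(\sigma * z)(P^n) = \sum_{0\le j\le n,\, j\equiv n\,(2)} P^j$.

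To finish I would split into the two cases. If $n = 2r$, the surviving exponents are $0,2,\ldots,2r$, so $(\sigma * z)(P^{2r}) = \sum_{i=0}^{r} P^{2i}$; invoking the characteristic-two identity $\sum_i (P^i)^2 = \bigl(\sum_i P^i\bigr)^2$ (the Frobenius/freshman's dream) this equals $\bigl(\sum_{i=0}^{r} P^i\bigr)^2 = (\sigma(P^r))^2$. If $n = 2r+1$, the surviving exponents are $1,3,\ldots,2r+1$, so $(\sigma * z)(P^{2r+1}) = \sum_{i=0}^{r} P^{2i+1} = P\sum_{i=0}^{r} P^{2i} = P\,(\sigma(P^r))^2$ by the same identity. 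This gives both formulas, and the boundary case $r=0$ is checked directly ($(\sigma*z)(1)=1$ and $(\sigma*z)(P)=1+(1+P)=P$).

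The computation has no real obstacle; the only point requiring care is the mod-$2$ bookkeeping of the coefficient $n+1-j$ and the clean recognition of $\sum_i P^{2i}$ as a square via the Frobenius map. As an alternative I could avoid the coefficient count entirely: using $\sigma = {\rm{Id}} * z$ (Lemma \ref{convexamples}) and associativity gives $\sigma * z = {\rm{Id}} * (z * z)$, and Lemma \ref{squareconv} applied to $f = z$ shows that $(z*z)(P^m)$ equals $1$ for $m$ even and $0$ for $m$ odd; the same parity selection of exponents then drops out of $\sum_{t=0}^{n} P^t\,(z*z)(P^{n-t})$.
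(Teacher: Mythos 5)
Your proof is correct, but it follows a genuinely different route from the paper's. The paper argues by induction on $r$: assuming the stated formulas at $P^{2r}$ and $P^{2r+1}$, it passes to $P^{2r+2}$ and $P^{2r+3}$ using (implicitly) the telescoping relation $(\sigma * z)(P^{m}) = \sigma(P^{m}) + (\sigma * z)(P^{m-1})$; this is short but only verifies an answer that must be guessed in advance. You instead derive the answer in closed form: from $(\sigma * z)(P^n) = \sum_{t=0}^{n}\sigma(P^t)$ you swap the order of summation, reduce the multiplicity $n+1-j$ of $P^j$ modulo $2$, and conclude that exactly the exponents $j \equiv n \pmod 2$ survive; the Frobenius identity $\sum_{i}(P^i)^2 = \bigl(\sum_i P^i\bigr)^2$ then exhibits the result as $(\sigma(P^r))^2$, resp.\ $P\,(\sigma(P^r))^2$. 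This has the advantage of explaining structurally why squares appear (characteristic $2$) rather than confirming them line by line. Your alternative argument is arguably the cleanest of the three: $\sigma * z = ({\rm Id} * z) * z = {\rm Id} * (z * z)$ by Lemma \ref{convexamples} and associativity (Lemma \ref{convmultip}), and Lemma \ref{squareconv} with $f = z$ gives $(z*z)(P^m) = 1$ or $0$ according to the parity of $m$, so the parity filtering becomes structural --- it is exactly the support of $z*z$; both cited lemmas precede this one in the paper, so no circularity arises. Note also that your direct-counting method transfers essentially verbatim to Lemma \ref{convsigmastarz} on $\sigma^* * z$, which the paper again proves by a separate induction.
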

\begin{proof} By induction on $r$. The case $r=0$ is trivial. \\
Suppose that $(\sigma * z)(P^{2r})=(\sigma(P^r))^2$ and $(\sigma * z)(P^{2r+1})
=  P \cdot (\sigma(P^r))^2$.\\
%It suffices to consider the case where $A = P^m$, with $P$ irreducible. We claim that
%$(\sigma * \phi)(P^m) = 0$, if $m$ is odd and $(\sigma * \phi)(P^m) = P^m$, if $m$ is even.
One has: $\displaystyle{(\sigma * z)(P^{2r+2})= \sigma(P^{2r+2}) + z(P^{2r+2}) + \sum_{k=1}^{2r+1} \sigma(P^{2r+2-k}) \cdot z(P^k)}$.\\
Thus, $\displaystyle{(\sigma * z)(P^{2r+2}) = \sigma(P^{2r+2}) + 1 + \sum_{k=1}^{2r+1} \sigma(P^{2r+2-k}) = (1+\cdots+P^{r+1})^2}$.\\
$\displaystyle{(\sigma * z)(P^{2r+3})= \sigma(P^{2r+3}) + z(P^{2r+3}) + \sum_{k=1}^{2r+2} \sigma(P^{2r+3-k}) \cdot z(P^k)}$.\\
So, $\displaystyle{(\sigma * z)(P^{2r+3}) = \sigma(P^{2r+3}) + 1 + \sum_{k=1}^{2r+2} \sigma(P^{2r+3-k}) = P \cdot (1+\cdots+P^{r+1})^2}$.
\end{proof}
\begin{corollary} \label{corol-convsigmaz}
If $A$ is special and perfect, then $\dis{\sum_{D \mid A,D\not=1,A} \sigma(D) = A+1+\sigma^*(A)}$.
%so that $\dis{\sum_{D \mid A,D\not=1,A} \sigma(D)}$ is a square.
\end{corollary}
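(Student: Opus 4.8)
The plan is to recognize the left-hand sum as a piece of the convolution $\sigma * z$ and to evaluate that convolution using the special shape of $A$ together with Lemma \ref{convsigmaz}. Since $z$ is identically $1$, one has $(\sigma * z)(A) = \sum_{D \mid A} \sigma(D)$, so the desired sum is obtained by deleting the two boundary terms $D = 1$ and $D = A$. These contribute $\sigma(1) = 1$ and $\sigma(A)$ respectively, and since $A$ is perfect we have $\sigma(A) = A$. Working over $\F_2$, where subtraction and addition coincide, this gives
\[
\sum_{D \mid A,\, D \neq 1, A} \sigma(D) = (\sigma * z)(A) + 1 + A.
\]
Thus the corollary reduces to the single identity $(\sigma * z)(A) = \sigma^*(A)$.

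To establish this, I would exploit that $A$ is special: writing $A = S^2$ with $S$ square-free, every irreducible factor of $A$ occurs with exponent exactly $2$, so $A = \prod_i P_i^2$ for distinct primes $P_i$. Since $\sigma * z$ is multiplicative (Lemma \ref{convmultip}), I would factor $(\sigma * z)(A) = \prod_i (\sigma * z)(P_i^2)$ and apply Lemma \ref{convsigmaz} with $r = 1$ to each factor, obtaining $(\sigma * z)(P_i^2) = (\sigma(P_i))^2 = (1 + P_i)^2$, hence $(\sigma * z)(A) = \prod_i (1 + P_i)^2$.

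The final and only substantive step is to match this against $\sigma^*$. Because $\sigma^*$ is multiplicative and $\sigma^*(P_i^2) = 1 + P_i^2$ (the unitary divisors of $P_i^2$ being just $1$ and $P_i^2$), we have $\sigma^*(A) = \prod_i (1 + P_i^2)$. The two products coincide thanks to the Frobenius identity $(1 + P_i)^2 = 1 + P_i^2$ valid over $\F_2$; this characteristic-$2$ collapse is really the heart of the argument. Substituting $(\sigma * z)(A) = \sigma^*(A)$ into the displayed equation yields $\sum_{D \mid A,\, D \neq 1, A} \sigma(D) = A + 1 + \sigma^*(A)$, as claimed. The one thing to watch is the sign bookkeeping when removing the boundary terms, which is harmless since everything lives over $\F_2$.
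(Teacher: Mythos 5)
Your proof is correct and takes essentially the same route as the paper: both identify the sum as $(\sigma * z)(A)$ with the boundary terms $\sigma(1)=1$ and $\sigma(A)=A$ removed, and then use the identity $(\sigma * z)(A) = \sigma^*(A)$ for special $A$. You merely spell out what the paper leaves implicit, namely that this last identity comes from multiplicativity, Lemma \ref{convsigmaz} with $r=1$, and the characteristic-$2$ collapse $(1+P)^2 = 1+P^2$.
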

\begin{proof}
One has: $\sigma(A) = A$ and $A = S^2$, with $S$ square-free. Thus, \\
$\dis{A+1 + \sum_{D \mid A,D\not=1,A} \sigma(D) = (\sigma * z)(A) = \sigma^*(A)}$.
\end{proof}
\subsection{The convolution $\sigma * {\rm{Id}}$}
\begin{lemma}  \label{convsigmaId} Let $P$ be irreducible. Then for any $r \geq 0$, $$(\sigma * {\rm{Id}})(P^{2r})=(\sigma * {\rm{Id}})(P^{2r+1})
=  (\sigma(P^r))^2.$$
\end{lemma}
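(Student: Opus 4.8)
The plan is to compute $(\sigma * {\rm{Id}})(P^n)$ directly from the definition of the convolution and to exploit the fact that squaring is the Frobenius map over $\F_2$. First I would record the target in a convenient form: since $(u+v)^2 = u^2 + v^2$ over $\F_2$, one has $(\sigma(P^r))^2 = (1 + P + \cdots + P^r)^2 = 1 + P^2 + P^4 + \cdots + P^{2r} = \sum_{m=0}^{r} P^{2m}$, i.e. the sum of the even powers of $P$ up to $P^{2r}$. The whole lemma then reduces to showing that $(\sigma * {\rm{Id}})(P^n)$ equals this same sum of even powers for both $n = 2r$ and $n = 2r+1$.

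Next I would expand the convolution as $(\sigma * {\rm{Id}})(P^n) = \sum_{k=0}^n \sigma(P^k)\,P^{n-k}$ and substitute $\sigma(P^k) = \sum_{i=0}^k P^i$, obtaining the double sum $\sum_{k=0}^n \sum_{i=0}^k P^{n-k+i}$. For each fixed $k$ the inner sum is the consecutive block $P^{n-k} + P^{n-k+1} + \cdots + P^n$. The key step is then to count, over $\F_2$, how many times each monomial $P^j$ (for $0 \le j \le n$) occurs: the term $P^j$ arises exactly from those $k$ with $n - k \le j$, i.e. $k \ge n-j$, and since $0\le k \le n$ this gives the $j+1$ values $k \in \{n-j,\dots,n\}$. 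Hence the coefficient of $P^j$ is $j + 1 \bmod 2$, which equals $1$ precisely when $j$ is even.

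It follows that $(\sigma * {\rm{Id}})(P^n) = \sum_{j\ \text{even},\ 0 \le j \le n} P^j$. For $n = 2r$ the even exponents in $[0,n]$ are $0, 2, \dots, 2r$, and for $n = 2r+1$ they are again $0, 2, \dots, 2r$ (since $2r+1$ is odd). In both cases this is exactly $\sum_{m=0}^r P^{2m} = (\sigma(P^r))^2$, establishing the two stated equalities simultaneously. The elegance of this route is that it handles both parities at once, the odd case being forced to truncate at the same power $P^{2r}$ as the even case.

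I do not expect a genuine obstacle: the only place demanding care is the coefficient count and the observation that the two parities of $n$ cut off the even-power sum at the same point $P^{2r}$. As a cross-check, and as an alternative mirroring the proof of Lemma \ref{convsigmaz}, one could instead argue by induction, using the recursion $(\sigma * {\rm{Id}})(P^n) = P \cdot (\sigma * {\rm{Id}})(P^{n-1}) + \sigma(P^n)$ (obtained by isolating the $D=P^n$ term) to pass from exponents $2r, 2r+1$ to $2r+2, 2r+3$; the Frobenius identity above would again close each inductive step.
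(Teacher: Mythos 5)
Your proof is correct, but it takes a genuinely different route from the paper. Both arguments start from the same expansion $(\sigma * {\rm{Id}})(P^m) = \sum_{\ell=0}^{m} \sigma(P^{\ell})\, P^{m-\ell}$, but the paper evaluates this sum by passing to the fraction field: it substitutes $\sigma(P^{\ell}) = \frac{1+P^{\ell+1}}{1+P}$, sums the resulting geometric-type series, and lands on the closed form $(\sigma * {\rm{Id}})(P^m) = \frac{1+mP^{m+1}+(m+1)P^{m+2}}{1+P^2}$ (coefficients read modulo $2$), from which the two cases $m=2r$ and $m=2r+1$ follow by specialization. You instead stay entirely inside $\F_2[x]$: you expand the double sum $\sum_{k=0}^n \sum_{i=0}^k P^{n-k+i}$, observe that the monomial $P^j$ occurs exactly $j+1$ times so that only even $j$ survive in characteristic $2$, and then use the Frobenius identity $(1+P+\cdots+P^r)^2 = 1+P^2+\cdots+P^{2r}$ to recognize the surviving sum as $(\sigma(P^r))^2$. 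Your coefficient-parity count is airtight (for fixed $j$ the contributing indices are $k \in \{n-j,\dots,n\}$, and the needed $i = j-n+k$ always lies in $[0,k]$), and it has the advantage of treating both parities of $n$ uniformly and avoiding divisions by $1+P$ and $1+P^2$, whose mod-$2$ bookkeeping in the paper's numerator is the only delicate point there; the paper's method, in exchange, produces a single compact formula valid for all $m$, in the same style as its other convolution lemmas. Your closing remark is also accurate: the inductive alternative via $(\sigma * {\rm{Id}})(P^n) = \sigma(P^n) + P\cdot(\sigma * {\rm{Id}})(P^{n-1})$ is precisely the strategy the paper uses for Lemma~\ref{convsigmaz}, so either route would fit the paper's conventions.
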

\begin{proof} %It suffices to consider the case where $A = P^m$, with $P$ irreducible. We claim that
%$(\sigma * \phi)(P^m) = 0$, if $m$ is odd and $(\sigma * \phi)(P^m) = P^m$, if $m$ is even.
One has:\\
$\begin{array}{lcl}
(\sigma * {\rm{Id}})(P^m) &=& \dis{\sum_{\ell=0}^{m} \sigma(P^{\ell}) \cdot P^{m-\ell}}\\
&=&\dis{P^m+ \sigma(P^m)+ \sum_{\ell=1}^{m-1}  \sigma(P^{\ell}) \cdot P^{m-\ell}}\\
&=&\dis{\sigma(P^{m-1})+ \frac{1}{1+P} \cdot  \sum_{\ell=1}^{m-1}  (1+P^{\ell+1}) P^{m-\ell}}\\
&=&\dis{\sigma(P^{m-1})+ \frac{(m-1)P^{m+1}}{1+P} +  \frac{P}{1+P} \cdot  \frac{1+P^{m-1}}{1+P}}\\
&=&\dis{\frac{(1+P^2) \sigma(P^{m-1})+ (1+P) (m-1)P^{m+1}+P+P^m}{1+P^2}}\\
&=&\dis{\frac{1+mP^{m+1}+ (m+1)P^{m+2}}{1+P^2}}.
\end{array}$\\
\\
We get our results if we take $m=2r$ or $m=2r+1$.
\end{proof}
\begin{corollary} \label{corol-convsigmaId}
If $A=S^2$ is odd and perfect, then $\dis{\sum_{D \mid A,D\not=1,A} \frac{\sigma(D)}{D} = \frac{(\sigma(S))^2}{S^2}}$.
\end{corollary}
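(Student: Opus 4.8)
The plan is to recognize the stated sum as the convolution $\sigma * {\rm{Id}}$ evaluated at $A$, divided by $A$, and then to invoke Lemma \ref{convsigmaId}. Working in the fraction field $\F_2(x)$, I first note that since ${\rm{Id}}(A/D) = A/D$,
\[
(\sigma * {\rm{Id}})(A) = \sum_{D \mid A} \sigma(D) \cdot \frac{A}{D} = A \sum_{D \mid A} \frac{\sigma(D)}{D},
\]
so the full (unrestricted) divisor sum is $\frac{1}{A}(\sigma * {\rm{Id}})(A)$.

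To evaluate $(\sigma * {\rm{Id}})(A)$, I would use that it is multiplicative (Lemma \ref{convmultip}) together with $A = S^2$. Writing $S = \prod_i P_i^{a_i}$, the multiplicative factorization and Lemma \ref{convsigmaId} (applied with $r = a_i$ to each factor $P_i^{2a_i}$) give $(\sigma * {\rm{Id}})(A) = \prod_i (\sigma(P_i^{a_i}))^2 = (\sigma(S))^2$, the last step by multiplicativity of $\sigma$. Dividing by $A = S^2$ then yields $\sum_{D \mid A} \frac{\sigma(D)}{D} = \frac{(\sigma(S))^2}{S^2}$ for the \emph{full} sum. Unlike Corollary \ref{corol-convsigmaz}, no ``special'' hypothesis is required here, because Lemma \ref{convsigmaId} holds for every exponent, not only for $r = 1$.

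It remains to pass from the full sum to the restricted one by removing the two terms $D = 1$ and $D = A$. The term $D = 1$ contributes $\sigma(1)/1 = 1$, while $D = A$ contributes $\sigma(A)/A = A/A = 1$ by perfection $\sigma(A) = A$. The one point needing care is that we work over $\F_2$: these two boundary contributions satisfy $1 + 1 = 0$, so deleting them does not change the value, and the restricted sum equals $\frac{(\sigma(S))^2}{S^2}$ as claimed. Apart from this characteristic-$2$ cancellation of the boundary terms, the argument is a direct assembly of the preceding lemmas, so I anticipate no genuine obstacle.
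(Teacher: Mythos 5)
Your proof is correct and takes essentially the same route as the paper: both evaluate $(\sigma * {\rm{Id}})(A) = (\sigma(S))^2$ via Lemma \ref{convsigmaId} and multiplicativity, then strip off the boundary terms $D=1$ and $D=A$ using perfection $\sigma(A)=A$ and the characteristic-$2$ cancellation. The only cosmetic difference is that you divide by $A$ before removing the boundary terms, whereas the paper cancels $\sigma(A)+A=0$ first and then divides.
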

\begin{proof}
One has $A = S^2$, $0 = A + \sigma(A)$ and $(\sigma(S))^2 = (\sigma * {\rm{Id}})(S^2) = \sigma(A)+A+\dis{\sum_{D \mid A,D\not=1,A} \sigma(D) \cdot \frac{A}{D}}$.
\end{proof}
\subsection{The convolution $\sigma * \phi$}
\begin{lemma} \label{convsigmaphi} Let $P$ be irreducible. Then for any $r \geq 0$, $(\sigma * \phi)(P^{2r})= P^{2r}$ and $(\sigma * \phi)(P^{2r+1})= 0$.
\end{lemma}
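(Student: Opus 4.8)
The plan is to avoid a direct summation and instead exploit the algebra of the convolution product established in Lemmas \ref{convmultip} and \ref{convexamples}. The key observation is that $\sigma * \phi$ collapses to the square convolution of the identity function, which has already been evaluated in Lemma \ref{squareconv}.

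First I would recall from Lemma \ref{convexamples} the two identities $\sigma = {\rm{Id}} * z$ and $\phi * z = {\rm{Id}}$. Substituting the first into $\sigma * \phi$ and then using the commutativity and associativity of $*$ from Lemma \ref{convmultip}, I would write
$$\sigma * \phi = ({\rm{Id}} * z) * \phi = {\rm{Id}} * (z * \phi) = {\rm{Id}} * (\phi * z) = {\rm{Id}} * {\rm{Id}} = {\rm{Id}}^{\rm{2conv}},$$
where the penultimate equality is again $\phi * z = {\rm{Id}}$.

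Next I would apply Lemma \ref{squareconv} with $f = {\rm{Id}}$. Since ${\rm{Id}}(P^r) = P^r$, that lemma yields at once ${\rm{Id}}^{\rm{2conv}}(P^{2r}) = ({\rm{Id}}(P^r))^2 = P^{2r}$ and ${\rm{Id}}^{\rm{2conv}}(P^{2r+1}) = 0$, which are exactly the two claimed formulas.

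There is essentially no hard step along this route: the only thing to verify is that the reassociation is legitimate, and it is, because $\sigma, \phi, z, {\rm{Id}}$ are all multiplicative and $*$ is associative and commutative on multiplicative functions by Lemma \ref{convmultip}. As a cross-check, and as an alternative proof for a reader who prefers to stay elementary, one could instead expand $(\sigma * \phi)(P^m) = \sum_{\ell=0}^{m} \sigma(P^{m-\ell}) \phi(P^{\ell})$ directly, mirroring the telescoping manipulation used in the proof of Lemma \ref{convsigmaId}; there the only nuisance would be handling the $\ell = 0$ term (where $\phi(P^0) = 1$) separately and simplifying the resulting rational expression over $\F_2$, but it must reproduce the same answer.
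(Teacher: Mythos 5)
Your proof is correct, and it takes a genuinely different route from the paper's. The paper proves the lemma by brute force: it expands $(\sigma * \phi)(P^m) = \sum_{\ell=0}^{m} \sigma(P^{\ell}) \cdot \phi(P^{m-\ell})$ directly and shows the sum collapses to $(m-1)P^m$, which over $\F_2$ equals $P^m$ for $m$ even and $0$ for $m$ odd — exactly the sketch you offer as your ``cross-check'' alternative. Your main argument instead stays entirely at the level of the convolution algebra: from $\sigma = {\rm{Id}} * z$ and $\phi * z = {\rm{Id}}$ (Lemma \ref{convexamples}), together with commutativity and associativity (Lemma \ref{convmultip}), you get the identity $\sigma * \phi = {\rm{Id}} * {\rm{Id}} = {\rm{Id}}^{\rm{2conv}}$, and then Lemma \ref{squareconv} applied to $f = {\rm{Id}}$ hands you both formulas at once, using $({\rm{Id}}(P^r))^2 = P^{2r}$. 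Each step is legitimate: all four functions involved are multiplicative, so the reassociation is covered by Lemma \ref{convmultip}, and ${\rm{Id}}$ satisfies the hypotheses of Lemma \ref{squareconv}. What your approach buys is conceptual clarity — it exhibits the structural reason for the answer, namely that $\sigma * \phi$ \emph{is} the square convolution of the identity, so the vanishing at odd powers is just the characteristic-2 pairing phenomenon already proved once and for all in Lemma \ref{squareconv}; it also dovetails with Corollary \ref{corol-squareconv}, giving immediately that $(\sigma*\phi)(A) = A$ forces $A$ to be a square. What the paper's computation buys is self-containedness and the explicit uniform formula $(m-1)P^m$ for every $m$, without relying on the earlier identities.
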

\begin{proof}
%It suffices to consider the case where $A = P^m$, with $P$ irreducible. We claim that
%$(\sigma * \phi)(P^m) = 0$, if $m$ is odd and $(\sigma * \phi)(P^m) = P^m$, if $m$ is even.
By direct computations, as above, one has:
$$(\sigma * \phi)(P^m) = \dis{\sum_{\ell=0}^{m} \sigma(P^{\ell}) \cdot \phi(P^{m-\ell}) = (m-1)P^m}.$$
%$\begin{array}{lcl}
%(\sigma * \phi)(P^m) &=& \dis{\sum_{\ell=0}^{m} \sigma(P^{\ell}) \cdot \phi(P^{m-\ell})}\\
%&=&\dis{\sigma(P^m) + \phi(P^m)+ \sum_{\ell=1}^{m-1}  P^{m-\ell-1} \cdot (1+P) \cdot \sigma(P^{\ell})}\\
%&=&\dis{\sigma(P^m) + P^m + P^{m-1}+  \sum_{\ell=1}^{m-1}  P^{m-\ell-1} \cdot (1+P^{\ell+1})}\\
%&=&\dis{1+P+\cdots+P^{m-2} +  \sum_{\ell=1}^{m-1}  (P^{m-\ell-1} +P^{m})}\\
%&=&(m-1)P^m.
%\end{array}$\\
\end{proof}
\begin{corollary}\label{corol-convsigmaphi}
If $A$ is odd and perfect, then
$\phi(A)=\dis{\sum_{D\mid A, D \not=1,A} \sigma(D) \phi(\frac{A}{D})}$.
\end{corollary}
\begin{proof}
First, $A$ must be a square. So, we get $$\dis{A =(\sigma * \phi)(A)= \sigma(A) + \sum_{D\mid A, D \not=A} \sigma(D) \phi(A/D) =
A + \sum_{D\mid A, D \not=A} \sigma(D) \phi(A/D)}.$$
\end{proof}
\subsection{The convolution $\sigma^* * \mu$}
\begin{lemma} \label{convsigmastarmu} Let $P$ be irreducible and $m \in \N^*$. Then
$$\text{$(\sigma^* * \mu)(P)=P$ and $(\sigma^* * \mu)(P^{m})=P^{m}+P^{m-1} = \phi(P^m)$ if $m \geq 2$.}$$
\end{lemma}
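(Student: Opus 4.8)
The plan is to compute $(\sigma^* * \mu)(P^m)$ directly from the definition of Dirichlet convolution, exactly as in the proofs of Lemmas~\ref{convsigmamu} and \ref{convsigmastarmu}'s predecessors. Since $\mu(P^t) = 0$ for all $t \geq 2$ and $\mu(1) = \mu(P) = 1$, the convolution sum collapses: only the terms with $\frac{P^m}{D}$ square-free survive, i.e. only $D = P^m$ (giving $\sigma^*(P^m)\mu(1)$) and $D = P^{m-1}$ (giving $\sigma^*(P^{m-1})\mu(P)$). Thus $(\sigma^* * \mu)(P^m) = \sigma^*(P^m) + \sigma^*(P^{m-1})$ for $m \geq 1$, with the convention $\sigma^*(P^0) = \sigma^*(1) = 1$.

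Next I would substitute the known values of $\sigma^*$ on prime powers. By definition, the unitary divisors of $P^m$ are exactly $1$ and $P^m$ (since any proper power $P^t$ with $0 < t < m$ shares the common factor $P$ with $P^{m-t}$), so $\sigma^*(P^m) = 1 + P^m$ for $m \geq 1$. For $m = 1$ this gives $(\sigma^* * \mu)(P) = (1+P) + \sigma^*(1) = 1 + P + 1 = P$ over $\F_2$, matching the first claim. For $m \geq 2$ it gives $(\sigma^* * \mu)(P^m) = (1 + P^m) + (1 + P^{m-1}) = P^m + P^{m-1}$ over $\F_2$, since the two constant terms cancel in characteristic $2$. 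This is precisely $\phi(P^m)$ by the defining formula for the Euler function in Examples~\ref{multipexamples}.

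The whole argument is a short finite computation with no genuine obstacle; the only points requiring a little care are the characteristic-$2$ cancellation $1 + 1 = 0$ (which is what separates the $m = 1$ and $m \geq 2$ cases and produces the clean identification with $\phi$) and the bookkeeping convention $\sigma^*(P^0) = 1$ in the boundary term. One could alternatively phrase the result conceptually: since $\sigma^*$ and $\mu$ are multiplicative, so is $\sigma^* * \mu$, hence it is determined by its values on prime powers, and the above computation then fully specifies the function. I would present the direct computation as the cleanest route and simply note the equality with $\phi(P^m)$ at the end.
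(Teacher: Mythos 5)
Your proof is correct and follows essentially the same route as the paper's: expand the convolution, use that $\mu$ kills every term except those coming from $D = P^m$ and $D = P^{m-1}$, and conclude via $\sigma^*(P^m)+\sigma^*(P^{m-1}) = P^m + P^{m-1}$ in characteristic $2$. You simply spell out the $m=1$ case and the value $\sigma^*(P^m)=1+P^m$ explicitly, which the paper dismisses as trivial.
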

\begin{proof} %It suffices to consider the case where $A = P^m$, with $P$ irreducible. We claim that
%$(\sigma * \phi)(P^m) = 0$, if $m$ is odd and $(\sigma * \phi)(P^m) = P^m$, if $m$ is even.
%One has: $(\sigma^* * z)(P)= \sigma^*(P) + z(P) = 1+P+1$.\\
%$(\sigma^* * z)(P^2)= \sigma^*(P^2) + z(P^2) + \sigma^*(P) \cdot z(P)  = 1+P^2+1 + (1+P) \cdot 1 = 1+P+P^2$.
The case $m=1$ is trivial. For $m\geq 2$, one has
$(\sigma^* * \mu)(P^{m})=\sigma^*(P^{m}) + \mu(P^m) + \sigma^*(P^{m-1}) \mu(P) =
\sigma^*(P^{m}) + \sigma^*(P^{m-1}) = P^m+P^{m-1}$.
\end{proof}
\begin{corollary} \label{corol-convsigmastarmu}
If $A$ is a square, then $\dis{\phi(A)= \sum_{D \mid A, A/D \ square-free} \sigma^*(D)}$.
\end{corollary}
\begin{proof}
One has  $A = S^2$. Thus,
$\dis{\sum_{D \mid A, \ A/D \ square-free} \sigma^*(D) = (\sigma^* * \mu)(A) = \phi(A)}$.
\end{proof}
\subsection{The convolution $\sigma^* * z$}
\begin{lemma} \label{convsigmastarz} Let $P$ be irreducible and $r \in \N$. Then
$$\text{$(\sigma^* * z)(P^{2r})=\sigma(P^{2r})$ and $(\sigma^* * z)(P^{2r+1})
=  P \cdot \sigma(P^{2r})$.}$$
\end{lemma}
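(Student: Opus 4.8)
The plan is to compute the convolution directly from its definition, exactly as in the proofs of Lemmas \ref{convsigmaz} and \ref{convsigmamu}, exploiting two simplifications particular to this case: the factor $z$ is the constant function $1$, and $\sigma^*$ takes a very transparent form on prime powers. First I would record those values: since the only unitary divisors of $P^{\ell}$ are $1$ and $P^{\ell}$, we have $\sigma^*(1)=1$ and $\sigma^*(P^{\ell})=1+P^{\ell}$ for every $\ell \geq 1$. Because $z(P^{m-\ell})=1$ for all $\ell$, the convolution collapses to a single sum:
$$(\sigma^* * z)(P^m)=\sum_{\ell=0}^{m}\sigma^*(P^{\ell})=1+\sum_{\ell=1}^{m}(1+P^{\ell}).$$

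Next I would separate the constant contribution from the polynomial part. Over $\F_2$ the sum $\sum_{\ell=1}^{m}1$ equals $m \bmod 2$, while $\sum_{\ell=1}^{m}P^{\ell}=P+P^2+\cdots+P^m$, so that
$$(\sigma^* * z)(P^m)=1+(m \bmod 2)+\bigl(P+P^2+\cdots+P^m\bigr).$$
It is here that the parity of $m$ enters and splits the computation into the two announced formulas.

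Finally I would treat the two cases separately. For $m=2r$ the constant contribution $1+(2r \bmod 2)=1$ survives, giving $1+P+\cdots+P^{2r}=\sigma(P^{2r})$. For $m=2r+1$ the two constants cancel, since $1+1=0$ in $\F_2$, leaving $P+\cdots+P^{2r+1}=P\cdot(1+P+\cdots+P^{2r})=P\cdot\sigma(P^{2r})$, as claimed. The only point requiring any care — and hence the main obstacle, albeit a very mild one — is the bookkeeping of the $\F_2$-parity of the number of constant terms; every other step is a direct expansion. Alternatively, one could mirror the induction on $r$ carried out in Lemma \ref{convsigmaz}, but the closed form for $\sigma^*(P^{\ell})$ makes the direct summation the shorter route.
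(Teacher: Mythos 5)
Your proof is correct and is in substance the same computation as the paper's: both expand $(\sigma^* * z)(P^m)=\sum_{\ell=0}^{m}\sigma^*(P^\ell)$ using $z\equiv 1$ and $\sigma^*(P^\ell)=1+P^\ell$, and the result comes down to the $\F_2$-parity of the number of constant terms. The paper nominally phrases this as an induction on $r$, but its inductive step is just the same direct expansion you perform, so your closed-form summation is, if anything, a cleaner presentation of the identical argument.
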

\begin{proof} %It suffices to consider the case where $A = P^m$, with $P$ irreducible. We claim that
%$(\sigma * \phi)(P^m) = 0$, if $m$ is odd and $(\sigma * \phi)(P^m) = P^m$, if $m$ is even.
%One has: $(\sigma^* * z)(P)= \sigma^*(P) + z(P) = 1+P+1$.\\
%$(\sigma^* * z)(P^2)= \sigma^*(P^2) + z(P^2) + \sigma^*(P) \cdot z(P)  = 1+P^2+1 + (1+P) \cdot 1 = 1+P+P^2$.
By induction on $r$. The case $r=0$ is trivial. \\
Suppose that $(\sigma^* * z)(P^{2r})=\sigma(P^{2r})$ and $(\sigma^* * z)(P^{2r+1})
=  P \cdot \sigma(P^{2r})$.\\
%It suffices to consider the case where $A = P^m$, with $P$ irreducible. We claim that
%$(\sigma * \phi)(P^m) = 0$, if $m$ is odd and $(\sigma * \phi)(P^m) = P^m$, if $m$ is even.
One has: $\displaystyle{(\sigma^* * z)(P^{2r+2})= \sigma^*(P^{2r+2}) + z(P^{2r+2}) + \sum_{k=1}^{2r+1} \sigma^*(P^{2r+2-k}) \cdot z(P^k)}$.\\
Thus, $\displaystyle{(\sigma^* * z)(P^{2r+2}) = \sigma^*(P^{2r+2}) + 1 + \sum_{k=1}^{2r+1} \sigma^*(P^{2r+2-k}) = \sigma(P^{2r+2})}$,\\
$\displaystyle{(\sigma^* * z)(P^{2r+3})= \sigma^*(P^{2r+3}) + z(P^{2r+3}) + \sum_{k=1}^{2r+2} \sigma^*(P^{2r+3-k}) \cdot z(P^k)}$.\\
So, $\displaystyle{(\sigma^* * z)(P^{2r+3}) = \sigma^*(P^{2r+3}) + 1 + \sum_{k=1}^{2r+2} \sigma^*(P^{2r+3-k}) = P \cdot \sigma(P^{2r+2})}$.
\end{proof}
\begin{corollary} \label{corol-convsigmastarz}
If $A$ is special and perfect, then $\dis{A= \sum_{D \mid A} \sigma^*(D)}$.
 %and thus $\dis{\sum_{D \mid A} \sigma^*(D)}$ is a square.
\end{corollary}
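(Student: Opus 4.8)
The plan is to recognize the right-hand side as the value $(\sigma^* * z)(A)$ and then evaluate that value using the preceding lemma. First I would unwind the convolution: since $z(A/D) = 1$ for every divisor $D$ of $A$, one has $(\sigma^* * z)(A) = \sum_{D \mid A} \sigma^*(D) \, z(A/D) = \sum_{D \mid A} \sigma^*(D)$. So it suffices to prove $(\sigma^* * z)(A) = A$.

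The decisive use of the hypothesis is the word \emph{special}. Writing $A = S^2$ with $S$ square-free, every irreducible factor of $A$ occurs to exactly the second power, so $A = \prod_i P_i^2$ for distinct irreducibles $P_i$. Consequently only the even case $P^{2r}$ of Lemma \ref{convsigmastarz} is ever invoked, and moreover always with $r = 1$. That lemma then gives $(\sigma^* * z)(P_i^2) = \sigma(P_i^2)$ for each $i$.

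Next I would appeal to multiplicativity. By Lemma \ref{convmultip} the convolution $\sigma^* * z$ is multiplicative, and $\sigma$ is multiplicative as well, so
$(\sigma^* * z)(A) = \prod_i (\sigma^* * z)(P_i^2) = \prod_i \sigma(P_i^2) = \sigma(A)$.
Finally, perfection gives $\sigma(A) = A$, whence $(\sigma^* * z)(A) = A$ and the identity $A = \sum_{D \mid A} \sigma^*(D)$ follows.

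There is no serious obstacle here: the whole argument is a direct evaluation. The only point requiring care is that the agreement $(\sigma^* * z)(P^{2r}) = \sigma(P^{2r})$ is an equality of the \emph{convolution value} with $\sigma$ only on each prime power $P^2$, and it is the hypothesis \emph{special} (rather than merely \emph{square}) that guarantees every exponent appearing in $A$ is exactly $2$, so that we never leave the case $r = 1$ where this agreement, combined with multiplicativity, reassembles into $\sigma(A)$.
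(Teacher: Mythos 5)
Your proof is correct and follows essentially the same route as the paper: both identify the sum as $(\sigma^* * z)(A)$, reduce to prime powers via multiplicativity of the convolution, apply Lemma \ref{convsigmastarz}, and conclude with $\sigma(A)=A$. One small caveat: your closing remark overstates the role of \emph{special} --- Lemma \ref{convsigmastarz} gives $(\sigma^* * z)(P^{2r})=\sigma(P^{2r})$ for \emph{every} $r$, so it is enough that $A$ be a square (which every odd perfect polynomial is); specialness merely forces all exponents to equal $2$, and your restriction to the case $r=1$ is a convenience, not a necessity.
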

\begin{proof}
One has: $\sigma(A) = A$ and $A = S^2$, with $S$ square-free. Thus, \\
$\dis{\sum_{D \mid A} \sigma^*(D) = (\sigma^* * z)(A) = \sigma(A) = A}$.
\end{proof}
\subsection{The convolution $\sigma^* * {\rm{Id}}$}
\begin{lemma} \label{convsigmastarId} Let $P$ be irreducible and $r \in \N$. Then
$$\text{$(\sigma^* * {\rm{Id}})(P^{2r})=\sigma(P^{2r}) = (\sigma^* * {\rm{Id}})(P^{2r+1})$.}$$
\end{lemma}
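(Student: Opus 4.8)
The plan is to compute $(\sigma^* * {\rm{Id}})(P^m)$ directly from the definition of the convolution, exactly in the style of the proof of Lemma \ref{convsigmaId}, and then to split into the two parity cases $m = 2r$ and $m = 2r+1$. The key simplification comes from the shape of $\sigma^*$ on prime powers: the only unitary divisors of $P^\ell$ are $1$ and $P^\ell$, since $\gcd(P^j, P^{\ell-j}) = 1$ forces $j \in \{0, \ell\}$. Hence $\sigma^*(P^0) = 1$ and $\sigma^*(P^\ell) = 1 + P^\ell$ for $\ell \geq 1$, which makes every term of the convolution sum completely explicit.

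Writing $(\sigma^* * {\rm{Id}})(P^m) = \dis{\sum_{\ell=0}^{m} \sigma^*(P^\ell) \, P^{m-\ell}}$ and isolating the $\ell = 0$ term, I would substitute $\sigma^*(P^\ell) = 1 + P^\ell$ for $\ell \geq 1$ to obtain, for $m \geq 1$,
$$(\sigma^* * {\rm{Id}})(P^m) = P^m + \dis{\sum_{\ell=1}^{m}} \bigl(P^{m-\ell} + P^m\bigr) = P^m + \sigma(P^{m-1}) + m \, P^m,$$
where I have used $\dis{\sum_{\ell=1}^{m}} P^{m-\ell} = 1 + P + \cdots + P^{m-1} = \sigma(P^{m-1})$ (the case $m = 0$ is trivial, both sides equalling $1$).

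The remaining step is to reduce the coefficient $m$ modulo $2$ and to invoke the elementary recursion $\sigma(P^{k}) = \sigma(P^{k-1}) + P^k$. When $m = 2r$ is even, the term $m \, P^m$ vanishes over $\F_2$, leaving $P^{2r} + \sigma(P^{2r-1}) = \sigma(P^{2r})$. When $m = 2r+1$ is odd, $m \, P^m = P^{2r+1}$ cancels the isolated leading term $P^{2r+1}$, leaving $\sigma(P^{2r})$. Thus both expressions collapse to the common value $\sigma(P^{2r})$, which is precisely the asserted equality.

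There is no genuine obstacle here; the single point requiring care is the characteristic-$2$ bookkeeping of the coefficient $m$, whose parity is exactly what separates the two cases and, after the cancellation, forces the same answer $\sigma(P^{2r})$ in each. It is worth noting that this differs from the $\sigma * {\rm{Id}}$ computation of Lemma \ref{convsigmaId}, where the value is $(\sigma(P^r))^2$ rather than $\sigma(P^{2r})$, so replacing $\sigma$ by $\sigma^*$ produces a genuinely different convolution.
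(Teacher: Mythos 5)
Your proof is correct and follows essentially the same route as the paper: both compute $(\sigma^* * {\rm{Id}})(P^m)=\sum_{\ell=0}^{m}\sigma^*(P^\ell)P^{m-\ell}$ directly using $\sigma^*(P^\ell)=1+P^\ell$, arrive at the closed form $(m+1)P^m+\sigma(P^{m-1})$ (the paper writes the equivalent $1+(m-1)P^m+(P+\cdots+P^{m-1})$, identical over $\F_2$ since $m+1\equiv m-1 \pmod 2$), and finish by the parity of $m$. Your write-up simply fills in the computation the paper compresses into ``after computations.''
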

\begin{proof} %It suffices to consider the case where $A = P^m$, with $P$ irreducible. We claim that
%$(\sigma * \phi)(P^m) = 0$, if $m$ is odd and $(\sigma * \phi)(P^m) = P^m$, if $m$ is even.
For $m \geq 1$, one has after computations:
$$(\sigma^* * {\rm{Id}})(P^m) = \dis{\sum_{\ell=0}^{m} \sigma^*(P^{\ell}) \cdot P^{m-\ell} = 1+ (m-1)P^m+(P+\cdots+P^{m-1})}.$$
%$\begin{array}{lcl}
%(\sigma^* * {\rm{Id}})(P^m) &=& \dis{\sum_{\ell=0}^{m} \sigma^*(P^{\ell}) \cdot P^{m-\ell}}\\
%&=&\dis{P^m+ \sigma^*(P^m)+ \sum_{\ell=1}^{m-1}  \sigma^*(P^{\ell}) \cdot P^{m-\ell}}\\
%&=&\dis{1+  \sum_{\ell=1}^{m-1}  (1+P^{\ell}) P^{m-\ell}}\\
%&=&\dis{1+ (m-1)P^m+(P+P^2+\cdots+P^{m-1})}.
%\end{array}$\\
%\\
We get our result if $m$ is even (resp. if $m$ is odd).
\end{proof}
\begin{corollary} \label{corol-convsigmastarId}
If $A$ is odd and perfect, then $\dis{\sum_{D\mid A, D \not=1,A} \frac{\sigma^*(D)}{D} = \frac{\sigma^*(A)}{A}}$.
\end{corollary}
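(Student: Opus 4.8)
The plan is to mimic the structure of the preceding corollaries, exploiting the fact that an odd perfect polynomial is forced to be a square. First I would record the two structural facts about such an $A$: since $A$ is odd and perfect it is a square, say $A = S^2$ (Section 1), and perfectness gives $\sigma(A) = A$, equivalently $\sigma(A) + A = 0$ in $\F_2[x]$.

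The central computation is to evaluate $(\sigma^* * {\rm{Id}})(A)$. Because both $\sigma^* * {\rm{Id}}$ (by Lemma \ref{convmultip}) and $\sigma$ are multiplicative, and because $A = S^2 = \prod_i P_i^{2r_i}$ has only even exponents, I would apply Lemma \ref{convsigmastarId} factor by factor: $(\sigma^* * {\rm{Id}})(P_i^{2r_i}) = \sigma(P_i^{2r_i})$, whence $(\sigma^* * {\rm{Id}})(A) = \sigma(A) = A$.

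Next I would expand the convolution as $\dis{(\sigma^* * {\rm{Id}})(A) = \sum_{D \mid A} \sigma^*(D) \cdot \frac{A}{D}}$ and peel off the two extreme divisors: the term $D = 1$ contributes $A$ and the term $D = A$ contributes $\sigma^*(A)$. Combining with the previous paragraph and working over $\F_2$ (so that the two copies of $A$ cancel) yields $\dis{\sum_{D \mid A, D \not=1,A} \sigma^*(D) \cdot \frac{A}{D} = \sigma^*(A)}$. Dividing through by $A$ gives the claimed identity.

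I do not expect any genuine obstacle here: the argument is a direct transcription of the pattern used for $\sigma^* * z$ and $\sigma * {\rm{Id}}$. The only point requiring a little care is ensuring that the prime-power lemma is applied to $A$ as a square (all exponents even), so that the ``even'' case $(\sigma^* * {\rm{Id}})(P^{2r}) = \sigma(P^{2r})$ is the one that fires, the parallel ``odd'' case playing no role; one should also keep the characteristic-$2$ bookkeeping straight when cancelling, but this is routine.
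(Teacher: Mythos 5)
Your proposal is correct and follows exactly the paper's own argument: both use that an odd perfect $A$ is a square, invoke Lemma \ref{convsigmastarId} together with multiplicativity to get $(\sigma^* * {\rm{Id}})(A) = \sigma(A) = A$, expand the convolution, peel off the divisors $D = 1$ and $D = A$, cancel in characteristic $2$, and divide by $A$. No discrepancies to report.
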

\begin{proof}
First, $A$ must be a square. So, we get $$A=\dis{\sigma(A) =(\sigma^* * {\rm{Id}})(A)= \sigma^*(A) + A+ \sum_{D\mid A, D \not=1,A} \sigma^*(D) \cdot \frac{A}{D}}.$$
\end{proof}
\subsection{The convolution $\sigma^* * \phi$}
\begin{lemma} \label{convsigmastarphi} Let $P$ be irreducible and $r \in \N$. Then
$$\text{$(\sigma^* * \phi)(P^{2r})= \phi(P^{2r})$ and $(\sigma^* * \phi)(P^{2r+1})= 0$.}$$
\end{lemma}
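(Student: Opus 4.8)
The plan is to avoid a fresh computation of $\sum_{\ell=0}^m \sigma^*(P^\ell)\phi(P^{m-\ell})$ and instead reduce the statement to the already-proven Lemma~\ref{convsigmastarId}. The key observation is that $\phi$ itself can be rewritten as a convolution: applying the M\"obius inversion formula (Lemma~\ref{Mobius}) to the identity $\phi * z = {\rm{Id}}$ from Lemma~\ref{convexamples} gives $\phi = {\rm{Id}} * \mu$. Since convolution is associative (Lemma~\ref{convmultip}), I would then write
\[
\sigma^* * \phi = \sigma^* * ({\rm{Id}} * \mu) = (\sigma^* * {\rm{Id}}) * \mu,
\]
so that the problem becomes one of convolving the already-understood function $g := \sigma^* * {\rm{Id}}$ with $\mu$.

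The second step exploits the sparse support of $\mu$ on prime powers: $\mu(P^0)=\mu(P)=1$ and $\mu(P^j)=0$ for $j\geq 2$. Hence, for $m\geq 1$,
\[
(\sigma^* * \phi)(P^m) = (g * \mu)(P^m) = g(P^m) + g(P^{m-1}).
\]
Now I substitute the values from Lemma~\ref{convsigmastarId}, namely $g(P^{2r})=g(P^{2r+1})=\sigma(P^{2r})$. For odd $m=2r+1$ this immediately gives $g(P^{2r+1})+g(P^{2r})=\sigma(P^{2r})+\sigma(P^{2r})=0$. For even $m=2r$ with $r\geq 1$ it gives $g(P^{2r})+g(P^{2r-1})=\sigma(P^{2r})+\sigma(P^{2r-2})$ (using the parity shift $2r-1=2(r-1)+1$), and since $\sigma(P^{2r})=1+\cdots+P^{2r}$ and $\sigma(P^{2r-2})=1+\cdots+P^{2r-2}$, over $\F_2$ this sum collapses to $P^{2r-1}+P^{2r}=\phi(P^{2r})$. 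The remaining case $m=0$ (i.e. $r=0$ even) is handled directly: $(\sigma^* * \phi)(P^0)=\sigma^*(1)\phi(1)=1=\phi(P^0)$.

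Alternatively, one could proceed by the direct computation used in the preceding lemmas, expanding $\sum_{\ell=0}^m (1+P^\ell)\phi(P^{m-\ell})$, splitting off the $\ell=0$ and $\ell=m$ terms and telescoping the middle sum over $\F_2$; this yields the closed form $(\sigma^* * \phi)(P^m)=(m-1)\phi(P^m)$ with the coefficient read modulo $2$, from which the even/odd dichotomy follows at once. I expect the main obstacle to be purely bookkeeping rather than conceptual: in the direct route the delicate point is tracking the parity of $m-1$ and the pairwise cancellations of repeated terms over $\F_2$, whereas in the convolution route everything hinges on correctly identifying $\phi = {\rm{Id}}*\mu$ and on the parity shift that lets me feed Lemma~\ref{convsigmastarId}. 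The convolution route is shorter and, to my mind, the cleaner one to present.
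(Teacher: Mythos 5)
Your primary argument is correct and takes a genuinely different route from the paper. The paper proves the lemma by exactly the direct computation you relegate to an alternative: it expands $(\sigma^* * \phi)(P^m) = \sum_{\ell=0}^{m} \sigma^*(P^{\ell})\,\phi(P^{m-\ell})$ and records the closed form $(m-1)(P^m+P^{m-1})$, with the coefficient $m-1$ read modulo $2$, from which the even/odd dichotomy is immediate. Your route instead factors $\phi = {\rm{Id}} * \mu$ (which the paper itself records in Examples~\ref{convexamples2}, and which follows from Lemma~\ref{Mobius} applied to $\phi * z = {\rm{Id}}$), uses associativity from Lemma~\ref{convmultip} to write $\sigma^* * \phi = (\sigma^* * {\rm{Id}}) * \mu$, and then exploits the sparse support of $\mu$ on prime powers together with Lemma~\ref{convsigmastarId}: for $m \geq 1$ the convolution collapses to $g(P^m)+g(P^{m-1})$ with $g = \sigma^* * {\rm{Id}}$, giving $\sigma(P^{2r})+\sigma(P^{2r})=0$ for odd $m=2r+1$, and $\sigma(P^{2r})+\sigma(P^{2r-2}) = P^{2r-1}+P^{2r} = \phi(P^{2r})$ for even $m=2r$ with $r\geq 1$ (via the parity shift $2r-1 = 2(r-1)+1$), with $m=0$ checked directly. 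Every ingredient you invoke is available at this point in the paper, and your bookkeeping is right. What your approach buys is conceptual economy: it replaces a sum whose evaluation the paper asserts without detail by pure algebra in the convolution ring plus an already-proven lemma. What the paper's computation buys is self-containedness — it does not depend on Lemma~\ref{convsigmastarId} — and a single closed formula valid for all $m$ at once.
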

\begin{proof} %It suffices to consider the case where $A = P^m$, with $P$ irreducible. We claim that
%$(\sigma * \phi)(P^m) = 0$, if $m$ is odd and $(\sigma * \phi)(P^m) = P^m$, if $m$ is even.
One has:
$(\sigma^* * \phi)(P^m) = \dis{\sum_{\ell=0}^{m} \sigma^*(P^{\ell}) \cdot \phi(P^{m-\ell}) = (m-1)(P^m + P^{m-1}).}$
%$\begin{array}{lcl}
%(\sigma^* * \phi)(P^m) &=& \dis{\sum_{\ell=0}^{m} \sigma^*(P^{\ell}) \cdot \phi(P^{m-\ell})}\\
%&=&\dis{\phi(P^m)+ \sigma^*(P^m)+ \sum_{\ell=1}^{m-1} (1+P^{\ell}) \cdot (P^{m-\ell}+P^{m-\ell-1})}\\
%&=&\dis{P^m + P^{m-1}+ 1+P^m +  \sum_{\ell=1}^{m-1}  (P^{m-\ell}+P^{m-\ell-1}+P^m+P^{m-1})}\\
%&=&\dis{1+P^{m-1} +(m-1)P^m + (m-1)P^{m-1}+  \sum_{\ell=1}^{m-1}  (P^{m-\ell} +P^{m-\ell-1})}\\
%&=&(m-1)(P^m + P^{m-1}).
%\end{array}$\\
\end{proof}
\begin{corollary} \label{corol-convsigmastarphi}
If $A$ is odd and perfect, then $\dis{\sigma^*(A)=\sum_{D\mid A, D \not=1,A} \sigma^*(D) \cdot \phi(\frac{A}{D})}$.
\end{corollary}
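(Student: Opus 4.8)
The plan is to follow the same template as the preceding corollaries, this time using Lemma \ref{convsigmastarphi}. First, recall that an odd perfect polynomial $A$ must be a square, say $A = S^2$ (\cite{Gall-Rahav4}); hence every irreducible factor of $A$ occurs with an even exponent, so only the even-exponent case $(\sigma^* * \phi)(P^{2r}) = \phi(P^{2r})$ of the lemma is relevant. Since $\sigma^* * \phi$ is multiplicative (Lemma \ref{convmultip}), its value at $A$ is the product of the values $(\sigma^* * \phi)(P^{2r}) = \phi(P^{2r})$ taken over the prime powers $P^{2r}$ exactly dividing $A$; as $\phi$ is itself multiplicative, these recombine to give $(\sigma^* * \phi)(A) = \phi(A)$.

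Next I would expand the left-hand side as the divisor sum $\sum_{D \mid A} \sigma^*(D)\,\phi(A/D)$ and peel off its two extreme terms. The divisor $D = A$ contributes $\sigma^*(A)\,\phi(1) = \sigma^*(A)$, using $\phi(1) = 1$ from Lemma \ref{propertmultip}, while $D = 1$ contributes $\sigma^*(1)\,\phi(A) = \phi(A)$. Combined with the identity just obtained, this yields
$$\phi(A) = \sigma^*(A) + \phi(A) + \sum_{D \mid A,\; D \neq 1, A} \sigma^*(D)\,\phi(A/D).$$

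Finally, working over $\F_2$ the common term $\phi(A)$ cancels from both sides, leaving exactly
$$\sigma^*(A) = \sum_{D \mid A,\; D \neq 1, A} \sigma^*(D)\,\phi\!\left(\frac{A}{D}\right),$$
which is the claim. No step is a genuine obstacle: the whole argument rests on the value of $\sigma^* * \phi$ at a square, furnished by Lemma \ref{convsigmastarphi}, and the only care needed is the bookkeeping of the two boundary divisors $D = 1$ and $D = A$, so that the surviving sum ranges precisely over the nontrivial divisors of $A$.
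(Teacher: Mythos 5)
Your proof is correct and is precisely the intended argument: the paper states Corollary \ref{corol-convsigmastarphi} without proof, and your derivation follows the same template as the paper's proofs of the neighboring results (Corollaries \ref{corol-convsigmaphi} and \ref{corol-convsigmastarsigma}), namely using that an odd perfect $A$ is a square so that Lemma \ref{convsigmastarphi} and multiplicativity give $(\sigma^* * \phi)(A) = \phi(A)$, then peeling off the boundary divisors $D=1$ and $D=A$ and cancelling in characteristic $2$.
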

%\begin{corollary}
%If $A$ is unitary perfect, then
%$A=\dis{\sum_{D\mid A, D \not=A} \sigma^*(D) \phi(A/D)}.$
%\end{corollary}
%\begin{proof}
%???\\
%???\\
%We get $$\dis{A =(\sigma * \phi)(A)= \sigma(A) + \sum_{D\mid A, D \not=A} \sigma(D) \phi(A/D) =
%A + \sum_{D\mid A, D \not=A} \sigma(D) \phi(A/D)}.$$
%\end{proof}
\subsection{The convolution $\sigma^* * \sigma$}
\begin{lemma} \label{convsigmastarsigma}Let $P$ be irreducible and $r \in \N$. Then
$$\text{$(\sigma^* * \sigma)(P^{2r})=
\sigma(P^{2r})$ and $(\sigma^* * \sigma)(P^{2r+1})=0$}.$$
\end{lemma}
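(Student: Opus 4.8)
The plan is to reduce this new convolution to one that has already been computed, rather than expanding everything from scratch. Writing out the definition, one has $\dis{(\sigma^* * \sigma)(P^m) = \sum_{\ell=0}^{m} \sigma^*(P^\ell)\,\sigma(P^{m-\ell})}$, where $\sigma^*(P^0)=1$ and $\sigma^*(P^\ell)=1+P^\ell$ for $\ell \geq 1$. A direct attack would split off the $1$ and the $P^\ell$ contributions, but it is cleaner to exploit associativity of the convolution (Lemma \ref{convmultip}).

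The key observation is that $\sigma = {\rm{Id}} * z$ (Lemma \ref{convexamples}), so that $\sigma^* * \sigma = \sigma^* * ({\rm{Id}} * z) = (\sigma^* * {\rm{Id}}) * z$. Now I can feed in Lemma \ref{convsigmastarId}, which already gives the values of $g = \sigma^* * {\rm{Id}}$ at prime powers, namely $g(P^{2r}) = g(P^{2r+1}) = \sigma(P^{2r})$. Since convolving with $z$ simply sums over all divisors, $\dis{(\sigma^* * \sigma)(P^m) = (g * z)(P^m) = \sum_{\ell=0}^{m} g(P^\ell)}$. The ``staircase'' shape of $g$ then does all the work: over $\F_2$ each consecutive pair satisfies $g(P^{2k}) + g(P^{2k+1}) = \sigma(P^{2k}) + \sigma(P^{2k}) = 0$. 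For $m = 2r+1$ the whole sum partitions into such pairs and vanishes; for $m = 2r$ the same pairing annihilates everything except the final term $g(P^{2r}) = \sigma(P^{2r})$, yielding the two claimed values.

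As a fallback, one could argue by the direct split. After separating off the $1$'s, the first part is exactly $(\sigma * z)(P^m)$, known from Lemma \ref{convsigmaz}, and the remaining sum $\dis{\sum_{\ell=1}^{m} P^\ell\,\sigma(P^{m-\ell})}$ rearranges to $\dis{\sum_{k=1}^{m} k\,P^k}$, which over $\F_2$ collapses to the sum of the $P^k$ with $k$ odd. Combining these via the Frobenius identity $(\sigma(P^r))^2 = \dis{\sum_{j=0}^{r} P^{2j}}$ recovers the same answer. I expect the only real subtlety to be bookkeeping: tracking which indices survive reduction mod $2$ and pairing terms so that the telescoping is transparent. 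The associativity route sidesteps most of this, so I would present it first and relegate the direct computation to a remark.
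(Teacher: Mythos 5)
Your proof is correct, and its main route is genuinely different from the paper's. The paper disposes of the lemma in one line of direct computation: expanding $(\sigma^* * \sigma)(P^m) = \sum_{\ell=0}^{m}\sigma^*(P^{\ell})\,\sigma(P^{m-\ell})$ and simplifying over $\F_2$ to the closed formula $(m-1)\,\sigma(P^m)$, so that the parity of $m-1$ simultaneously yields $\sigma(P^{2r})$ for $m=2r$ and $0$ for $m=2r+1$. You instead work structurally: $\sigma = {\rm{Id}} * z$ (Lemma \ref{convexamples}), associativity (Lemma \ref{convmultip}) gives $\sigma^* * \sigma = (\sigma^* * {\rm{Id}}) * z$, and the known staircase values of $\sigma^* * {\rm{Id}}$ (Lemma \ref{convsigmastarId}) are then summed, with consecutive terms cancelling in pairs over $\F_2$. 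This is valid, and there is no circularity, since Lemma \ref{convsigmastarId} (and Lemma \ref{convsigmaz}, used in your fallback) are proved in the paper independently of the present lemma. What your route buys is reuse of prior results and a transparent reason for the vanishing in the odd case (the sum partitions into cancelling pairs); what the paper's route buys is brevity and a single uniform formula covering both parities. Your fallback computation (splitting $\sigma^*(P^{\ell}) = 1 + P^{\ell}$, recognizing $(\sigma * z)(P^m)$ and the weighted sum $\sum_{k} k\,P^k$, then combining via the Frobenius identity) is also sound and is closer in spirit to the paper's own argument, of which it is essentially a fully detailed version.
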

\begin{proof}
One has:
$(\sigma^* * \sigma)(P^m) = \dis{\sum_{\ell=0}^{m} \sigma^*(P^{\ell}) \cdot \sigma(P^{m-\ell}) = (m-1) \sigma(P^m).}$
%$\begin{array}{lcl}
%(\sigma^* * \sigma)(P^m) &=& \dis{\sum_{\ell=0}^{m} \sigma^*(P^{\ell}) \cdot \sigma(P^{m-\ell})}\\
%&=&\dis{\sigma(P^m)+ \sigma^*(P^m)+ \sum_{\ell=1}^{m-1} (1+P^{\ell}) \cdot \sigma(P^{m-\ell})}\\
%&=&\dis{P+\cdots + P^{m-1}+ \frac{1}{1+P} \sum_{\ell=1}^{m-1}   (1+P^{\ell}) (1+P^{m-\ell+1})}\\
%&=&\dis{\frac{P}{1+P} \cdot (1+P^{m-1}) +  \frac{1}{1+P} \sum_{\ell=1}^{m-1}   (1+P^{\ell}) (1+P^{m-\ell+1})}\\
%&=&\dis{\frac{1}{1+P} \cdot [P+P^m+ \sum_{\ell=1}^{m-1} (1+P^{\ell} +P^{m-\ell+1} + P^{m+1})]}\\
%&=&\dis{\frac{1}{1+P} \cdot (m-1)(1+P^{m+1})}\\
%&=&(m-1) \sigma(P^m).
%\end{array}$\\
\end{proof}
\begin{corollary} \label{corol-convsigmastarsigma}
If $A$ is odd and perfect, then $\dis{\sigma^*(A)=\sum_{D\mid A, D \not=1,A} \sigma^*(D) \cdot \sigma(\frac{A}{D})}$.
\end{corollary}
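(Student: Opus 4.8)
The plan is to follow the template already used for the preceding corollaries (for instance, Corollaries \ref{corol-convsigmastarz} and \ref{corol-convsigmastarphi}). First I would invoke the structural fact recalled in Section 1 that an odd perfect polynomial $A$ is necessarily a square, say $A = S^2$; consequently every irreducible factor of $A$ occurs to an even power in its factorization.

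Next, since $\sigma^* * \sigma$ is multiplicative by Lemma \ref{convmultip}, its value at $A$ is the product of its values at the prime-power components of $A$. Each such component is of the form $P^{2r}$, so Lemma \ref{convsigmastarsigma} gives $(\sigma^* * \sigma)(P^{2r}) = \sigma(P^{2r})$ at every factor. Taking the product over all irreducible divisors of $A$ and using the multiplicativity of $\sigma$ yields $(\sigma^* * \sigma)(A) = \sigma(A)$. Because $A$ is perfect, $\sigma(A) = A$, and hence $(\sigma^* * \sigma)(A) = A$.

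I would then expand the convolution directly as $(\sigma^* * \sigma)(A) = \dis{\sum_{D \mid A}} \sigma^*(D)\,\sigma(A/D)$ and isolate the two extreme terms. The term $D = 1$ contributes $\sigma^*(1)\,\sigma(A) = \sigma(A) = A$, while the term $D = A$ contributes $\sigma^*(A)\,\sigma(1) = \sigma^*(A)$. Substituting into $(\sigma^* * \sigma)(A) = A$ gives the relation $A = A + \sigma^*(A) + \dis{\sum_{D \mid A,\,D \not= 1, A}} \sigma^*(D)\,\sigma(A/D)$, and working in characteristic $2$ the two copies of $A$ cancel, leaving exactly $\sigma^*(A) = \dis{\sum_{D \mid A,\,D \not= 1, A}} \sigma^*(D)\,\sigma(A/D)$.

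The argument is entirely routine, and there is no real obstacle: the only point that requires attention is the role of characteristic $2$, which is what makes the $D=1$ term cancel cleanly against the left-hand side, exactly as in the earlier corollaries of this section.
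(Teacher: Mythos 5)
Your proof is correct and follows essentially the same route as the paper: use the fact that an odd perfect $A$ is a square to get $(\sigma^* * \sigma)(A) = \sigma(A) = A$ via Lemma \ref{convsigmastarsigma} and multiplicativity, then expand the convolution and cancel the extreme terms in characteristic $2$. The paper's proof is just a terser version of exactly this argument, leaving the expansion and cancellation implicit.
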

\begin{proof}
The polynomial $A$ is a square. Thus, $A = \sigma(A) = (\sigma^* * \sigma)(A)$.
\end{proof}
\section{The Dirichlet Inverse}
\begin{lemma} \label{definvconv}
Let $f$ be a multiplicative function. Then,
there exists a unique multiplicative function $f^{\rm{inv}}$
$($called the Dirichlet inverse of $f)$ such that $f*f^{\rm{inv}} = \delta$.
\end{lemma}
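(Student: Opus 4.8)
The plan is to reduce everything to the behaviour on prime powers. Since any multiplicative function is completely determined by its values $f(P^r)$ for $P$ irreducible and $r \in \N^*$ (Lemma \ref{propertmultip}), and since the convolution of two multiplicative functions is again multiplicative (Lemma \ref{convmultip}), it suffices to build a multiplicative $g$ whose values on prime powers force $(f*g)(P^n) = \delta(P^n)$; the equality $f*g = \delta$ then propagates to all polynomials because both sides are multiplicative and agree on every prime power.

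For existence, I would define $g$ recursively on prime powers. Put $g(1) = 1$, as any multiplicative function must satisfy. For a fixed irreducible $P$ and $n \geq 1$, the desired identity reads
\[
0 = \delta(P^n) = (f*g)(P^n) = \sum_{k=0}^{n} f(P^k)\, g(P^{n-k}).
\]
Isolating the term $k = 0$ and using $f(1) = 1$ gives $g(P^n) = \sum_{k=1}^{n} f(P^k)\, g(P^{n-k})$ (the sign disappears in characteristic $2$). This expresses $g(P^n)$ as a polynomial in the already-known quantities $f(P), \dots, f(P^n)$ and $g(1), \dots, g(P^{n-1})$, so $g(P^n) \in \F_2[x]$ is well defined by induction on $n$. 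I would then extend $g$ to all nonzero polynomials by multiplicativity (legitimate by Lemma \ref{propertmultip}) and verify $f*g = \delta$: at $1$ one has $(f*g)(1) = f(1)g(1) = 1 = \delta(1)$, while at each $P^n$ with $n \geq 1$ the defining recursion gives $(f*g)(P^n) = 0 = \delta(P^n)$. As both $f*g$ and $\delta$ are multiplicative and agree on all prime powers, they coincide.

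For uniqueness, I would avoid rechecking the recursion and argue purely algebraically. Suppose $g$ and $h$ are both multiplicative with $f*g = \delta$ and $f*h = \delta$. Using commutativity, associativity, and the fact that $\delta$ is the convolution identity (all from Lemma \ref{convmultip}),
\[
g = g*\delta = g*(f*h) = (g*f)*h = (f*g)*h = \delta*h = h,
\]
so the inverse is unique.

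The only genuinely delicate point is the one that makes the whole construction work: solving the recursion for $g(P^n)$ amounts to dividing by $f(P^0) = f(1)$, and this is harmless precisely because $f(1) = 1$ is the unique unit of $\F_2$, so no actual division in $\F_2[x]$ is needed and the computed values remain polynomials. Everything else is bookkeeping justified by Lemmas \ref{propertmultip} and \ref{convmultip}.
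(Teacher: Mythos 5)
Your proposal is correct and follows essentially the same route as the paper: the same recursion $g(P^n)=\sum_{k=1}^{n} f(P^k)\,g(P^{n-k})$ on prime powers (identical to the paper's $f^{\rm{inv}}(P^r)=\sum_{\ell=0}^{r-1} f^{\rm{inv}}(P^{\ell})f(P^{r-\ell})$ in characteristic $2$), extended multiplicatively via Lemma \ref{propertmultip}. You go slightly further than the paper by explicitly verifying $f*g=\delta$ and by giving the algebraic uniqueness argument $g=g*(f*h)=(f*g)*h=h$, steps the paper leaves implicit; these are welcome completions rather than a different approach.
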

\begin{proof}
Set $f^{\rm{inv}}(1) = f(1)=1$. If $A=P^r$, with $P$ irreducible, we recursively define $f^{\rm{inv}}(P^r)$ by putting:
$f^{\rm{inv}}(P)= f(P), \
f^{\rm{inv}}(P^r)=\dis{\sum_{\ell = 0}^{r-1} f^{\rm{inv}}(P^{\ell})f(P^{r-\ell})}.
%& = &\dis{\frac{-1}{f(1)} (f(P^r)g(1) +
%f(P^{r-1})g(P)+\cdots+f(P)g(P^{r-1}))}.
$\\
If $A = P_1^{r_1} \cdots
P_k^{r_k}$, with each $P_i$ irreducible and $P_i \not= P_j$, if $i \not= j$, then we define $f^{\rm{inv}}(A)$ as  $f^{\rm{inv}}(P_1^{r_1}) \cdots f^{\rm{inv}}(P_k^{r_k})$.
%We get our result by direct computations.
\end{proof}
%Given
%an arithmetic function $f$, its Dirichlet inverse $g = f^{-1}$ may
%be calculated recursively: the value of $g(A)$ is in terms of $g(D)$
%for $D \mid A$, from the definition of
%Dirichlet inverse.\\
%Since the only division is by $f(1)$ this shows that $f$ has a
%Dirichlet inverse if and only if $f(1) \not= 0.$
\begin{lemma} \label{propert-invconv}
Let $f$ and $g$ be multiplicative. Then
$$\text{$(f^{\rm{inv}})^{{\rm{inv}}} = f$ and  $(f*g)^{\rm{inv}} = f^{\rm{inv}} * g^{\rm{inv}}$.}$$
\end{lemma}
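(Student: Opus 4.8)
The plan is to exploit the ring-like structure of the Dirichlet convolution that has already been assembled, rather than to compute anything at prime powers. From Lemma \ref{convmultip} I have commutativity, associativity, and the fact that $\delta$ is a two-sided identity ($f*\delta = f$); from Lemma \ref{definvconv} I have both the \emph{existence} and the \emph{uniqueness} of a multiplicative Dirichlet inverse. Both claimed identities will then follow purely formally, by producing an explicit candidate inverse and invoking uniqueness.

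For the first identity $(f^{\rm{inv}})^{\rm{inv}} = f$, I would note that $f^{\rm{inv}}$ is itself multiplicative (Lemma \ref{definvconv}), so it possesses a unique Dirichlet inverse. By the defining relation, $f * f^{\rm{inv}} = \delta$, and by commutativity $f^{\rm{inv}} * f = \delta$. Thus $f$ is a multiplicative function whose convolution with $f^{\rm{inv}}$ equals $\delta$; uniqueness then forces $f = (f^{\rm{inv}})^{\rm{inv}}$.

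For the second identity, I would set $h = f^{\rm{inv}} * g^{\rm{inv}}$, which is multiplicative by Lemma \ref{convmultip}, so that $f*g$ (also multiplicative) has a well-defined unique inverse and it suffices to verify $(f*g)*h = \delta$. Regrouping with associativity and commutativity gives
$$(f*g)*(f^{\rm{inv}}*g^{\rm{inv}}) = (f*f^{\rm{inv}})*(g*g^{\rm{inv}}) = \delta * \delta = \delta,$$
where $\delta*\delta = \delta$ because $\delta$ is the identity. Hence $h$ satisfies the defining property of $(f*g)^{\rm{inv}}$, and uniqueness yields $(f*g)^{\rm{inv}} = f^{\rm{inv}} * g^{\rm{inv}}$.

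There is no genuine computational obstacle here: the whole argument is a formal manipulation inside the commutative monoid of multiplicative functions under convolution. The only point requiring a little care—and the one I expect to be the crux of writing it cleanly—is to confirm that the candidate $f^{\rm{inv}} * g^{\rm{inv}}$ is multiplicative \emph{before} appealing to the uniqueness clause of Lemma \ref{definvconv}, since that clause is stated only for multiplicative functions; this is exactly what Lemma \ref{convmultip} supplies.
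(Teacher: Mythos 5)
Your proof is correct. The paper actually states Lemma \ref{propert-invconv} with no proof at all (it is the one lemma in this section left entirely to the reader), so there is no argument to compare against; your formal derivation supplies exactly what the authors left implicit. The route you take is the natural one: commutativity, associativity and the identity $f*\delta=f$ from Lemma \ref{convmultip}, combined with the existence-and-uniqueness statement of Lemma \ref{definvconv}, reduce both identities to exhibiting a multiplicative candidate inverse and invoking uniqueness. You are also right to flag the one genuinely delicate point, namely that the uniqueness clause of Lemma \ref{definvconv} is quantified only over \emph{multiplicative} functions, so the multiplicativity of $f^{\rm{inv}}*g^{\rm{inv}}$ (guaranteed by Lemma \ref{convmultip}) must be checked before uniqueness can be applied; your write-up handles this correctly.
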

%\subsubsection{The Dirichlet inverses of $\mu$ and of $z$} \label{invphi}
%We denote by $\mu^{\rm{inv}}$ (resp. $z^{\rm{inv}}$) the Dirichlet inverse of $\mu$ (resp. of $z$).
\begin{examples} \label{convexamples2} {\emph{From Lemma \ref{convexamples}, we immediately get}}\\
$$\text{$\mu^{\rm{inv}}=z, \ z^{\rm{inv}}= \mu$, $\phi^{\rm{inv}} * {\rm{Id}}=z$, ${\rm{Id}}^{\rm{inv}} = \phi^{\rm{inv}} * \mu$, ${\rm{Id}}* \mu = \phi$.}$$
%\begin{corollary}
%If $A$ is odd and perfect, then
%$\dis{\sum_{D \mid A,\ D \not=A,1, \ \frac{A}{D} \text{ square-free}} \sigma(D) = 0}$.
%\end{corollary}
%\begin{proof}
%It follows from the equality: $\sigma * \mu = {\rm{Id}}$.
%\end{proof}
%ii) If $S = P_1^{m_1} \cdots  P_r^{m_r}$, with $P_i$ irreducible and $P_i \not= P_j$ if $i \not= j$, then %$\phi^{\rm{inv}}(S)=(1+P_1)\cdots (1+P_r)$.
\end{examples}
\subsection{The Dirichlet inverse of ${\rm{Id}}$} \label{invId}
We denote by ${\rm{Id}}^{\rm{inv}}$ the Dirichlet inverse of ${\rm{Id}}$.
\begin{lemma} \label{express-invsigma}
If $P$ is irreducible, then ${\rm{Id}}^{\rm{inv}}(P)=P$ and ${\rm{Id}}^{\rm{inv}}(P^m)=0$, for any $m \geq 2$.
%ii) If $S = P_1^{m_1} \cdots  P_r^{m_r}$, with $P_i$ irreducible and $P_i \not= P_j$ if $i \not= j$, then %$\phi^{\rm{inv}}(S)=(1+P_1)\cdots (1+P_r)$.
\end{lemma}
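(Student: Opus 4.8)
The plan is to read off the value directly from the recursive definition of the Dirichlet inverse given in Lemma \ref{definvconv}. Since ${\rm{Id}}$ is totally multiplicative with ${\rm{Id}}(P^k)=P^k$, that recursion specializes to ${\rm{Id}}^{\rm{inv}}(P)={\rm{Id}}(P)=P$ and, for every $m\geq 1$, to the identity ${\rm{Id}}^{\rm{inv}}(P^m)=\sum_{\ell=0}^{m-1} {\rm{Id}}^{\rm{inv}}(P^{\ell})\, P^{m-\ell}$. The first assertion of the lemma is then immediate, and the whole task reduces to showing that the right-hand sum vanishes for $m\geq 2$.

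First I would treat the base case $m=2$ by explicit evaluation: ${\rm{Id}}^{\rm{inv}}(P^2)={\rm{Id}}^{\rm{inv}}(1)\,P^2+{\rm{Id}}^{\rm{inv}}(P)\,P=P^2+P\cdot P=P^2+P^2=0$, the vanishing being exactly the characteristic-two cancellation. I would then establish ${\rm{Id}}^{\rm{inv}}(P^m)=0$ for all $m\geq 2$ by strong induction on $m$. Assuming ${\rm{Id}}^{\rm{inv}}(P^{\ell})=0$ for all $2\leq \ell\leq m-1$, every summand in $\sum_{\ell=0}^{m-1} {\rm{Id}}^{\rm{inv}}(P^{\ell})\,P^{m-\ell}$ with index $\ell\geq 2$ drops out, so only the contributions from $\ell=0$ and $\ell=1$ survive. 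These give ${\rm{Id}}^{\rm{inv}}(1)\,P^m+{\rm{Id}}^{\rm{inv}}(P)\,P^{m-1}=P^m+P\cdot P^{m-1}=P^m+P^m=0$, which closes the induction.

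An equivalent route, which I would mention as an alternative, is to \emph{guess} the multiplicative function $g$ determined by $g(1)=1$, $g(P)=P$, and $g(P^m)=0$ for $m\geq 2$, and then verify $g*{\rm{Id}}=\delta$ on prime powers: for $r\geq 1$ only the terms $\ell=0,1$ contribute to $(g*{\rm{Id}})(P^r)=\sum_{\ell=0}^{r} g(P^{\ell})\,P^{r-\ell}$, yielding $P^r+P^r=0=\delta(P^r)$. The uniqueness clause of Lemma \ref{definvconv} would then force $g={\rm{Id}}^{\rm{inv}}$. Both arguments rest on the single observation that the two nonzero terms coincide in $\F_2[x]$ and hence cancel; because of this the proof is entirely routine, and there is no genuine obstacle to anticipate beyond keeping the indexing of the recursion straight.
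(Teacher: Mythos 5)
Your proof is correct and matches the paper's argument essentially verbatim: the paper also reads off ${\rm{Id}}^{\rm{inv}}(P)={\rm{Id}}(P)=P$ from the recursion of Lemma \ref{definvconv}, checks the base case ${\rm{Id}}^{\rm{inv}}(P^2)=P^2+P\cdot P=0$, and then runs the same strong induction in which only the $\ell=0,1$ terms survive and cancel in characteristic $2$. Your alternative (guess the candidate inverse and invoke the uniqueness clause of Lemma \ref{definvconv}) is a valid reformulation but rests on the identical computation, so it adds nothing essentially new.
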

\begin{proof}
By induction on $m$. First, ${\rm{Id}}^{\rm{inv}}(P)= {\rm{Id}}(P)=P$ and ${\rm{Id}}^{\rm{inv}}(P^{2}) = {\rm{Id}}(P^2) + {\rm{Id}}^{\rm{inv}}(P) \cdot {\rm{Id}}(P) = P^2 + P \cdot P = 0$. Suppose that ${\rm{Id}}^{\rm{inv}}(P^{\ell})=0$, for $2 \leq \ell \leq m-1$.  We get:
$$\begin{array}{lcl}
{\rm{Id}}^{\rm{inv}}(P^{m})&=&\dis{{\rm{Id}}(P^{m}) + \sum_{\ell =1}^{m-1}
{\rm{Id}}^{\rm{inv}}(P^{\ell}) \cdot {\rm{Id}}(P^{m-\ell})}\\
&=& \dis{P^{m}+ P \cdot P^{m-1} + \sum_{\ell =2}^{m-1} {\rm{Id}}^{\rm{inv}}(P^{\ell}) \cdot {\rm{Id}}(P^{m-\ell})}\\
&=& P^m+P^{m}+0.
\end{array}$$
%ii) is true because  $\phi^{\rm{inv}}$ is multiplicative.
\end{proof}
\subsubsection{The convolution ${\rm{Id}}^{\rm{inv}} * z$}
\begin{lemma} Let $P$ be irreducible and $m \geq 1$. Then,
$({\rm{Id}}^{\rm{inv}} * z)(P^{m})
=  1+P$.
%$$\text{$({\rm{Id}}^{\rm{inv}} * z)(P)=({\rm{Id}}^{\rm{inv}} * z)(P^2)=P+1$ and %$({\rm{Id}}^{\rm{inv}} * z)(P^{m})
%=  0$ if $m \geq 3$.}$$
\end{lemma}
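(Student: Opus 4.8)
The plan is to compute $({\rm{Id}}^{\rm{inv}} * z)(P^m)$ directly from the definition of the convolution, using the explicit formula for ${\rm{Id}}^{\rm{inv}}$ established in Lemma \ref{express-invsigma}. Since $z(P^k)=1$ for every $k\geq 0$, the convolution collapses to a sum of the values of ${\rm{Id}}^{\rm{inv}}$ alone:
$$({\rm{Id}}^{\rm{inv}} * z)(P^m)=\sum_{\ell=0}^{m} {\rm{Id}}^{\rm{inv}}(P^{\ell})\cdot z(P^{m-\ell})=\sum_{\ell=0}^{m}{\rm{Id}}^{\rm{inv}}(P^{\ell}).$$

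First I would record the three relevant values from Lemma \ref{express-invsigma}: ${\rm{Id}}^{\rm{inv}}(P^0)={\rm{Id}}^{\rm{inv}}(1)=1$, ${\rm{Id}}^{\rm{inv}}(P)=P$, and ${\rm{Id}}^{\rm{inv}}(P^\ell)=0$ for every $\ell\geq 2$. Substituting these into the telescoped sum, all terms with $\ell\geq 2$ vanish, so only the $\ell=0$ and $\ell=1$ contributions survive, giving $1+P$. This holds for every $m\geq 1$, since increasing $m$ only appends further zero terms to the sum.

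I do not expect any genuine obstacle here: the whole computation is immediate once Lemma \ref{express-invsigma} is in hand, and the only point requiring a moment's care is to confirm that the hypothesis $m\geq 1$ guarantees the index $\ell=1$ actually appears in the range of summation (so that the $P$-term is present), while the flatness of $z$ ensures the $z$-factor never interferes. Thus the result $({\rm{Id}}^{\rm{inv}} * z)(P^m)=1+P$ follows in one line.
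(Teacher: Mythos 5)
Your proof is correct and is essentially the paper's argument: both evaluate the convolution sum $\sum_{\ell} {\rm{Id}}^{\rm{inv}}(P^{\ell})\, z(P^{m-\ell})$ directly using ${\rm{Id}}^{\rm{inv}}(1)=1$, ${\rm{Id}}^{\rm{inv}}(P)=P$ and ${\rm{Id}}^{\rm{inv}}(P^{\ell})=0$ for $\ell\geq 2$, so that only the terms $\ell=0,1$ survive. The paper nominally phrases this as an induction on $m$, but no induction hypothesis is actually used there, so your one-line direct computation is the same proof (if anything, stated more cleanly — though note the sum ``collapses'' rather than ``telescopes'').
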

\begin{proof} %It suffices to consider the case where $A = P^m$, with $P$ irreducible. We claim that
%$(\sigma * \phi)(P^m) = 0$, if $m$ is odd and $(\sigma * \phi)(P^m) = P^m$, if $m$ is even.
By induction on $m$: $({\rm{Id}}^{\rm{inv}} * z)(P)= {\rm{Id}}^{\rm{inv}}(P) + z(P) = P+1$.\\
For $m\geq 2$:\\
$\begin{array}{lcl}
({\rm{Id}}^{\rm{inv}} * z)(P^m)&=&\dis{{\rm{Id}}^{\rm{inv}}(P^m) + z(P^m) + \sum_{k=1}^{m-1} {\rm{Id}}^{\rm{inv}}(P^{m-k}) \cdot z(P^k)}\\
&=& 0+1+0+ {\rm{Id}}^{\rm{inv}}(P) \cdot z(P^{m-1}).
\end{array}$
\end{proof}
%\begin{corollary}~\\
%??? pas int\'eressant ???
%If $A$ is a special perfect, then $\dis{\sum_{D \mid A,D\not=1,A} \sigma(D) = A+1+\sigma^*(A)}$ so %that $\dis{\sum_{D \mid A,D\not=1,A} \sigma(D)}$ is a square.
%\end{corollary}
%\begin{proof}
%One has: $\sigma(A) = A$ and $A = S^2$, with $S$ square-free. Thus, \\
%$\dis{A+1 + \sum_{D \mid A,D\not=1,A} \sigma(D) = (\sigma * z)(A) = \sigma^*(A)}$.
%\end{proof}
\subsubsection{The convolution $\sigma * {\rm{Id}}^{\rm{inv}}$}
\begin{lemma} One has $\sigma * {\rm{Id}}^{\rm{inv}} = z$ so that
$(\sigma * {\rm{Id}}^{\rm{inv}})(P^{m})= 1$ for any irreducible $P$ and $m \geq 0$.
%$$(\sigma^* * \sigma^{\rm{inv}})(P^m)= \left\{\begin{array}{l}
%P, \text{ if $m=2$}\\
%0 \text{ otherwise.}
%\end{array}
%\right.$$
\end{lemma}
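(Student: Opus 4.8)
The plan is to avoid a direct computation at $P^m$ and instead exploit the algebraic identities already recorded for the convolution product. The key ingredients are the factorization $\sigma = {\rm{Id}} * z$ from Lemma \ref{convexamples}, together with the defining property ${\rm{Id}} * {\rm{Id}}^{\rm{inv}} = \delta$ of the Dirichlet inverse supplied by Lemma \ref{definvconv}.

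First I would substitute the factorization of $\sigma$ into the target expression, obtaining $\sigma * {\rm{Id}}^{\rm{inv}} = ({\rm{Id}} * z) * {\rm{Id}}^{\rm{inv}}$. Then, using the commutativity and associativity of $*$ granted by Lemma \ref{convmultip}, I would regroup the three factors so that ${\rm{Id}}$ and its inverse ${\rm{Id}}^{\rm{inv}}$ become adjacent, giving $({\rm{Id}} * z) * {\rm{Id}}^{\rm{inv}} = z * ({\rm{Id}} * {\rm{Id}}^{\rm{inv}}) = z * \delta$. Since $\delta$ is the neutral element for $*$ (again Lemma \ref{convmultip}), this last expression equals $z$, which establishes $\sigma * {\rm{Id}}^{\rm{inv}} = z$. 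Evaluating at a prime power and recalling that $z(P^m) = 1$ for every $m \geq 0$ then yields the stated pointwise value $(\sigma * {\rm{Id}}^{\rm{inv}})(P^m) = 1$.

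There is no genuine obstacle here: the entire content of the proof is the correct bookkeeping of associativity and commutativity, so the argument is essentially one line once the factorization $\sigma = {\rm{Id}} * z$ is invoked. As a cross-check I would also verify the identity by the direct route. Feeding the explicit values ${\rm{Id}}^{\rm{inv}}(P) = P$ and ${\rm{Id}}^{\rm{inv}}(P^\ell) = 0$ for $\ell \geq 2$ from Lemma \ref{express-invsigma} into the convolution sum $\sum_{\ell=0}^{m} \sigma(P^{m-\ell}) \, {\rm{Id}}^{\rm{inv}}(P^\ell)$ kills every term except $\ell = 0$ and $\ell = 1$, leaving $\sigma(P^m) + P \cdot \sigma(P^{m-1})$. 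Writing $\sigma(P^m) = 1 + P + \cdots + P^m$ and $P \cdot \sigma(P^{m-1}) = P + P^2 + \cdots + P^m$, all powers $P, P^2, \ldots, P^m$ appear twice and cancel over $\F_2$, so the sum collapses to $1$. I would present the algebraic derivation as the main argument, since it is shorter and makes transparent why the inverse of ${\rm{Id}}$ conspires with the factorization of $\sigma$ to produce exactly $z$.
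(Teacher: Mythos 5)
Your proof is correct and takes essentially the same route as the paper: the paper's proof is likewise a one-line algebraic derivation, citing $\sigma * \mu = {\rm{Id}}$ together with $\mu^{\rm{inv}} = z$, which is just the M\"obius-inverted form of the factorization $\sigma = {\rm{Id}} * z$ that you regroup against ${\rm{Id}} * {\rm{Id}}^{\rm{inv}} = \delta$. Your direct computation at prime powers is a harmless additional cross-check not present in the paper.
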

\begin{proof}
It follows from the facts: $\sigma * \mu = {\rm{Id}}$ and $\mu^{\rm{inv}} = z$.
\end{proof}
\begin{corollary}
If $A$ is odd and perfect, then $$\text{$\dis{\frac{A+1}{A} \ = \sum_{D\mid A, D \not=A, \frac{A}{D} \text{square-free}} \frac{\sigma(D)}{D}}$.}$$
%so that $\dis{(\sum_{D\mid A, D \not=A, \frac{A}{D} \text{square-free}} \frac{\sigma(D)}{D})' = 0}$}.$$
\end{corollary}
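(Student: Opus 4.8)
The plan is to evaluate the identity $\sigma * {\rm{Id}}^{\rm{inv}} = z$ at $A$ and then peel off the extreme term. First I would recall what an odd perfect $A$ gives us: such an $A$ is a square, so in particular $A \neq 1$, and $\sigma(A) = A$.

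The key structural input is the value of ${\rm{Id}}^{\rm{inv}}$ on an arbitrary argument. From Lemma \ref{express-invsigma} we have ${\rm{Id}}^{\rm{inv}}(P) = P$ and ${\rm{Id}}^{\rm{inv}}(P^m) = 0$ for every $m \geq 2$. Since ${\rm{Id}}^{\rm{inv}}$ is multiplicative, for a factorization $B = P_1^{e_1} \cdots P_k^{e_k}$ the product $\prod_i {\rm{Id}}^{\rm{inv}}(P_i^{e_i})$ vanishes as soon as some $e_i \geq 2$, and otherwise equals $\prod_i P_i$. Hence ${\rm{Id}}^{\rm{inv}}(B) = B$ when $B$ is square-free and ${\rm{Id}}^{\rm{inv}}(B) = 0$ otherwise. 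Writing out the convolution $(\sigma * {\rm{Id}}^{\rm{inv}})(A) = \sum_{D \mid A} \sigma(D)\, {\rm{Id}}^{\rm{inv}}(A/D)$ and substituting $B = A/D$, all terms with $A/D$ not square-free drop out, so the convolution collapses to
$$\sum_{D \mid A, \ A/D \text{ square-free}} \sigma(D) \cdot \frac{A}{D} = (\sigma * {\rm{Id}}^{\rm{inv}})(A) = z(A) = 1.$$

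Finally I would isolate the summand $D = A$: there $A/D = 1$ is square-free and the term equals $\sigma(A)\cdot 1 = A$, using perfection. Moving it to the right-hand side over $\F_2$ gives $\sum_{D \mid A,\, D \neq A,\, A/D \text{ square-free}} \sigma(D)\cdot \frac{A}{D} = A + 1$; observe that the summand $D = 1$ never contributes, since $A/1 = A = S^2$ is not square-free, so no further exclusion is needed. Dividing through by $A$ yields the claimed identity. No step is a genuine obstacle here: the only points requiring care are the explicit description of ${\rm{Id}}^{\rm{inv}}$ on square-free arguments and the characteristic-two arithmetic that turns $1 - A$ into $1 + A$.
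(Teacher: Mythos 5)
Your proof is correct and is exactly the argument the paper intends: the corollary is stated (without proof) immediately after the lemma $\sigma * {\rm{Id}}^{\rm{inv}} = z$, and your derivation — evaluate that identity at $A$, use that ${\rm{Id}}^{\rm{inv}}(B)=B$ for $B$ square-free and ${\rm{Id}}^{\rm{inv}}(B)=0$ otherwise, then peel off the $D=A$ term via $\sigma(A)=A$ and divide by $A$ — is precisely the intended one, matching the pattern of the paper's other corollaries. One cosmetic slip: being a square does not by itself give $A \neq 1$ (that follows instead from odd perfect polynomials being nonconstant, e.g. $\omega(A)\geq 5$), but this is harmless since for $A=1$ both sides of the identity vanish anyway.
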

\subsection{The Dirichlet inverse of $\phi$} \label{invphi}
We denote by $\phi^{\rm{inv}}$ the Dirichlet inverse of $\phi$.
\begin{lemma} \label{express-invsigma}
One has $\phi^{\rm{inv}} = {\rm{Id}}^{\rm{inv}} *z$ so that $\phi^{\rm{inv}}(P^m)=1+P,$ for any irreducible $P$ and $m \in \N^*$.
%ii) If $S = P_1^{m_1} \cdots  P_r^{m_r}$, with $P_i$ irreducible and $P_i \not= P_j$ if $i \not= j$, then %$\phi^{\rm{inv}}(S)=(1+P_1)\cdots (1+P_r)$.
\end{lemma}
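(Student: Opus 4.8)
The plan is to establish the functional identity $\phi^{\rm{inv}} = {\rm{Id}}^{\rm{inv}} * z$ by pure manipulation inside the convolution ring, and then to read off the value at prime powers from a computation already carried out. I would start from the relation $\phi^{\rm{inv}} * {\rm{Id}} = z$ recorded in Examples \ref{convexamples2}; recall this itself follows from $\phi * z = {\rm{Id}}$ (Lemma \ref{convexamples}) upon convolving with $\phi^{\rm{inv}}$ and using $\phi^{\rm{inv}} * \phi = \delta$. Thus no new identity beyond those already collected is needed.

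Next I would convolve both sides of $\phi^{\rm{inv}} * {\rm{Id}} = z$ with ${\rm{Id}}^{\rm{inv}}$. By commutativity and associativity of $*$ (Lemma \ref{convmultip}) and the defining property ${\rm{Id}} * {\rm{Id}}^{\rm{inv}} = \delta$ (Lemma \ref{definvconv}), the left-hand side collapses as $\phi^{\rm{inv}} * ({\rm{Id}} * {\rm{Id}}^{\rm{inv}}) = \phi^{\rm{inv}} * \delta = \phi^{\rm{inv}}$, while the right-hand side is $z * {\rm{Id}}^{\rm{inv}} = {\rm{Id}}^{\rm{inv}} * z$. This yields the claimed identity $\phi^{\rm{inv}} = {\rm{Id}}^{\rm{inv}} * z$.

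For the explicit value, I would simply invoke the lemma of the preceding subsubsection on the convolution ${\rm{Id}}^{\rm{inv}} * z$, which asserts $({\rm{Id}}^{\rm{inv}} * z)(P^m) = 1 + P$ for every irreducible $P$ and every $m \geq 1$. Substituting the identity just obtained gives $\phi^{\rm{inv}}(P^m) = 1 + P$, as required.

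There is essentially no obstacle here: everything reduces to bookkeeping in the commutative, associative convolution algebra with unit $\delta$, and each needed relation (${\rm{Id}} * {\rm{Id}}^{\rm{inv}} = \delta$, the existence of the inverses, and the value of ${\rm{Id}}^{\rm{inv}} * z$) is already in place. The only points to verify carefully are that $\phi^{\rm{inv}}$ and ${\rm{Id}}^{\rm{inv}}$ genuinely exist, both guaranteed by Lemma \ref{definvconv}. As an independent check one could instead run a short induction on $m$ using the recursion $f^{\rm{inv}}(P^m) = \sum_{\ell=0}^{m-1} f^{\rm{inv}}(P^{\ell}) f(P^{m-\ell})$ with $f = \phi$ and $\phi(P^r) = P^r + P^{r-1}$; this gives $\phi^{\rm{inv}}(P) = \phi(P) = 1 + P$ and, for $m \geq 2$, confirms $\phi^{\rm{inv}}(P^m) = 1 + P$, matching the algebraic derivation.
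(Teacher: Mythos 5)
Your proposal is correct and takes essentially the same route as the paper: a short manipulation in the convolution ring to establish $\phi^{\rm{inv}} = {\rm{Id}}^{\rm{inv}} * z$, followed by an appeal to the preceding lemma for the value $({\rm{Id}}^{\rm{inv}} * z)(P^m) = 1+P$. The only cosmetic difference is that the paper gets the identity from $\phi = {\rm{Id}} * \mu$ and $\mu^{\rm{inv}} = z$ via $(f*g)^{\rm{inv}} = f^{\rm{inv}} * g^{\rm{inv}}$ (Lemma \ref{propert-invconv}), while you cancel ${\rm{Id}}$ out of $\phi^{\rm{inv}} * {\rm{Id}} = z$ by convolving with ${\rm{Id}}^{\rm{inv}}$ --- equivalent bookkeeping with the same ingredients.
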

\begin{proof}
It follows from the facts: $\phi = {\rm{Id}} * \mu$ and $\mu^{\rm{inv}} = z$.
\end{proof}
%\begin{corollary} \label{perfectcondition}
%If $A$ is perfect, then
%$$A = \sum_{D \mid A, D \not=1} \sigma^{\rm{inv}}(D) \cdot \sigma(\frac{A}{D}).$$
%\end{corollary}
%\begin{proof}
%It follows from the facts: $(\sigma^{\rm{inv}} * \sigma)(A) = \delta(A) = 0$ and $\sigma(A)=A$.
%\end{proof}
\subsubsection{The convolution $\sigma * \phi^{\rm{inv}}$}
\begin{lemma} Let $P$ be irreducible and $r \in \N$. Then
$$\text{$(\sigma * \phi^{\rm{inv}})(P^{2r})= 1$ and $(\sigma * \phi^{\rm{inv}})(P^{2r+1})= 0$.}$$
%$$(\sigma^* * \sigma^{\rm{inv}})(P^m)= \left\{\begin{array}{l}
%P, \text{ if $m=2$}\\
%0 \text{ otherwise.}
%\end{array}
%\right.$$
\end{lemma}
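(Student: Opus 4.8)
The plan is to avoid a direct term-by-term computation and instead recognize $\sigma * \phi^{\rm{inv}}$ as a square convolution, whose values at prime powers are already recorded in Lemma \ref{squareconv}.

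First I would collect the algebraic identities. From Lemma \ref{convexamples} we have $\sigma = {\rm{Id}} * z$, and from Examples \ref{convexamples2} we have $\phi^{\rm{inv}} * {\rm{Id}} = z$ (equivalently, convolving $\phi * z = {\rm{Id}}$ on the left by $\phi^{\rm{inv}}$ and using $\phi^{\rm{inv}} * \phi = \delta$ gives $z = \phi^{\rm{inv}} * {\rm{Id}}$). Using commutativity and associativity of the convolution (Lemma \ref{convmultip}), I would then rewrite
\[
\sigma * \phi^{\rm{inv}} = ({\rm{Id}} * z) * \phi^{\rm{inv}} = z * ({\rm{Id}} * \phi^{\rm{inv}}) = z * z = z^{\rm{2conv}}.
\]
Applying Lemma \ref{squareconv} with $f = z$, and noting $z(P^r) = 1$ for every $r$, immediately yields $z^{\rm{2conv}}(P^{2r}) = (z(P^r))^2 = 1$ and $z^{\rm{2conv}}(P^{2r+1}) = 0$, which is exactly the assertion.

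As a self-contained cross-check I would also run the direct computation. Since $\phi^{\rm{inv}}(1) = 1$ and $\phi^{\rm{inv}}(P^k) = 1 + P$ for all $k \geq 1$ by the preceding lemma, for $m \geq 1$ one obtains
\[
(\sigma * \phi^{\rm{inv}})(P^m) = \sigma(P^m) + (1+P)\sum_{\ell=0}^{m-1} \sigma(P^\ell).
\]
Writing $\sigma(P^\ell) = (1 + P^{\ell+1})/(1+P)$, the factor $1+P$ cancels the denominator, leaving $m + (P + P^2 + \cdots + P^m)$; over $\F_2$ these powers cancel against the matching terms inside $\sigma(P^m) = 1 + P + \cdots + P^m$, and one is left with $1 + m \pmod 2$, i.e.\ $1$ for $m$ even and $0$ for $m$ odd (the case $m = 0$ being $(\sigma * \phi^{\rm{inv}})(1) = 1$).

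I do not anticipate any real obstacle: the identity-based route is essentially bookkeeping with the convolution algebra, and the only delicate point in the alternative computation is the mod-$2$ cancellation and the tracking of the parity of $m$. For this reason I would present the reduction $\sigma * \phi^{\rm{inv}} = z * z$ as the main proof, since it sidesteps the parity argument entirely and fits the pattern of the earlier convolution lemmas.
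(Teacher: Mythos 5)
Your proposal is correct, but your primary argument takes a genuinely different route from the paper's. The paper proves the lemma by the direct prime-power computation: it expands $(\sigma * \phi^{\rm{inv}})(P^m)$, substitutes $\phi^{\rm{inv}}(P^k)=1+P$, uses $(1+P)\sigma(P^{\ell}) = 1+P^{\ell+1}$, and finds the value $m-1 \pmod 2$ --- which is, up to minor index bookkeeping, exactly the ``cross-check'' you relegate to second place. Your main proof instead works at the level of the convolution algebra: from $\sigma = {\rm{Id}} * z$ (Lemma \ref{convexamples}) and $\phi^{\rm{inv}} * {\rm{Id}} = z$ (Examples \ref{convexamples2}), together with commutativity and associativity (Lemma \ref{convmultip}), you get $\sigma * \phi^{\rm{inv}} = z * z = z^{\rm{2conv}}$, and then Lemma \ref{squareconv} with $f=z$ gives the values $1$ and $0$ at even and odd powers immediately. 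Both chains of identities check out (in particular $\phi^{\rm{inv}}$ exists and the reduction to $z*z$ is legitimate). What your route buys is structural insight: it explains the even/odd dichotomy as an instance of the general fact that square convolutions vanish at odd prime powers over $\F_2$, it needs no knowledge of the explicit values $\phi^{\rm{inv}}(P^k)$, and it avoids the mod-$2$ parity bookkeeping entirely. What the paper's computation buys is self-containedness and uniformity with the neighbouring lemmas, which are all proved by the same expand-and-cancel template; it also incidentally yields the closed form $m-1 \pmod 2$ for every $m$ at once. Either proof would be acceptable; yours is arguably the cleaner of the two.
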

\begin{proof}
One has:\\
$\begin{array}{lcl}
(\sigma * \phi^{\rm{inv}})(P^m) &=& \dis{\sum_{\ell=0}^{m} \sigma(P^{\ell}) \cdot \sigma^{\rm{inv}}(P^{m-\ell})}\\
&=&\dis{\phi^{\rm{inv}}(P^m)+ \sigma(P^m)+ \sum_{\ell=1}^{m-1} \sigma(P^{\ell}) \cdot \phi^{\rm{inv}}(P^{m-\ell})}\\
&=&\dis{1+P+\sigma(P^m)+ \sum_{\ell=1}^{m-1} (1+P) \cdot
\sigma(P^{\ell})}\\
&=&\dis{1+P+\sigma(P^m)+ \sum_{\ell=1}^{m-1} (1+P^{\ell+1})}\\
&=& m-1.
%&=&\dis{P+\cdots + P^{m-1}+ \frac{1}{1+P} \sum_{\ell=1}^{m-1}   (1+P^{\ell}) (1+P^{m-\ell+1})}\\
%&=&\dis{\frac{P}{1+P} \cdot (1+P^{m-1}) +  \frac{1}{1+P} \sum_{\ell=1}^{m-1}   (1+P^{\ell}) (1+P^{m-\ell+1})}\\
%&=&\dis{\frac{1}{1+P} \cdot [P+P^m+ \sum_{\ell=1}^{m-1} (1+P^{\ell} +P^{m-\ell+1} + P^{m+1})]}\\
%&=&\dis{\frac{1}{1+P} \cdot (m-1)(1+P^{m+1})}\\
%&=&(m-1) \sigma(P^m).
\end{array}$\\
\end{proof}
\begin{corollary}
If $A$ is odd and perfect, then $$\dis{A= 1+\sigma(rad(A)) + \sum_{D\mid A, D \not=1,A} \sigma(D) \cdot \phi^{\rm{inv}}(\frac{A}{D})}.$$
\end{corollary}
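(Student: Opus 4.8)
The plan is to derive this corollary directly from the preceding lemma, which records the values of $\sigma * \phi^{\rm{inv}}$ on prime powers, exactly in the manner that the other corollaries in this section are obtained from their companion lemmas. First I would recall that an odd perfect polynomial $A$ is necessarily a square, say $A = S^2$; hence every irreducible factor of $A$ occurs with even exponent. Writing $A = P_1^{2r_1} \cdots P_k^{2r_k}$ and using that $\sigma * \phi^{\rm{inv}}$ is multiplicative (Lemma \ref{convmultip}) together with the lemma's value $(\sigma * \phi^{\rm{inv}})(P_i^{2r_i}) = 1$, I obtain
$$(\sigma * \phi^{\rm{inv}})(A) = \prod_{i=1}^{k} (\sigma * \phi^{\rm{inv}})(P_i^{2r_i}) = 1.$$

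Next I would expand the convolution by isolating the two extreme divisors $D = 1$ and $D = A$:
$$(\sigma * \phi^{\rm{inv}})(A) = \phi^{\rm{inv}}(A) + \sigma(A) + \sum_{D \mid A,\, D \not= 1, A} \sigma(D)\, \phi^{\rm{inv}}\left(\frac{A}{D}\right).$$
Here $\sigma(A) = A$, since $A$ is perfect. The one point deserving attention is the evaluation of $\phi^{\rm{inv}}(A)$: from the formula $\phi^{\rm{inv}}(P^m) = 1 + P$ valid for every $m \in \N^*$, together with the multiplicativity of $\phi^{\rm{inv}}$, one finds $\phi^{\rm{inv}}(A) = \prod_{i=1}^{k}(1 + P_i)$. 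Since $rad(A) = P_1 \cdots P_k$ is square-free and $\sigma(P_i) = 1 + P_i$, this product is precisely $\sigma(rad(A))$, so $\phi^{\rm{inv}}(A) = \sigma(rad(A))$.

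Combining these observations gives the relation
$$1 = \sigma(rad(A)) + A + \sum_{D \mid A,\, D \not= 1, A} \sigma(D)\, \phi^{\rm{inv}}\left(\frac{A}{D}\right).$$
Since we work over $\F_2$, where addition coincides with subtraction, I would simply transpose to solve for $A$ and read off
$$A = 1 + \sigma(rad(A)) + \sum_{D \mid A,\, D \not= 1, A} \sigma(D)\, \phi^{\rm{inv}}\left(\frac{A}{D}\right),$$
which is the claimed identity. No step presents a genuine obstacle; the only mildly non-mechanical point is the identification $\phi^{\rm{inv}}(A) = \sigma(rad(A))$, which follows at once from the explicit value of $\phi^{\rm{inv}}$ on prime powers and the fact that $A$ is a square.
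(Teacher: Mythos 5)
Your proof is correct, and it is exactly the argument the paper intends: the paper actually states this corollary without proof, but your derivation follows the same pattern as the proofs of the neighboring corollaries (e.g.\ Corollaries \ref{corol-convsigmaphi} and \ref{corol-convsigmastarId}) --- evaluate the convolution at $A=S^2$ via the lemma and multiplicativity, isolate the divisors $D=1$ and $D=A$, and use $\sigma(A)=A$. The key identification $\phi^{\rm{inv}}(A)=\sigma(rad(A))$, which the paper leaves implicit, is handled correctly by you via $\phi^{\rm{inv}}(P^m)=1+P$.
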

\subsection{The Dirichlet inverse of $\sigma$} \label{invsigma}
We denote by $\sigma^{\rm{inv}}$ the Dirichlet inverse of $\sigma$.
\begin{lemma} \label{express-invsigma}
If $P$ is irreducible, then
$$\text{$\sigma^{\rm{inv}}(P)=1+P$,  $\sigma^{\rm{inv}}(P^2)=P$ and  $\sigma^{\rm{inv}}(P^m)=0$ for $m \geq 3$.}$$
%ii) If $A$ si divisible by $P^m$ with $P$ irreducible and $m \geq 3$, then $\sigma^{\rm{inv}}(A)=0$.
\end{lemma}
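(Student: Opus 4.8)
The plan is to avoid the recursion and instead exploit the convolution identities already established. By Lemma~\ref{convexamples} one has $\sigma = {\rm{Id}} * z$, so Lemma~\ref{propert-invconv} together with the relation $z^{\rm{inv}} = \mu$ from Examples~\ref{convexamples2} yields
\[
\sigma^{\rm{inv}} = ({\rm{Id}} * z)^{\rm{inv}} = {\rm{Id}}^{\rm{inv}} * z^{\rm{inv}} = {\rm{Id}}^{\rm{inv}} * \mu.
\]
This is exactly parallel to the computation of $\phi^{\rm{inv}} = {\rm{Id}}^{\rm{inv}} * z$ in Subsection~\ref{invphi}, and it reduces the whole statement to evaluating one convolution on prime powers.

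First I would record, for $P$ irreducible and $m \geq 1$,
\[
\sigma^{\rm{inv}}(P^m) = ({\rm{Id}}^{\rm{inv}} * \mu)(P^m) = \sum_{\ell=0}^{m} {\rm{Id}}^{\rm{inv}}(P^{\ell}) \, \mu(P^{m-\ell}),
\]
and then exploit the two very thin supports that force the sum to collapse: by the formula for ${\rm{Id}}^{\rm{inv}}$ (Subsection~\ref{invId}) the factor ${\rm{Id}}^{\rm{inv}}(P^{\ell})$ vanishes unless $\ell = 0$ (value $1$) or $\ell = 1$ (value $P$), while $\mu(P^{m-\ell})$ vanishes unless $m-\ell \in \{0,1\}$ (value $1$ in both cases). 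Hence only the pairs $(\ell, m-\ell)$ with $\ell \in \{0,1\}$ and $m-\ell \in \{0,1\}$ can contribute.

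Finally I would read off the three regimes. For $m=1$ both $\ell = 0$ and $\ell = 1$ survive, giving $1\cdot 1 + P \cdot 1 = 1+P$; for $m=2$ only $\ell = 1$ survives, giving $P \cdot 1 = P$; and for $m \geq 3$ no index can satisfy both $\ell \leq 1$ and $m - \ell \leq 1$ at once, so the sum is empty and $\sigma^{\rm{inv}}(P^m) = 0$. I do not expect any genuine obstacle here, since all the weight is in bookkeeping the supports; multiplicativity (Lemma~\ref{definvconv}) then fixes $\sigma^{\rm{inv}}$ everywhere from these prime-power values. As an independent check one could instead run the recursion of Lemma~\ref{definvconv} directly, where the only real computation is the identity $\sigma(P^m) + (1+P)\sigma(P^{m-1}) + P\,\sigma(P^{m-2}) = 0$ for $m \geq 3$; writing $\sigma(P^k) = (1+P^{k+1})/(1+P)$ over $\F_2$ turns it into the telescoping equality $(1+P)(1+P^m) = (1+P^{m+1}) + (P+P^m)$, confirming the vanishing.
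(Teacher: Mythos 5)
Your proof is correct, but it takes a genuinely different route from the paper. The paper proves the lemma head-on: it runs the defining recursion of Lemma~\ref{definvconv} on $\sigma$, computing $\sigma^{\rm{inv}}(P)$, $\sigma^{\rm{inv}}(P^2)$, $\sigma^{\rm{inv}}(P^3)$ explicitly and then inducting on $m \geq 4$ via $\sigma^{\rm{inv}}(P^m) = \sigma(P^m) + (1+P)\sigma(P^{m-1}) + P\,\sigma(P^{m-2})$, which collapses to $P^m + P\cdot P^{m-1} = 0$ in characteristic $2$ --- essentially the computation you relegate to your ``independent check.'' You instead factor the problem structurally: $\sigma^{\rm{inv}} = ({\rm{Id}}*z)^{\rm{inv}} = {\rm{Id}}^{\rm{inv}} * \mu$ via Lemmas~\ref{convexamples} and~\ref{propert-invconv} and $z^{\rm{inv}} = \mu$, and then the thin supports of ${\rm{Id}}^{\rm{inv}}$ (concentrated on $\{1,P\}$) and $\mu$ (concentrated on square-free powers) force the convolution sum to have at most two terms, from which all three regimes are read off with no induction at all. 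Your derivation is legitimate and non-circular: the ${\rm{Id}}^{\rm{inv}}$ lemma of Subsection~\ref{invId} and the identities you invoke all precede this one in the paper. What each approach buys: the paper's computation is self-contained, relying only on the recursion (notably, it does not use Lemma~\ref{propert-invconv}, which the paper states without proof and on which your argument genuinely leans); your argument is shorter, explains conceptually \emph{why} $\sigma^{\rm{inv}}$ vanishes from $P^3$ on (a convolution of two finitely supported sequences has finite support), and is in the same spirit as the paper's own later deductions such as $\sigma^{\rm{inv}} * z = (\sigma*\mu)^{\rm{inv}} = {\rm{Id}}^{\rm{inv}}$.
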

\begin{proof}
$\sigma^{\rm{inv}}(P)= \sigma(P)=1+P$, $\sigma^{\rm{inv}}(P^2)= \sigma(P^2) + \sigma^{\rm{inv}}(P) \sigma(P)= P.$\\
%$\sigma^{\rm{inv}}(P^2)= \sigma(P^2) + \sigma^{\rm{inv}}(P) \sigma(P) = 1+P+P^2 + (1+P)(1+P) = P$,\\
$\begin{array}{lcl}
\sigma^{\rm{inv}}(P^3)&= &\sigma(P^3) + \sigma^{\rm{inv}}(P) \sigma(P^2) + \sigma^{\rm{inv}}(P^2) \sigma(P)\\
  &=& 1+P+P^2+P^3 + (1+P)(1+P+P^2)+P(1+P)\\
  &=&0.
  \end{array}$.\\
For $m \geq 4$, we proceed by induction on $m$. Suppose that $\sigma^{\rm{inv}}(P^{\ell}) = 0$, for $3 \leq \ell \leq m-1$. We get:\\
$\begin{array}{lcl}
\sigma^{\rm{inv}}(P^{m})&=&\dis{\sigma(P^{m}) + \sum_{\ell =1}^{m-1}
 \sigma^{\rm{inv}}(P^{\ell})  \sigma(P^{m-\ell})}\\
&=& \dis{\sigma(P^{m})+ (1+P) \sigma(P^{m-1})+P \sigma(P^{m-2})+0}\\
&=&  \sigma(P^{m}) +  \sigma(P^{m-1}) + P(\sigma(P^{m-1}) + \sigma(P^{m-2})) \\
&=& P^m + P \cdot P^{m-1}\\
&=& 0.
\end{array}$
%ii) is true by the multiplicativity of $\sigma^{\rm{inv}}$.
\end{proof}
\begin{corollary} \label{perfectcondition}
If $A$ is odd and perfect, then $$\dis{A = \sigma^{\rm{inv}}(A) + \sum_{D \mid A, D \not=A,1} \sigma^{\rm{inv}}(D) \cdot \sigma(\frac{A}{D})}.$$
\end{corollary}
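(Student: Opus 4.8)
The plan is to obtain the identity directly from the defining property of the Dirichlet inverse. By Lemma~\ref{definvconv} we have $\sigma * \sigma^{\rm{inv}} = \delta$, and by commutativity of the convolution (Lemma~\ref{convmultip}) this can be written as $\sigma^{\rm{inv}} * \sigma = \delta$. Evaluating both sides at $A$ gives
$$\delta(A) = (\sigma^{\rm{inv}} * \sigma)(A) = \sum_{D \mid A} \sigma^{\rm{inv}}(D)\, \sigma\!\left(\frac{A}{D}\right),$$
so the whole proof reduces to reorganizing this single equation.

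First I would record that an odd perfect polynomial satisfies $A \neq 1$ (for instance $\omega(A) \geq 5$, as noted in Section~1), whence $\delta(A) = 0$ by the definition of the multiplicative identity in Examples~\ref{multipexamples}. Next I would peel off the two extreme divisors from the sum: the term $D = 1$ contributes $\sigma^{\rm{inv}}(1)\,\sigma(A)$ and the term $D = A$ contributes $\sigma^{\rm{inv}}(A)\,\sigma(1)$, and since $\sigma(1) = \sigma^{\rm{inv}}(1) = 1$ (Lemma~\ref{propertmultip}) these simplify to $\sigma(A)$ and $\sigma^{\rm{inv}}(A)$ respectively. This yields
$$0 = \sigma(A) + \sigma^{\rm{inv}}(A) + \sum_{D \mid A,\ D \neq 1, A} \sigma^{\rm{inv}}(D)\, \sigma\!\left(\frac{A}{D}\right).$$

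Finally, since $A$ is perfect we have $\sigma(A) = A$, and because we work in characteristic $2$ subtraction coincides with addition, so transferring $\sigma(A)$ to the left-hand side gives exactly the claimed equality. There is essentially no obstacle here: the only point requiring attention is the observation $\delta(A) = 0$, which is where the hypothesis is used (only to force $A \neq 1$), and the rest is a routine rearrangement. I note that the explicit values $\sigma^{\rm{inv}}(P) = 1+P$, $\sigma^{\rm{inv}}(P^2) = P$, $\sigma^{\rm{inv}}(P^m) = 0$ for $m \geq 3$ from Lemma~\ref{express-invsigma} are not needed to establish the identity, although they would allow one to collapse the inner sum, since $\sigma^{\rm{inv}}(D)$ vanishes unless $D$ is a product of first and second powers of the primes dividing $A$.
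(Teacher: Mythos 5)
Your proof is correct and is essentially the paper's own argument written out in full: the paper's proof likewise invokes the two facts $(\sigma^{\rm{inv}} * \sigma)(A) = \delta(A) = 0$ and $\sigma(A) = A$, and leaves the peeling-off of the $D=1$ and $D=A$ terms and the characteristic-$2$ rearrangement implicit. Your added remark that $\delta(A)=0$ requires $A \neq 1$ is a welcome bit of care that the paper glosses over, but it does not change the route.
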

\begin{proof}
It follows from the facts: $(\sigma^{\rm{inv}} * \sigma)(A) = \delta(A) = 0$ and $\sigma(A)=A$.
\end{proof}
\subsubsection{The convolution $\sigma^{\rm{inv}} * z$}
\begin{lemma} One has $\sigma^{\rm{inv}} * z = {\rm{Id}}^{\rm{inv}}$ so that for any irreducible $P$,
$$\text{$(\sigma^{\rm{inv}} * z)(P)=P$ and $(\sigma^{\rm{inv}} * z)(P^m)=0$ if $m \geq 2$.}$$
\end{lemma}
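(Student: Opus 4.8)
The plan is to avoid any computation with the explicit values of $\sigma^{\rm{inv}}$ and instead derive the identity purely from the algebraic structure of the convolution, in the same spirit as the earlier proof that $\sigma * {\rm{Id}}^{\rm{inv}} = z$. The starting point is the factorization $\sigma = {\rm{Id}} * z$ from Lemma \ref{convexamples}.

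First I would apply the rule $(f*g)^{\rm{inv}} = f^{\rm{inv}} * g^{\rm{inv}}$ of Lemma \ref{propert-invconv} to this factorization, obtaining $\sigma^{\rm{inv}} = {\rm{Id}}^{\rm{inv}} * z^{\rm{inv}}$. Convolving both sides with $z$ and using commutativity and associativity (Lemma \ref{convmultip}) gives $\sigma^{\rm{inv}} * z = {\rm{Id}}^{\rm{inv}} * (z^{\rm{inv}} * z)$. Now $z^{\rm{inv}} = \mu$ by Examples \ref{convexamples2}, and $z * \mu = \delta$ by Lemma \ref{convexamples}, so $z^{\rm{inv}} * z = \delta$. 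Since $\delta$ is the neutral element for convolution ($f * \delta = f$, Lemma \ref{convmultip}), I conclude $\sigma^{\rm{inv}} * z = {\rm{Id}}^{\rm{inv}}$, which is the first assertion.

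For the explicit values I would simply read off ${\rm{Id}}^{\rm{inv}}(P) = P$ and ${\rm{Id}}^{\rm{inv}}(P^m) = 0$ for $m \geq 2$ from the lemma computing ${\rm{Id}}^{\rm{inv}}$ in Subsection \ref{invId}; these transfer verbatim to $\sigma^{\rm{inv}} * z$ by the identity just proved.

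I expect no genuine obstacle here: the argument is a one-line manipulation of convolution identities, and the only point requiring care is to confirm $z^{\rm{inv}} * z = \delta$, i.e. to recall that the Dirichlet inverse of the constant function $z$ is the M\"obius function $\mu$ and that $z * \mu = \delta$. As a sanity check one could instead verify the claim directly at prime powers using the known values $\sigma^{\rm{inv}}(P) = 1+P$, $\sigma^{\rm{inv}}(P^2) = P$, and $\sigma^{\rm{inv}}(P^m) = 0$ for $m \geq 3$ from Lemma \ref{express-invsigma}: for instance $(\sigma^{\rm{inv}} * z)(P^2) = \sigma^{\rm{inv}}(P^2) + \sigma^{\rm{inv}}(P) + \sigma^{\rm{inv}}(1) = P + (1+P) + 1 = 0$ in $\F_2$, matching ${\rm{Id}}^{\rm{inv}}(P^2) = 0$, and similarly $(\sigma^{\rm{inv}} * z)(P^m) = 1 + (1+P) + P = 0$ for $m \geq 3$.
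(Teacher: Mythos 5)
Your proof is correct and essentially the same as the paper's: the paper writes $\sigma^{\rm{inv}} * z = (\sigma * \mu)^{\rm{inv}} = {\rm{Id}}^{\rm{inv}}$, using $\sigma * \mu = {\rm{Id}}$ (Lemma \ref{convsigmamu}), $\mu^{\rm{inv}} = z$, and $(f*g)^{\rm{inv}} = f^{\rm{inv}} * g^{\rm{inv}}$, which is the same structural argument as yours up to starting from the Möbius-equivalent factorization $\sigma = {\rm{Id}} * z$ and cancelling $z^{\rm{inv}} * z = \delta$ in one extra step.
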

\begin{proof}
We get $\sigma^{\rm{inv}} * z = (\sigma * \mu)^{\rm{inv}} = {\rm{Id}}^{\rm{inv}}$.
%It suffices to consider the case where $A = P^m$, with $P$ irreducible. We claim that
%$(\sigma * \phi)(P^m) = 0$, if $m$ is odd and $(\sigma * \phi)(P^m) = P^m$, if $m$ is even.
%Recall that $\sigma^{\rm{inv}}(P) =1+P$,  $\sigma^{\rm{inv}}(P^2) =P$  and $\sigma^{\rm{inv}}(P^m) = %0$ if $m \geq 3$. One has:\\
\end{proof}
%\begin{corollary}
%???
%ii) If $A$ is a special perfect, then $\dis{A = rad(A)+ \sum_{D \mid A, A \not= A,1} \sigma^{\rm{inv}}(D) \cdot %\frac{A}{D}}$.
%\end{corollary}
\subsubsection{The convolution $\sigma^{\rm{inv}} * {\rm{Id}}$}
\begin{lemma} One has $\sigma^{\rm{inv}} * {\rm{Id}}=\mu$ so that for any irreducible  $P$,
$$\text{$(\sigma^{\rm{inv}} * {\rm{Id}})(P)=1$ and $(\sigma^{\rm{inv}} * {\rm{Id}})(P^{m})=0$, for any $m \geq 2$.}$$
\end{lemma}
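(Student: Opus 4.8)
The plan is to mirror the slick algebraic style of the two preceding lemmas rather than compute anything directly. The crucial input is Lemma~\ref{convsigmamu}, which already records $\sigma * \mu = {\rm{Id}}$; this lets me rewrite ${\rm{Id}}$ as $\sigma * \mu$ inside the convolution, after which the factor $\sigma^{\rm{inv}}$ should simply cancel $\sigma$ and leave $\mu$.

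Concretely, I would write the chain
$$\sigma^{\rm{inv}} * {\rm{Id}} = \sigma^{\rm{inv}} * (\sigma * \mu) = (\sigma^{\rm{inv}} * \sigma) * \mu = \delta * \mu = \mu,$$
justifying each step by an earlier result: the first equality is Lemma~\ref{convsigmamu}, the second is associativity of convolution (Lemma~\ref{convmultip}), the third is the defining property $\sigma^{\rm{inv}} * \sigma = \delta$ of the Dirichlet inverse (Lemma~\ref{definvconv}, together with commutativity), and the last is $\delta * \mu = \mu$ since $\delta$ is the convolution identity (again Lemma~\ref{convmultip}). This establishes the functional identity $\sigma^{\rm{inv}} * {\rm{Id}} = \mu$.

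The stated values at prime powers then drop out of the definition of $\mu$ in Examples~\ref{multipexamples}: a prime $P$ is square-free, so $(\sigma^{\rm{inv}} * {\rm{Id}})(P) = \mu(P) = 1$, whereas $P^m$ with $m \geq 2$ is not square-free, so $(\sigma^{\rm{inv}} * {\rm{Id}})(P^m) = \mu(P^m) = 0$.

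I expect no genuine obstacle here; the only point requiring care is choosing the correct prior identity, namely the factorization ${\rm{Id}} = \sigma * \mu$, so that $\sigma^{\rm{inv}}$ annihilates $\sigma$. As an independent check one can instead compute from the explicit values of $\sigma^{\rm{inv}}$ established just above: for $m \geq 2$ only the terms $\ell = 0,1,2$ survive in $\sum_{\ell=0}^{m} \sigma^{\rm{inv}}(P^{\ell})\, P^{m-\ell}$, yielding $P^m + (1+P)P^{m-1} + P \cdot P^{m-2} = 2P^m + 2P^{m-1} = 0$ over $\F_2$, in agreement with the functional computation.
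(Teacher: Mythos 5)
Your proof is correct and takes essentially the same route as the paper: the paper's entire proof is ``It follows from the fact: $\sigma * \mu = {\rm{Id}}$,'' and your chain $\sigma^{\rm{inv}} * {\rm{Id}} = \sigma^{\rm{inv}} * (\sigma * \mu) = (\sigma^{\rm{inv}} * \sigma) * \mu = \delta * \mu = \mu$ is exactly the reasoning that terse sentence leaves implicit. Your closing verification from the explicit values of $\sigma^{\rm{inv}}$ is a sound extra check, not a divergence.
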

\begin{proof} It follows from the fact: $\sigma * \mu = {\rm{Id}}$.
\end{proof}
\begin{corollary}
%i) One gets: $\sigma^{\rm{inv}} * {\rm{Id}} = \mu$.\\
If $A$ is special and perfect, then $$\dis{A = rad(A)+ \sum_{D \mid A, A \not= A,1} \sigma^{\rm{inv}}(D) \cdot \frac{A}{D}}.$$
\end{corollary}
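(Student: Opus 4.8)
The plan is to evaluate the identity $\sigma^{\rm{inv}} * {\rm{Id}} = \mu$ from the preceding lemma at the polynomial $A$ and read off the stated formula. First I would record that, being odd and perfect, $A$ is a square, and the hypothesis \emph{special} sharpens this to $A = S^2$ with $S$ square-free. In particular every irreducible factor of $A$ occurs to exponent exactly $2$, so $A$ is not square-free; hence $\mu(A) = 0$ and therefore $(\sigma^{\rm{inv}} * {\rm{Id}})(A) = 0$.

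Next I would expand the convolution directly, $(\sigma^{\rm{inv}} * {\rm{Id}})(A) = \sum_{D \mid A} \sigma^{\rm{inv}}(D) \cdot \frac{A}{D}$, and isolate its two extreme terms. The divisor $D = 1$ contributes $\sigma^{\rm{inv}}(1) \cdot A = A$, while the divisor $D = A$ contributes $\sigma^{\rm{inv}}(A) \cdot 1 = \sigma^{\rm{inv}}(A)$, leaving the middle sum over the proper nontrivial divisors of $A$.

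The only genuine computation is to identify $\sigma^{\rm{inv}}(A)$ with $rad(A)$, and this is precisely where the special hypothesis is used. Writing $S = P_1 \cdots P_k$ with the $P_i$ distinct irreducibles, so that $A = P_1^2 \cdots P_k^2$, multiplicativity of $\sigma^{\rm{inv}}$ gives $\sigma^{\rm{inv}}(A) = \prod_{i=1}^{k} \sigma^{\rm{inv}}(P_i^2)$; the earlier evaluation $\sigma^{\rm{inv}}(P^2) = P$ then yields $\sigma^{\rm{inv}}(A) = P_1 \cdots P_k = rad(A)$. Assembling the pieces gives $0 = A + rad(A) + \sum_{D \mid A,\, D \neq 1, A} \sigma^{\rm{inv}}(D) \cdot \frac{A}{D}$, and since we work in characteristic $2$ (where $A + A = 0$) this rearranges at once to the claimed equality $A = rad(A) + \sum_{D \mid A,\, D \neq 1, A} \sigma^{\rm{inv}}(D) \cdot \frac{A}{D}$. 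I expect no serious obstacle here; the one point worth double-checking is that the special hypothesis is essential, since without it a prime occurring to exponent $m \geq 3$ would contribute $\sigma^{\rm{inv}}(P^m) = 0$ and spoil the clean identification $\sigma^{\rm{inv}}(A) = rad(A)$.
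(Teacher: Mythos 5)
Your proof is correct and follows essentially the same route as the paper: the paper's proof is the one-line observation that $(\sigma^{\rm{inv}} * {\rm{Id}})(A) = 0$ (via the lemma $\sigma^{\rm{inv}} * {\rm{Id}} = \mu$ and $\mu(A)=0$ for the square $A$), with the identification $\sigma^{\rm{inv}}(A) = rad(A)$ left implicit. You have simply supplied the details the paper omits, including the correct observation that the special hypothesis is what forces every exponent to equal $2$ and hence makes $\sigma^{\rm{inv}}(A) = rad(A)$ rather than $0$.
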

\begin{proof}
It follows from the fact: $(\sigma^{\rm{inv}} * {\rm{Id}})(A)= 0$.
\end{proof}
\subsubsection{The convolution $\sigma^{\rm{inv}} * \mu = (\sigma * z)^{\rm{inv}}$}
\begin{lemma} Let $P$ be irreducible. Then $(\sigma^{\rm{inv}} * \mu)(P^m)=P$ if $m \in \{1,3\}$,  $(\sigma^{\rm{inv}} * \mu)(P^2)=1$ and $(\sigma^{\rm{inv}} * \mu)(P^m)=0$ if $m \geq 4$.
\end{lemma}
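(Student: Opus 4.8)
The plan is to compute the convolution directly from its definition, exploiting the fact that $\mu$ is supported only on $1$ and on primes to the first power. First I would write, for $m \geq 1$,
$$(\sigma^{\rm{inv}} * \mu)(P^m) = \dis{\sum_{\ell=0}^{m} \sigma^{\rm{inv}}(P^{\ell}) \cdot \mu(P^{m-\ell})}.$$
Since $\mu(P^{m-\ell}) = 0$ whenever $m-\ell \geq 2$, while $\mu(1) = \mu(P) = 1$, only the indices $\ell = m$ and $\ell = m-1$ contribute, so the sum collapses to the two-term expression
$$(\sigma^{\rm{inv}} * \mu)(P^m) = \sigma^{\rm{inv}}(P^m) + \sigma^{\rm{inv}}(P^{m-1}).$$

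Next I would substitute the values of $\sigma^{\rm{inv}}$ already established for the Dirichlet inverse of $\sigma$, namely $\sigma^{\rm{inv}}(1) = 1$, $\sigma^{\rm{inv}}(P) = 1+P$, $\sigma^{\rm{inv}}(P^2) = P$, and $\sigma^{\rm{inv}}(P^m) = 0$ for $m \geq 3$. Working in $\F_2[x]$, this yields $(\sigma^{\rm{inv}} * \mu)(P) = (1+P)+1 = P$, then $(\sigma^{\rm{inv}} * \mu)(P^2) = P + (1+P) = 1$, then $(\sigma^{\rm{inv}} * \mu)(P^3) = 0 + P = P$, and finally $(\sigma^{\rm{inv}} * \mu)(P^m) = 0 + 0 = 0$ for every $m \geq 4$. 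These agree with the claimed values, which finishes the proof. I would state the relevant values of $\sigma^{\rm{inv}}$ explicitly in the proof so that it remains self-contained even though several earlier lemmas share the same label.

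There is essentially no obstacle here; the only point that demands a moment's attention is the characteristic-two cancellation in the case $m=2$, where $\sigma^{\rm{inv}}(P^2) + \sigma^{\rm{inv}}(P) = P + 1 + P$ must be read as $1$ and not as $2P+1$. An alternative route would invoke the identity $\sigma^{\rm{inv}} * \mu = (\sigma * z)^{\rm{inv}}$, combining the values of $\sigma * z$ from Lemma \ref{convsigmaz} with the recursive formula for the Dirichlet inverse in Lemma \ref{definvconv}; but that approach requires inverting $\sigma * z$ termwise, which is strictly more laborious than the two-term collapse above, so I would retain it only as a consistency check rather than as the main argument.
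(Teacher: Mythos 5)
Your proof is correct and follows essentially the same route as the paper: both exploit the fact that $\mu$ vanishes on $P^k$ for $k\geq 2$ so that only the terms $\sigma^{\rm{inv}}(P^m)\mu(1)$ and $\sigma^{\rm{inv}}(P^{m-1})\mu(P)$ survive, and then substitute the known values $\sigma^{\rm{inv}}(P)=1+P$, $\sigma^{\rm{inv}}(P^2)=P$, $\sigma^{\rm{inv}}(P^m)=0$ for $m\geq 3$. Your statement of the uniform two-term collapse before case-checking is a slightly tidier presentation of the same computation the paper carries out case by case.
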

\begin{proof} %It suffices to consider the case where $A = P^m$, with $P$ irreducible. We claim that
%$(\sigma * \phi)(P^m) = 0$, if $m$ is odd and $(\sigma * \phi)(P^m) = P^m$, if $m$ is even.
%Recall that $\sigma^{\rm{inv}}(P) =1+P$,  $\sigma^{\rm{inv}}(P^2) =P$  and $\sigma^{\rm{inv}}(P^m) = %0$ if $m \geq 3$. One has:\\
$(\sigma^{\rm{inv}} * \mu)(P) =\sigma^{\rm{inv}}(P) + \mu(P) =1+P+1 = P$.\\
$(\sigma^{\rm{inv}} * \mu)(P^2) =\sigma^{\rm{inv}}(P^2) + \mu(P^2) +\sigma^{\rm{inv}}(P) \cdot \mu(P) =P+0+(1+P) \cdot 1 = 1$.\\
$(\sigma^{\rm{inv}} * \mu)(P^3) = 0 + 0 + P \cdot 1 + 0 = P$.\\
%$(\sigma^{\rm{inv}} * \mu)(P^4) = 0 + 0 + 0 \cdot 1 + 0 = 0$.\\
%\sigma^{\rm{inv}}(P^3) + \mu(P^3) +\sigma^{\rm{inv}}(P) \cdot \mu(P) =P+0+(1+P) \cdot 1 = 1$.
For $m\geq 4$,
$(\sigma^{\rm{inv}} * \mu)(P^m) = \sigma^{\rm{inv}}(P^m)+ \mu(P^m)+ \sigma^{\rm{inv}}(P^{m-1}) \cdot \mu(P) + 0 = 0 +0$.\\
%$(\sigma^{\rm{inv}} * {\rm{Id}})(P^m) = \dis{\sum_{\ell=0}^{m} \sigma^{\rm{inv}}(P^{\ell}) \cdot %P^{m-\ell}=P^m+(1+P)P^{m-1}+P \cdot P^{m-2}=0.}$
%$\begin{array}{lcl}
%(\sigma^{\rm{inv}} * {\rm{Id}})(P^m) &=& \dis{\sum_{\ell=0}^{m} \sigma^{\rm{inv}}(P^{\ell}) \cdot P^{m-\ell}}\\
%&=&\dis{P^m+ \sigma^{\rm{inv}}(P^m)+ \sum_{\ell=1}^{m-1}  \sigma^{\rm{inv}}(P^{\ell}) \cdot P^{m-\ell}}\\
%&=&\dis{P^m + 0 + \sigma^{\rm{inv}}(P) \cdot P^{m-1}+\sigma^{\rm{inv}}(P^{2}) \cdot P^{m-2}}\\
%&=&P^m+(1+P)P^{m-1}+P \cdot P^{m-2}\\
%&=&0.
%\end{array}$\\
\end{proof}
\begin{corollary}
If $A$ is special and perfect, then $$\dis{\sum_{D \mid A, A \not= A,1, \text{$\frac{A}{D}$ square-free}} \sigma^{\rm{inv}}(D) = 1 + rad(A)}.$$
\end{corollary}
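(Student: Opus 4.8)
The plan is to deduce this corollary from the convolution values computed in the preceding lemma, following the same template used for every other corollary in this section: evaluate $\sigma^{\rm{inv}} * \mu$ at $A$ in two ways and match the results.

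First I would record the structural input. Saying that $A$ is special and perfect means $A = S^2$ with $S$ square-free; writing $S = P_1 \cdots P_k$ as a product of distinct irreducibles gives $A = P_1^2 \cdots P_k^2$, so every prime factor of $A$ occurs with exponent exactly $2$, and $rad(A) = P_1 \cdots P_k = S$. This is the only feature of $A$ that the argument uses.

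Next I would evaluate $(\sigma^{\rm{inv}} * \mu)(A)$ two ways. On one hand, $\sigma^{\rm{inv}} * \mu$ is multiplicative by Lemma \ref{convmultip}, and the preceding lemma gives $(\sigma^{\rm{inv}} * \mu)(P^2) = 1$ for each irreducible $P$; applying multiplicativity across $A = \prod_i P_i^2$ yields $(\sigma^{\rm{inv}} * \mu)(A) = 1$. On the other hand, expanding the convolution by definition and using that $\mu(A/D) = 1$ exactly when $A/D$ is square-free (and $0$ otherwise) gives $(\sigma^{\rm{inv}} * \mu)(A) = \sum_{D \mid A,\ A/D \text{ square-free}} \sigma^{\rm{inv}}(D)$. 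Then I would isolate the two boundary divisors. For $D = A$ the quotient $A/D = 1$ is square-free, so this term survives and equals $\sigma^{\rm{inv}}(A)$; since $\sigma^{\rm{inv}}$ is multiplicative with $\sigma^{\rm{inv}}(P^2) = P$ (Subsection \ref{invsigma}), I get $\sigma^{\rm{inv}}(A) = \prod_i P_i = rad(A)$. For $D = 1$ the quotient is $A = S^2$, which is not square-free because $S$ has positive degree, so this term is annihilated by $\mu(A) = 0$ and does not appear. Combining the two evaluations gives $1 = rad(A) + \sum_{D \mid A,\ D \neq 1, A,\ A/D \text{ square-free}} \sigma^{\rm{inv}}(D)$, and rearranging (using $-1 = 1$ in $\F_2$, so $1 - rad(A) = 1 + rad(A)$) produces the stated identity.

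There is no genuinely hard step here; the computation is routine once the convolution lemma is in hand. The single point requiring care is the asymmetry of the two endpoints: one must notice that the $D = 1$ term drops out precisely because $A$ is a nontrivial square (hence not square-free), while the $D = A$ term is retained and is exactly what contributes the $rad(A)$ appearing on the right-hand side.
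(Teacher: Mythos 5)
Your proof is correct and follows exactly the paper's route: the paper's own (very terse) proof cites precisely the two facts $(\sigma^{\rm{inv}} * \mu)(A) = 1$ and $\sigma^{\rm{inv}}(A) = rad(A)$, and your argument is the fully spelled-out version of that, including the correct observation that the $D=1$ term vanishes because $\mu(A)=0$ while the $D=A$ term contributes $rad(A)$.
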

\begin{proof}
It follows from the facts: $(\sigma^{\rm{inv}} * \mu)(A)= 1$ and $\sigma^{\rm{inv}}(A) = rad(A)$.
\end{proof}
\subsubsection{The convolution $\sigma^* * \sigma^{\rm{inv}}$}
\begin{lemma} Let $P$ be irreducible and $m \geq 1$. Then $$\text{$(\sigma^* * \sigma^{\rm{inv}})(P^2)= P$ and
$(\sigma^* * \sigma^{\rm{inv}})(P^m)= 0$ if $m \not= 2$.}$$
%$$(\sigma^* * \sigma^{\rm{inv}})(P^m)= \left\{\begin{array}{l}
%P, \text{ if $m=2$}\\
%0 \text{ otherwise.}
%\end{array}
%\right.$$
\end{lemma}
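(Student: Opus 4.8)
The plan is to exploit that both $\sigma^*$ and $\sigma^{\rm{inv}}$ are multiplicative (the latter by Lemma~\ref{definvconv}), so by Lemma~\ref{convmultip} the convolution $\sigma^* * \sigma^{\rm{inv}}$ is again multiplicative and hence, by Lemma~\ref{propertmultip}, is completely determined by its values at prime powers. Thus it suffices to compute $(\sigma^* * \sigma^{\rm{inv}})(P^m) = \sum_{\ell=0}^{m} \sigma^*(P^{\ell})\,\sigma^{\rm{inv}}(P^{m-\ell})$ for an arbitrary irreducible $P$ and each $m \geq 1$.

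First I would record the two ingredient tables. Since the unitary divisors of $P^k$ are exactly $1$ and $P^k$, we have $\sigma^*(1)=1$ and $\sigma^*(P^k)=1+P^k$ for $k \geq 1$. On the other side, Subsection~\ref{invsigma} gives $\sigma^{\rm{inv}}(1)=1$, $\sigma^{\rm{inv}}(P)=1+P$, $\sigma^{\rm{inv}}(P^2)=P$, and $\sigma^{\rm{inv}}(P^m)=0$ for $m \geq 3$. The key consequence of the last line is that the factor $\sigma^{\rm{inv}}(P^{m-\ell})$ vanishes unless $m-\ell \in \{0,1,2\}$, so the defining sum collapses to at most the three terms indexed by $\ell \in \{m,m-1,m-2\}$.

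Then I would split into cases according to where these indices sit relative to the boundary $\ell=0$. For $m \geq 3$ all of $m,m-1,m-2$ are $\geq 1$, so every surviving $\sigma^*$ factor has the form $1+P^k$; expanding $(1+P^m)+(1+P^{m-1})(1+P)+(1+P^{m-2})P$ over $\F_2$ and using $1+1=0$ makes every monomial cancel in pairs, yielding $0$. For $m=2$ the three terms are $\sigma^*(P^2)\cdot 1+\sigma^*(P)\,\sigma^{\rm{inv}}(P)+\sigma^*(1)\,\sigma^{\rm{inv}}(P^2)=(1+P^2)+(1+P)^2+P$, and since $(1+P)^2=1+P^2$ over $\F_2$ this reduces to $P$. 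For $m=1$ only the terms $\ell\in\{1,0\}$ survive, both equal to $1+P$, so their sum is $0$.

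The only genuine subtlety — a pitfall rather than a deep obstacle — is the boundary value $\sigma^*(1)=1$, which must not be read off the formula $\sigma^*(P^k)=1+P^k$ (that would wrongly give $1+P^0=0$). This term is exactly what supplies the surviving $P$ in the case $m=2$, and mishandling it would also corrupt the $m=1$ computation. Everything else is routine characteristic-two cancellation, so once the case split and the truncation of the sum are set up, the result follows directly.
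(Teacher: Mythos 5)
Your proposal is correct and follows essentially the same route as the paper: truncate the convolution sum using $\sigma^{\rm{inv}}(P^k)=0$ for $k\geq 3$, treat $m=1$ and $m=2$ by direct computation, and let characteristic-two cancellation kill the case $m\geq 3$. The only difference is cosmetic — you spell out the small cases that the paper dismisses as ``direct computations'' and flag the boundary value $\sigma^*(1)=1$ explicitly, which the paper handles silently by splitting off the $\ell=0$ and $\ell=m$ terms.
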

\begin{proof}
One has:\\
$\begin{array}{lcl}
(\sigma^* * \sigma^{\rm{inv}})(P^m) &=& \dis{\sum_{\ell=0}^{m} \sigma^*(P^{\ell}) \cdot \sigma^{\rm{inv}}(P^{m-\ell})}\\
&=&\dis{\sigma^{\rm{inv}}(P^m)+ \sigma^*(P^m)+ \sum_{\ell=1}^{m-1} \sigma^*(P^{\ell}) \cdot \sigma^{\rm{inv}}(P^{m-\ell})}.
%&=&\dis{P+\cdots + P^{m-1}+ \frac{1}{1+P} \sum_{\ell=1}^{m-1}   (1+P^{\ell}) (1+P^{m-\ell+1})}\\
%&=&\dis{\frac{P}{1+P} \cdot (1+P^{m-1}) +  \frac{1}{1+P} \sum_{\ell=1}^{m-1}   (1+P^{\ell}) (1+P^{m-\ell+1})}\\
%&=&\dis{\frac{1}{1+P} \cdot [P+P^m+ \sum_{\ell=1}^{m-1} (1+P^{\ell} +P^{m-\ell+1} + P^{m+1})]}\\
%&=&\dis{\frac{1}{1+P} \cdot (m-1)(1+P^{m+1})}\\
%&=&(m-1) \sigma(P^m).
\end{array}$\\
Recall that  $\sigma^{\rm{inv}}(P) =1+P$,  $\sigma^{\rm{inv}}(P^2) =P$  and $\sigma^{\rm{inv}}(P^m) =0$ if $m \geq 3$.\\
If $m \in \{1,2\}$, then we get our results by direct computations. \\
For $m\geq 3$,
$\sigma^{\rm{inv}}(P^m) = 0$ and $\sigma^{\rm{inv}}(P^{m-\ell}) =0$, if $m-\ell \geq 3$.\\
Therefore,\\
$\begin{array}{lcl}
(\sigma^* * \sigma^{\rm{inv}})(P^m) &=&\dis{1+P^m + \sum_{\ell=m-2}^{m-1} (1+P^{\ell}) \cdot
\sigma^{\rm{inv}}(P^{m-\ell})}\\
&=&\dis{1+P^m+ (1+P^{m-2}) \cdot P + (1+P^{m-1}) \cdot (1+P)}\\
%&=&\dis{\sigma(P^m) + \sum_{s=0}^{r-1} (1+P^{2r-2s}) \cdot  P^{s}}\\
%&=&\dis{\sigma(P^{2r}) + \sum_{s=0}^{r-1} (P^s+P^{2r-s})}\\
&=&0.
\end{array}$\\
\end{proof}
\subsection{The Dirichlet inverse of $\sigma^*$} \label{invsigmastar}
We denote by $\sigma^{*\rm{inv}}$ the Dirichlet inverse of $\sigma^*$.
\begin{lemma} \label{express-invsigma}
Let $P$ be irreducible and $r \in \N$. Then
$$\text{$\sigma^{*\rm{inv}}(P^{2r})=0$ and $\sigma^{*\rm{inv}}(P^{2r+1})=P^{r}(1+P)$.}$$
%ii) If $P^{2m} \|A$, with $P$ irreducible and $m \geq 1$, then $\sigma^{*\rm{inv}}(A)=0$.
\end{lemma}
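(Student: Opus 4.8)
The plan is to reduce the whole computation to a single formal power series identity, exploiting that Dirichlet convolution restricted to the powers of one fixed prime is nothing but the Cauchy product. Fix an irreducible $P$ and, for a multiplicative $f$, attach to it the one-variable generating series $B_f(t)=\sum_{m\ge 0} f(P^m)\,t^m \in \F_2[x][[t]]$. Since $(f*g)(P^m)=\sum_{\ell=0}^m f(P^\ell)g(P^{m-\ell})$ is exactly the coefficient of $t^m$ in $B_f(t)B_g(t)$, the assignment $f\mapsto B_f$ turns convolution into multiplication, and $B_\delta(t)=1$. By Lemma \ref{definvconv} we therefore have $B_{\sigma^{*\mathrm{inv}}}(t)=1/B_{\sigma^*}(t)$, the reciprocal being legitimate because $B_{\sigma^*}$ has constant term $\sigma^*(1)=1$.

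Next I would compute $B_{\sigma^*}$ in closed form. Using $\sigma^*(P^0)=1$ and $\sigma^*(P^m)=1+P^m$ for $m\ge 1$, I split $B_{\sigma^*}(t)=1+\sum_{m\ge1}t^m+\sum_{m\ge1}P^mt^m$ and sum the two geometric series in $\F_2[x][[t]]$. In characteristic $2$ the first part collapses, giving $B_{\sigma^*}(t)=\frac{1}{1+t}+\frac{Pt}{1+Pt}$; putting the two terms over the common denominator $(1+t)(1+Pt)$, the cross terms $Pt+Pt$ cancel and I obtain the clean formula $B_{\sigma^*}(t)=\frac{1+Pt^2}{(1+t)(1+Pt)}$. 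Inverting is then immediate: $B_{\sigma^{*\mathrm{inv}}}(t)=\frac{(1+t)(1+Pt)}{1+Pt^2}=\bigl(1+(1+P)t+Pt^2\bigr)\sum_{k\ge0}P^k t^{2k}$.

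Finally I would read off the coefficients of this product. The factor $\sum_{k\ge0}P^kt^{2k}$ supplies only even powers of $t$, so the coefficient of $t^{2r+1}$ comes solely from the middle term $(1+P)t$, namely $(1+P)P^r$, which is the asserted value $P^r(1+P)$. The coefficient of $t^{2r}$ with $r\ge1$ receives exactly two contributions, $P^r$ from the leading $1$ (at $k=r$) and $P\cdot P^{r-1}=P^r$ from $Pt^2$ (at $k=r-1$); these are equal and cancel in characteristic $2$, giving $0$. The residual case $r=0$ is just the normalization $\sigma^{*\mathrm{inv}}(1)=1$, so the even statement is read for $2r\ge 2$.

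I expect the only real friction to be the power-series bookkeeping: confirming that the reciprocal genuinely has coefficients in $\F_2[x]$ rather than merely in $\F_2(x)$ — which the final explicit expansion makes manifest — and keeping the parity split honest at the boundary $r=0$. As a fully elementary alternative that avoids generating functions, one can instead run strong induction on $m$ directly from the recursion $\sigma^{*\mathrm{inv}}(P^m)=\sum_{\ell=0}^{m-1}\sigma^{*\mathrm{inv}}(P^\ell)\,\sigma^*(P^{m-\ell})$, substituting the inductive values, separating even and odd $\ell$, and collapsing the resulting geometric sums $\sum_i P^i=(1+P^{2r})/(1+P)$; there the obstacle is purely the careful evaluation of those sums in characteristic $2$.
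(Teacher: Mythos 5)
Your proof is correct, but it takes a genuinely different route from the paper. The paper proves the lemma by strong induction on $r$, plugging the inductive values into the defining recursion $\sigma^{*\rm{inv}}(P^{m})=\sum_{\ell=0}^{m-1}\sigma^{*\rm{inv}}(P^{\ell})\,\sigma^{*}(P^{m-\ell})$ and collapsing the resulting geometric sums in characteristic $2$ --- exactly the ``elementary alternative'' you sketch at the end. Your main argument instead packages the whole prime-power data into the generating series $B_f(t)=\sum_{m\ge 0}f(P^m)t^m\in\F_2[x][[t]]$, uses that Dirichlet convolution at powers of a single prime is the Cauchy product so that $B_{\sigma^{*\rm{inv}}}=1/B_{\sigma^*}$, and then inverts the closed form $B_{\sigma^*}(t)=\frac{1+Pt^2}{(1+t)(1+Pt)}$ to get $B_{\sigma^{*\rm{inv}}}(t)=\bigl(1+(1+P)t+Pt^2\bigr)\sum_{k\ge 0}P^kt^{2k}$; all the algebra here checks out ($1+\frac{t}{1+t}=\frac{1}{1+t}$, the cross terms $Pt+Pt$ cancel, and the coefficient extraction is right). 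What your approach buys: it produces all values at once with no induction, it makes the cancellation pattern conceptually transparent (it is just the factorization of a rational function), and the same template immediately gives the other inverse computations in the paper (e.g.\ $\sigma^{\rm{inv}}$, ${\rm{Id}}^{\rm{inv}}$) from $B_\sigma$ and $B_{\rm{Id}}$. What the paper's induction buys is self-containedness: no power-series formalism, only the recursion from Lemma \ref{definvconv}. One point where you are actually more careful than the paper: the even case at $r=0$ reads $\sigma^{*\rm{inv}}(1)=0$, which is false since $\sigma^{*\rm{inv}}(1)=1$; you correctly restrict the even claim to $2r\ge 2$, whereas the paper dismisses $r=0$ as ``trivial'' without noting this boundary defect in the statement.
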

\begin{proof}
%By definition, $\sigma^{*\rm{inv}}(P)=\sigma^*(P) =1+P$.
We prove the statement by induction on $r$. The case $r=0$ is trivial. For $0 \leq t \leq r-1$, suppose that
$\text{$\sigma^{*\rm{inv}}(P^{2t})=0$ and $\sigma^{*\rm{inv}}(P^{2t+1})=P^{t}(1+P)$.}$ \\
One has:
%$\sigma^{*\rm{inv}}(P^{2r})= \sigma^{*}(P^{2r}) + \sigma^{*\rm{inv}}(P)  \sigma^{*}(P) =
%1+P^2+(1+P)(1+P) = 0.$\\
%$\sigma^{*\rm{inv}}(P^{2m+1})=\sigma^{*}(P^{3}) + \sigma^{*\rm{inv}}(P)  \sigma^{*}(P^2) + 0 =
%1+P^3+(1+P)(1+P^2) = P(1+P).$\\
%Now, if $m \geq 2$, assume that $\sigma^{*\rm{inv}}(P^{2t})=0$,  $\sigma^{*\rm{inv}}(P^{2t+1})=P^{t}(1+P)$, for $1 %\leq t \leq m-1$. We get:\\
$$\begin{array}{lcl}
\sigma^{*\rm{inv}}(P^{2r})&=&\dis{\sigma^{*}(P^{2r}) + \sigma^{*\rm{inv}}(P)  \sigma^{*}(P^{2r-1}) + \sum_{t =1}^{r-1}
 \sigma^{*\rm{inv}}(P^{2t +1})  \sigma^{*}(P^{2r-2t-1})}\\
&=& \dis{1+P^{2r}+ (1+P)(1+P^{2r-1})+\sum_{t =1}^{r-1} P^{t}(1+P) (1+P^{2r-2t-1})}\\
&=& P+P^{2r-1}+(1+P)(P+P^2+\cdots+P^r+P^{r+1}+\cdots+P^{2r-2})\\
&=& P+P^{2r-1}+ P(1+P^{2r-2}) = 0.
\end{array}$$
Now, \\
$\begin{array}{lcl}
\sigma^{*\rm{inv}}(P^{2r+1})&=&\dis{\sigma^{*}(P^{2r+1}) + \sigma^{*\rm{inv}}(P)  \sigma^{*}(P^{2r}) + \sum_{t =1}^{r-1}
 \sigma^{*\rm{inv}}(P^{2t +1})  \sigma^{*}(P^{2r-2t})}\\
&=&\dis{1+P^{2r+1}+ (1+P)(1+P^{2r})+\sum_{t =1}^{r-1} P^{t}(1+P) (1+P^{2r-2t})}\\
&=&\dis{P+P^{2r}+(1+P)\sum_{t =1}^{r-1} (P^{t}+P^{2r-t})}\\
&=& P+P^{2r}+(1+P)(P+\cdots+P^{r-1}+P^{r+1}+\cdots+P^{2r-1})\\
&=& P+P^{2r}+ P(1+P^{2r-1}) +(1+P)P^r = 0 + (1+P)P^r.
\end{array}$\\
%ii) follows from the  multiplicativity of $\sigma^{*\rm{inv}}$.
\end{proof}
%We get (with similar proof as above):
%\begin{corollary} \label{perfectcondition}
%If $A$ is unitary perfect, then
%$\dis{A = \sigma^{*\rm{inv}}(A) + \sum_{D \mid A, D \not=A,1} \sigma^{*\rm{inv}}(D) \cdot \sigma^*(\frac{A}{D}).}$
%\end{corollary}
%\begin{proof}
%It follows from the facts: $(\sigma^{*\rm{inv}} * \sigma^*)(A) = \delta(A) = 0$ and $\sigma^*(A)=A$.
%\end{proof}
\subsubsection{The convolution $\sigma^{*\rm{inv}} * z = (\sigma^* * \mu)^{\rm{inv}}$}
\begin{lemma} Let $P$ be irreducible and $r \in \N$. Then
$$\text{$(\sigma^{*\rm{inv}} * z)(P^{2r})=P^r$ and $(\sigma^{*\rm{inv}} * z)(P^{2r+1}) =P^{r+1}$.}$$
\end{lemma}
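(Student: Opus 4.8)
The plan is to exploit that $z$ is the constant function $z\equiv 1$, which collapses the convolution at a prime power into a bare partial sum of the values of $\sigma^{*\rm{inv}}$. Concretely, for every $m\ge 0$,
$$(\sigma^{*\rm{inv}} * z)(P^{m}) = \sum_{\ell=0}^{m} \sigma^{*\rm{inv}}(P^{\ell})\, z(P^{m-\ell}) = \sum_{\ell=0}^{m} \sigma^{*\rm{inv}}(P^{\ell}),$$
so the entire argument reduces to evaluating this sum by means of the explicit values established in the preceding lemma, namely $\sigma^{*\rm{inv}}(P^{0})=1$, $\sigma^{*\rm{inv}}(P^{2t})=0$ for $t\ge 1$, and $\sigma^{*\rm{inv}}(P^{2t+1})=P^{t}(1+P)$.

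For the even case $m=2r$ I would split the sum into its even- and odd-indexed parts. All even-indexed terms vanish except the $\ell=0$ term, which contributes $1$; the odd-indexed part is $\sum_{t=0}^{r-1} P^{t}(1+P)=(1+P)(1+P+\cdots+P^{r-1})$, which collapses to $1+P^{r}$ over $\F_2$. Adding the lone $1$ coming from $\ell=0$ then gives $1+(1+P^{r})=P^{r}$, as claimed. The odd case $m=2r+1$ runs identically: the even-indexed contribution is again just the $1$ from $\ell=0$, while $\sum_{t=0}^{r} P^{t}(1+P)=(1+P)(1+\cdots+P^{r})=1+P^{r+1}$, so the total is $1+(1+P^{r+1})=P^{r+1}$.

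An alternative route, matching the section heading, is to read it as an identity of inverses: by Lemma \ref{propert-invconv} together with $\mu^{\rm{inv}}=z$ (Examples \ref{convexamples2}) one has $\sigma^{*\rm{inv}} * z = \sigma^{*\rm{inv}} * \mu^{\rm{inv}} = (\sigma^{*} * \mu)^{\rm{inv}}$, and one could instead invert the function $\sigma^{*} * \mu$ already computed in Lemma \ref{convsigmastarmu}. I prefer the direct sum above, since it is self-contained and avoids setting up a fresh recursion for the inverse.

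The only point demanding care is the characteristic-$2$ bookkeeping: the naive even-case formula $\sigma^{*\rm{inv}}(P^{2r})=0$ must not be applied at $r=0$, because $\sigma^{*\rm{inv}}(P^{0})=\sigma^{*\rm{inv}}(1)=1$ by the normalization $f^{\rm{inv}}(1)=1$ from Lemma \ref{definvconv}. It is exactly this extra $1$ that combines with the $1$ produced by the telescoping of the odd part to leave the clean values $P^{r}$ and $P^{r+1}$; dropping it would shift both answers by $1$. Everything else is the routine cancellation $(1+P)(1+P+\cdots+P^{k})=1+P^{k+1}$ valid over $\F_2$.
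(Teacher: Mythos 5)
Your proof is correct and follows essentially the same route as the paper's: both evaluate the convolution directly from the explicit values $\sigma^{*\rm{inv}}(P^{2t})=0$, $\sigma^{*\rm{inv}}(P^{2t+1})=P^{t}(1+P)$ and the characteristic-$2$ telescoping $(1+P)(1+P+\cdots+P^{k})=1+P^{k+1}$. The only cosmetic difference is that the paper checks $m=1,2,3$ separately before treating $r\geq 2$, whereas your split into even- and odd-indexed terms (with the correct attention to $\sigma^{*\rm{inv}}(1)=1$) handles all $r$ uniformly.
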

\begin{proof} Recall that  $\sigma^{*\rm{inv}}(P^{2t}) =0$ and $\sigma^{*\rm{inv}}(P^{2t+1}) = P^{t}(1+P)$. One has $(\sigma^{*\rm{inv}} * z)(P) = \sigma^{*\rm{inv}}(P) + z(P) = 1+P + 1 = P$,
$(\sigma^{*\rm{inv}} * z)(P^2) = \sigma^{*\rm{inv}}(P^2) + z(P^2) + \sigma^{*\rm{inv}}(P) \cdot z(P)=0+1+(1+P) \cdot 1= P$ and $(\sigma^{*\rm{inv}} * z)(P^3) = \sigma^{*\rm{inv}}(P^3) + z(P^3) + 0+ \sigma^{*\rm{inv}}(P) \cdot z(P^2)=P(1+P)+1+(1+P) \cdot 1= P^2$.\\
For $r \geq 2$,\\
%It suffices to consider the case where $A = P^m$, with $P$ irreducible. We claim that
%$(\sigma * \phi)(P^m) = 0$, if $m$ is odd and $(\sigma * \phi)(P^m) = P^m$, if $m$ is even.
$\begin{array}{lcl}
(\sigma^{*\rm{inv}} * z)(P^{2r})& =& \sigma^{*\rm{inv}}(P^{2r}) + z(P^{2r}) + \dis{\sum_{t=0}^{r-1} \sigma^{*\rm{inv}}(P^{2t+1}) \cdot 1}\\
&=&0+1+ \dis{\sum_{t=0}^{r-1} P^t(1+P) = 1+1+P^r = P^r.}
\end{array}$
%$\begin{array}{lcl}
%(\sigma^{*\rm{inv}} * {\rm{Id}})(P^{2r}) &=& \dis{\sum_{\ell=0}^{2r} \sigma^{*\rm{inv}}(P^{\ell}) \cdot %P^{m-\ell}}\\
%&=&\dis{P^{2r}+ 0+ \sum_{t=1}^{r}  \sigma^{*\rm{inv}}(P^{2t-1}) \cdot P^{2r-2t+1}}\\
%&=&\dis{P^{2r} + (1+P) \cdot \sum_{t=1}^{r} P^{2r-t}}\\
%&=&P^{2r}+P^r (1+P^r)\\
%&=&P^r.
%\end{array}$\\
\\
$(\sigma^{*\rm{inv}} * z)(P^{2r+1}) = P^r(1+P)+1+\dis{\sum_{t=0}^{r-1} P^t(1+P) \cdot 1 = P^{r+1}}$.
%\dis{\sum_{t=0}^{r-1} \sigma^{*\rm{inv}}(P^{t}) z(P^{2r+1-\ell}) = P^{r} + P^{r}+P^{2r+1} + (1+P^{r})P^r.}$
%$\begin{array}{lcl}
%(\sigma^{*\rm{inv}} * {\rm{Id}})(P^{2r-1}) &=& \dis{\sum_{\ell=0}^{2r-1} \sigma^{*\rm{inv}}(P^{\ell}) \cdot %P^{m-\ell}}\\
%&=&\dis{P^{2r-1}+ P^{r-1}(1+P)+ \sum_{t=1}^{r-1}  \sigma^{*\rm{inv}}(P^{2t-1}) \cdot P^{2r-2t}}\\
%&=&\dis{P^{r-1} + P^{r}+P^{2r-1} +  P^r \cdot (1+P^{r-1})}\\
%&=&P^{r-1}.
%\end{array}$\\
\end{proof}
%\begin{corollary}
%If $A=S^2$, then $(\sigma^{*\rm{inv}} * z)(A)=S$.
%$$\text{$\dis{A = rad(A)+ \sum_{D \mid A, D \not= A,1, D \text{square-free}} \sigma^{*\rm{inv}}(D) \cdot %\frac{A}{D}}$.}$$
%\end{corollary}
\subsubsection{The convolution $\sigma^{*\rm{inv}} * {\rm{Id}}$}
\begin{lemma} Let $P$ be irreducible and $r \in \N$. Then
$$\text{$(\sigma^{*\rm{inv}} * {\rm{Id}})(P^{2r})=P^r$ and $(\sigma^{*\rm{inv}} * {\rm{Id}})(P^{2r+1})=P^{r}$.}$$
\end{lemma}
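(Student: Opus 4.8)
The plan is to evaluate the convolution directly from its definition, using the closed-form values of $\sigma^{*\rm{inv}}$ established just above, rather than running a fresh induction. By definition,
$$(\sigma^{*\rm{inv}} * {\rm{Id}})(P^m) = \dis{\sum_{\ell=0}^{m} \sigma^{*\rm{inv}}(P^{\ell}) \cdot P^{m-\ell}}.$$
The key simplification is that $\sigma^{*\rm{inv}}(P^{\ell})$ vanishes on every \emph{positive} even exponent, so only two kinds of terms survive: the term $\ell = 0$, which contributes $\sigma^{*\rm{inv}}(1)\cdot P^m = P^m$, and the odd terms $\ell = 2t+1$, for which $\sigma^{*\rm{inv}}(P^{2t+1}) = P^t(1+P)$.

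First I would isolate the $\ell=0$ term and rewrite the remaining sum over odd indices only. Writing $\ell = 2t+1$ and simplifying $P^t\cdot P^{m-2t-1} = P^{m-t-1}$, the surviving part becomes $(1+P)\dis{\sum_t P^{m-t-1}}$, where $t$ runs from $0$ to $r-1$ when $m=2r$ and from $0$ to $r$ when $m=2r+1$. In each case the exponents $m-t-1$ form a consecutive block of powers of $P$.

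The crucial computational tool is the characteristic-two telescoping identity $(1+P)(1+P+\cdots+P^{k-1}) = 1+P^k$, which collapses each geometric block. For $m=2r$ this gives $(1+P)(P^r+\cdots+P^{2r-1}) = P^r(1+P^r) = P^r+P^{2r}$, and for $m=2r+1$ it gives $P^r+P^{2r+1}$. Adding back the isolated term $P^m$ and using $P^m+P^m=0$ in $\F_2$, both parities collapse to the same value $P^r$, which is exactly the claim. (One could instead derive the even case from the already-proven equality $(\sigma^{*\rm{inv}} * z)(P^{2r})=P^r$, or route through ${\rm{Id}}=\phi*z$ to write $\sigma^{*\rm{inv}}*{\rm{Id}}=(\sigma^{*\rm{inv}}*z)*\phi$, but each of these merely trades the present sum for a comparable one, so the direct evaluation is cleanest.)

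The step I expect to require the most care is the bookkeeping around the $\ell=0$ term: the formula $\sigma^{*\rm{inv}}(P^{2r})=0$ is valid only for $r\geq 1$, whereas $\sigma^{*\rm{inv}}(P^0)=\sigma^{*\rm{inv}}(1)=1$, so this single even term must be pulled out by hand before the vanishing on even exponents is invoked. Once it is correctly accounted for, it is precisely the summand that cancels the leading $P^{2r}$ (resp. $P^{2r+1}$) produced by the telescoping; omitting it would give the wrong answer. Everything else is routine manipulation of finite geometric sums over $\F_2$.
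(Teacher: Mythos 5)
Your proposal is correct and takes essentially the same route as the paper: both substitute the closed form $\sigma^{*\rm{inv}}(P^{2t})=0$ (for $t\geq 1$), $\sigma^{*\rm{inv}}(P^{2t+1})=P^{t}(1+P)$ into the convolution sum, isolate the $\ell=0$ term $P^m$, and collapse the remaining block of consecutive powers via $(1+P)(1+P+\cdots+P^{k-1})=1+P^{k}$ over $\F_2$. The only cosmetic difference is that in the odd case the paper pulls out the $t=r$ summand separately while you keep it inside the sum; your attention to the fact that $\sigma^{*\rm{inv}}(P^{0})=1$ must be excluded from the vanishing even terms is exactly the point where the paper's displayed computation also accounts for it.
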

\begin{proof} %It suffices to consider the case where $A = P^m$, with $P$ irreducible. We claim that
%$(\sigma * \phi)(P^m) = 0$, if $m$ is odd and $(\sigma * \phi)(P^m) = P^m$, if $m$ is even.
%Recall that  $\sigma^{*\rm{inv}}(P^{2t}) =0$  and $\sigma^{*\rm{inv}}(P^{2t+1}) = P^{t}(1+P)$.
One has:\\
$(\sigma^{*\rm{inv}} * {\rm{Id}})(P^{2r}) = 0 +P^{2r}+ \dis{\sum_{t=0}^{r-1} \sigma^{*\rm{inv}}(P^{2t+1}) P^{2r-2t-1} = P^{2r} + (1+P) \sum_{t=0}^{r-1}  P^{2r-t-1} = P^r}.$
%$\begin{array}{lcl}
%(\sigma^{*\rm{inv}} * {\rm{Id}})(P^{2r}) &=& \dis{\sum_{\ell=0}^{2r} \sigma^{*\rm{inv}}(P^{\ell}) \cdot %P^{m-\ell}}\\
%&=&\dis{P^{2r}+ 0+ \sum_{t=1}^{r}  \sigma^{*\rm{inv}}(P^{2t-1}) \cdot P^{2r-2t+1}}\\
%&=&\dis{P^{2r} + (1+P) \cdot \sum_{t=1}^{r} P^{2r-t}}\\
%&=&P^{2r}+P^r (1+P^r)\\
%&=&P^r.
%\end{array}$\\
\\
$(\sigma^{*\rm{inv}} * {\rm{Id}})(P^{2r+1}) = \dis{P^r(1+P) + P^{2r+1} + (1+P) \sum_{t=0}^{r-1}  P^{2r-t} = P^r}$.
%\dis{\sum_{t=0}^{r-1} \sigma^{*\rm{inv}}(P^{\ell}) P^{2r+1-\ell} = P^{r} + P^{r}+P^{2r+1} + (1+P^{r})P^r.}$
%$\begin{array}{lcl}
%(\sigma^{*\rm{inv}} * {\rm{Id}})(P^{2r-1}) &=& \dis{\sum_{\ell=0}^{2r-1} \sigma^{*\rm{inv}}(P^{\ell}) \cdot %P^{m-\ell}}\\
%&=&\dis{P^{2r-1}+ P^{r-1}(1+P)+ \sum_{t=1}^{r-1}  \sigma^{*\rm{inv}}(P^{2t-1}) \cdot P^{2r-2t}}\\
%&=&\dis{P^{r-1} + P^{r}+P^{2r-1} +  P^r \cdot (1+P^{r-1})}\\
%&=&P^{r-1}.
%\end{array}$\\
\end{proof}
\begin{corollary}
If $A$ is special and perfect, then
$$\text{$\dis{A = rad(A)+ \sum_{D \mid A, D \not= A,1, D \text{square-free}} \sigma^{*\rm{inv}}(D) \cdot \frac{A}{D}}$.}$$
\end{corollary}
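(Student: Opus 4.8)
The plan is to evaluate $(\sigma^{*\rm{inv}} * {\rm{Id}})(A)$ in two ways and compare. Since $A$ is special and perfect, the only structural consequence I need is $A = S^2$ with $S = P_1 \cdots P_k$ square-free; thus $A = P_1^2 \cdots P_k^2$, each $P_i$ occurring to exponent $2$, and $rad(A) = P_1 \cdots P_k = S$.

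For the first evaluation I would use that $\sigma^{*\rm{inv}} * {\rm{Id}}$ is multiplicative (Lemma \ref{convmultip}) together with the value $(\sigma^{*\rm{inv}} * {\rm{Id}})(P^2) = P$ furnished by the preceding lemma (its $r = 1$ case). This gives at once $(\sigma^{*\rm{inv}} * {\rm{Id}})(A) = \prod_{i=1}^{k}(\sigma^{*\rm{inv}} * {\rm{Id}})(P_i^2) = \prod_{i=1}^{k} P_i = rad(A)$.

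For the second evaluation I would expand the convolution by definition, writing $(\sigma^{*\rm{inv}} * {\rm{Id}})(A) = \sum_{D \mid A} \sigma^{*\rm{inv}}(D) \cdot (A/D)$, and then prune the vanishing terms. The decisive observation is that $\sigma^{*\rm{inv}}$ is multiplicative with $\sigma^{*\rm{inv}}(P^2) = 0$ for every irreducible $P$; hence any divisor $D$ of $A = \prod P_i^2$ that is divisible by some $P_i^2$ contributes $\sigma^{*\rm{inv}}(D) = 0$. Since each $P_i$ occurs to exponent exactly $2$ in $A$, the surviving divisors are precisely the square-free ones, and the sum collapses to $\sum_{D \mid A,\ D \text{ square-free}} \sigma^{*\rm{inv}}(D) \cdot (A/D)$.

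Finally I would separate off the boundary term $D = 1$, which contributes $\sigma^{*\rm{inv}}(1) \cdot A = A$ (recall $\sigma^{*\rm{inv}}(1) = 1$ by definition of the Dirichlet inverse), whereas $D = A$ never appears because $A = S^2$ is not square-free. Matching the two evaluations gives $rad(A) = A + \sum_{D \mid A,\ D \neq 1,\ D \text{ square-free}} \sigma^{*\rm{inv}}(D) \cdot (A/D)$, and over $\F_2$, where addition coincides with subtraction, this is exactly the claimed identity. I expect the only genuinely delicate step to be the vanishing argument that restricts the sum to square-free $D$; the treatment of the two boundary divisors $D = 1$ and $D = A$ is then routine bookkeeping.
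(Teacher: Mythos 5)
Your proof is correct and takes essentially the same route as the paper: the paper's one-line proof rests on exactly the identity $(\sigma^{*\rm{inv}} * {\rm{Id}})(A) = rad(A)$, which you establish through multiplicativity and the $r=1$ case of the preceding lemma. Your expansion of the convolution, the vanishing of $\sigma^{*\rm{inv}}$ on non-square-free divisors (since $\sigma^{*\rm{inv}}(P^2)=0$), and the handling of the boundary terms $D=1$ and $D=A$ are precisely the bookkeeping the paper leaves implicit.
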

\begin{proof}
It follows from the fact: $(\sigma^{*\rm{inv}} * {\rm{Id}})(A)= rad(A)$.
\end{proof}
\subsubsection{The convolution $\sigma^{*\rm{inv}} * \mu = (\sigma^* * z)^{\rm{inv}}$}
\begin{lemma} Let $P$ be irreducible and $r \geq 1$. Then
%$$\text{$(\sigma^{*\rm{inv}} * \mu)(P)=P$ and $(\sigma^{*\rm{inv}} * \mu)(P^2)=P+1$,}$$
$$\text{$(\sigma^{*\rm{inv}} * \mu)(P^{2r}) = P^{r-1}(1+P)$ and $(\sigma^{*\rm{inv}} * \mu)(P^{2r+1})=P^{r} \cdot (1+P)$.}$$
\end{lemma}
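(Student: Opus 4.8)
The plan is to compute $(\sigma^{*\rm{inv}} * \mu)(P^m)$ straight from the definition of the Dirichlet convolution, exploiting the fact that $\mu$ is supported only on $1$ and $P$. Recall that $\mu(1)=\mu(P)=1$ while $\mu(P^j)=0$ for every $j\geq 2$. Hence in the expansion
$$(\sigma^{*\rm{inv}} * \mu)(P^m) = \sum_{\ell=0}^{m} \sigma^{*\rm{inv}}(P^{\ell}) \, \mu(P^{m-\ell}),$$
only the two terms with $m-\ell\in\{0,1\}$ survive, collapsing the sum to
$$(\sigma^{*\rm{inv}} * \mu)(P^m) = \sigma^{*\rm{inv}}(P^m) + \sigma^{*\rm{inv}}(P^{m-1})$$
for every $m\geq 1$. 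This reduction is the whole structural content of the argument.

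Next I would substitute the explicit values of $\sigma^{*\rm{inv}}$ already established (the Lemma computing the Dirichlet inverse of $\sigma^*$), namely $\sigma^{*\rm{inv}}(P^{2t})=0$ and $\sigma^{*\rm{inv}}(P^{2t+1})=P^{t}(1+P)$. For the even case $m=2r$ with $r\geq 1$, the term $\sigma^{*\rm{inv}}(P^{2r})$ vanishes while $P^{m-1}=P^{2r-1}=P^{2(r-1)+1}$ is an odd power, so $(\sigma^{*\rm{inv}} * \mu)(P^{2r}) = 0 + P^{r-1}(1+P) = P^{r-1}(1+P)$. For the odd case $m=2r+1$, the surviving term is $\sigma^{*\rm{inv}}(P^{2r+1})=P^{r}(1+P)$, whereas $\sigma^{*\rm{inv}}(P^{2r})=0$, giving $(\sigma^{*\rm{inv}} * \mu)(P^{2r+1}) = P^{r}(1+P)$. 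Both asserted formulas follow at once.

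There is essentially no obstacle here; the only point requiring care is the parity bookkeeping, i.e.\ checking that the predecessor exponent $m-1$ lands in the \emph{opposite} parity class so that the correct branch of the formula for $\sigma^{*\rm{inv}}$ is invoked in each case. As an independent consistency check one can use the alternative identity $\sigma^{*\rm{inv}} * \mu = (\sigma^* * z)^{\rm{inv}}$ indicated in the subsection title: taking the Dirichlet inverse of the values of $\sigma^{*} * z$ recorded in Lemma \ref{convsigmastarz} must reproduce the same output. However, the direct two-term route above is the shortest and is the one I would write up.
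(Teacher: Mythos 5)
Your proof is correct and follows essentially the same route as the paper: both collapse the convolution $(\sigma^{*\rm{inv}} * \mu)(P^m)$ to the two terms $\sigma^{*\rm{inv}}(P^m) + \sigma^{*\rm{inv}}(P^{m-1})$ using $\mu(P^j)=0$ for $j\geq 2$, and then substitute the known values of $\sigma^{*\rm{inv}}$ by parity. The only cosmetic difference is that the paper checks the cases $m=1,2$ separately before treating $r\geq 2$, whereas your parity bookkeeping handles all $r\geq 1$ uniformly.
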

\begin{proof} %It suffices to consider the case where $A = P^m$, with $P$ irreducible. We claim that
%$(\sigma * \phi)(P^m) = 0$, if $m$ is odd and $(\sigma * \phi)(P^m) = P^m$, if $m$ is even.
%Recall that $\sigma^{\rm{inv}}(P) =1+P$,  $\sigma^{\rm{inv}}(P^2) =P$  and $\sigma^{\rm{inv}}(P^m) = %0$ if $m \geq 3$. One has:\\
$(\sigma^{*\rm{inv}} * \mu)(P) =\sigma^{*\rm{inv}}(P) + \mu(P) =1+P+1 = P$.\\
$(\sigma^{*\rm{inv}} * \mu)(P^2) =\sigma^{*\rm{inv}}(P^2) + \mu(P^2) +\sigma^{*\rm{inv}}(P) \cdot \mu(P) =0+0+(1+P) \cdot 1 = 1+P$.\\
For $r \geq 2$, $(\sigma^{*\rm{inv}} * \mu)(P^{2r}) =\sigma^{*\rm{inv}}(P^{2r}) + \mu(P^{2r}) +\sigma^{*\rm{inv}}(P^{2r-1}) \cdot \mu(P) + 0 =0+0+P^{r-1} (1+P) \cdot 1$.\\
$(\sigma^{*\rm{inv}} * \mu)(P^{2r+1}) =\sigma^{*\rm{inv}}(P^{2r+1}) + \mu(P^{2r+1}) + 0 =P^{r} \cdot (1+P) + 0$.
%For $m\geq 3$,\\
%$(\sigma^{\rm{inv}} * {\rm{Id}})(P^m) = \dis{\sum_{\ell=0}^{m} \sigma^{\rm{inv}}(P^{\ell}) \cdot %P^{m-\ell}=P^m+(1+P)P^{m-1}+P \cdot P^{m-2}=0.}$
%$\begin{array}{lcl}
%(\sigma^{\rm{inv}} * {\rm{Id}})(P^m) &=& \dis{\sum_{\ell=0}^{m} \sigma^{\rm{inv}}(P^{\ell}) \cdot P^{m-\ell}}\\
%&=&\dis{P^m+ \sigma^{\rm{inv}}(P^m)+ \sum_{\ell=1}^{m-1}  \sigma^{\rm{inv}}(P^{\ell}) \cdot P^{m-\ell}}\\
%&=&\dis{P^m + 0 + \sigma^{\rm{inv}}(P) \cdot P^{m-1}+\sigma^{\rm{inv}}(P^{2}) \cdot P^{m-2}}\\
%&=&P^m+(1+P)P^{m-1}+P \cdot P^{m-2}\\
%&=&0.
%\end{array}$\\
\end{proof}
\begin{corollary}
If $A$ is special and perfect, then $$\dis{\sum_{D \mid A, A \not= A,1, \text{$\frac{A}{D}$ square-free}} \sigma^{*\rm{inv}}(D) = \sigma(rad(A))}.$$
\end{corollary}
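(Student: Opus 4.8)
The plan is to evaluate the convolution $\sigma^{*\rm{inv}} * \mu$ at $A$ by exploiting its multiplicativity, identify the value with $\sigma(rad(A))$, and then recover the restricted sum by isolating the two boundary divisors $D=1$ and $D=A$. First I would use that $A$ is special and perfect, so $A = S^2$ with $S$ square-free; writing $S = P_1 \cdots P_k$ as a product of distinct irreducibles gives $A = P_1^2 \cdots P_k^2$ and $rad(A) = S$, each irreducible factor of $A$ occurring to the exact power $2$.

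Next, since $\sigma^{*\rm{inv}} * \mu$ is a convolution of multiplicative functions, it is itself multiplicative by Lemma \ref{convmultip}. Applying the preceding lemma at each $P_i^2$ (the case $r=1$) gives $(\sigma^{*\rm{inv}} * \mu)(P_i^2) = P_i^{0}(1+P_i) = 1 + P_i = \sigma(P_i)$, so that $(\sigma^{*\rm{inv}} * \mu)(A) = \prod_{i}(1+P_i) = \sigma(P_1 \cdots P_k) = \sigma(rad(A))$, using that $\sigma$ is multiplicative on the pairwise coprime $P_i$. This is the identity I want to land on.

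The only delicate point is the passage from the full divisor sum to the restricted one. By definition $(\sigma^{*\rm{inv}} * \mu)(A) = \sum_{D \mid A} \sigma^{*\rm{inv}}(D)\,\mu(A/D) = \sum_{D \mid A,\ A/D \text{ square-free}} \sigma^{*\rm{inv}}(D)$, since $\mu$ vanishes off square-free polynomials. The divisor $D = 1$ yields $A/D = A = S^2$, which is not square-free (as $S \neq 1$), so it never enters this sum. The divisor $D = A$ does enter, since $A/A = 1$ is square-free, but its contribution $\sigma^{*\rm{inv}}(A) = \prod_i \sigma^{*\rm{inv}}(P_i^2)$ vanishes because each $\sigma^{*\rm{inv}}(P_i^2) = 0$ by the inverse formula for $\sigma^{*\rm{inv}}$. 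Hence the full square-free-constrained sum coincides with the sum restricted to $D \neq 1, A$, and combined with the previous paragraph this gives $\sum_{D \mid A,\, D \neq 1, A,\, A/D \text{ square-free}} \sigma^{*\rm{inv}}(D) = \sigma(rad(A))$.

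I expect the main obstacle to be purely this boundary bookkeeping rather than any computation: one must check that $D=1$ is automatically excluded by the square-free constraint on $A/D$ (which uses precisely that $A$ is a nontrivial square), and that $D=A$, while formally present, contributes nothing because $\sigma^{*\rm{inv}}(A)=0$. Beyond the two cited lemma-values no further calculation is required, and since we work over $\F_2$, addition and subtraction coincide, so there are no sign subtleties to track.
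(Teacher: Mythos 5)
Your proof is correct and follows essentially the same route as the paper's: evaluate the multiplicative convolution $\sigma^{*\rm{inv}} * \mu$ at $A = P_1^2\cdots P_k^2$ to get $\sigma(rad(A))$, then dispose of the boundary divisors via $\mu(A)=0$ (equivalently, your observation that $A/1$ is not square-free) and $\sigma^{*\rm{inv}}(A)=0$. The paper's proof merely cites these two facts, so your write-up is a faithful, fully detailed expansion of the same argument.
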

\begin{proof}
It follows from the facts: $(\sigma^{*\rm{inv}} * \mu)(A)= \sigma(rad(A))$ and $\sigma^{*\rm{inv}}(A) =0 = \mu(A)$.
\end{proof}
\subsubsection{The convolution $\sigma * \sigma^{\rm{*inv}} = (\sigma^* * \sigma^{\rm{inv}})^{\rm{inv}}$}
\begin{lemma} Let $P$ be irreducible and $r \geq 0$. Then\\
$$\text{$(\sigma * \sigma^{\rm{*inv}})(P^{2r})= P^r$ and $(\sigma * \sigma^{\rm{*inv}})(P^{2r+1})= 0$.}$$
\end{lemma}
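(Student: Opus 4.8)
The plan is to avoid the bare summation and instead exploit the factorization already recorded in the subsection heading, namely $\sigma * \sigma^{*\rm{inv}} = (\sigma^* * \sigma^{\rm{inv}})^{\rm{inv}}$. This identity is immediate from Lemma \ref{propert-invconv}: since $(f*g)^{\rm{inv}} = f^{\rm{inv}} * g^{\rm{inv}}$ and $(h^{\rm{inv}})^{\rm{inv}} = h$, one has $(\sigma^* * \sigma^{\rm{inv}})^{\rm{inv}} = (\sigma^*)^{\rm{inv}} * (\sigma^{\rm{inv}})^{\rm{inv}} = \sigma^{*\rm{inv}} * \sigma = \sigma * \sigma^{*\rm{inv}}$. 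Thus it suffices to invert the function $g := \sigma^* * \sigma^{\rm{inv}}$, whose values at prime powers were computed in the preceding lemma.

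Next I would recall from that lemma that, for $P$ irreducible, $g(P^2) = P$ while $g(P^m) = 0$ for every $m \geq 1$ with $m \neq 2$, and of course $g(1) = 1$ by multiplicativity. Writing $h := g^{\rm{inv}}$ and invoking the recursive definition of the Dirichlet inverse from Lemma \ref{definvconv}, I have $h(1) = 1$, $h(P) = g(P) = 0$, and for $m \geq 2$,
$$h(P^m) = \sum_{\ell=0}^{m-1} h(P^{\ell}) \, g(P^{m-\ell}).$$
Here the factor $g(P^{m-\ell})$ vanishes unless $m - \ell = 2$, because $m - \ell \geq 1$ and $g$ is supported on positive exponents only at the exponent $2$. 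Hence the sole surviving term is $\ell = m-2$, and the recursion collapses to $h(P^m) = g(P^2)\,h(P^{m-2}) = P \cdot h(P^{m-2})$.

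Finally I would run this two-step recursion from the base values $h(P^0) = 1$ and $h(P^1) = 0$. A trivial induction then gives $h(P^{2r}) = P \cdot h(P^{2r-2}) = \cdots = P^r \cdot h(1) = P^r$ and $h(P^{2r+1}) = P^r \cdot h(P) = 0$, which is exactly the claim. I do not anticipate any real obstacle: the entire point is that inverting $g$ is easy precisely because $g$ is supported on a single positive exponent. The one spot demanding care is the bookkeeping of the base cases, in particular that $h(P^0)=1$ (equivalently $\sigma^{*\rm{inv}}(1)=1$) even though $\sigma^{*\rm{inv}}(P^{2r})=0$ for $r \geq 1$; this is the same subtlety one must respect if one instead prefers the direct route of expanding $(\sigma * \sigma^{*\rm{inv}})(P^m) = \sum_{\ell} \sigma(P^{\ell})\,\sigma^{*\rm{inv}}(P^{m-\ell})$ and simplifying via the telescoping identity $(1+P)\sigma(P^k) = 1 + P^{k+1}$, valid over $\F_2$, where forgetting the $\ell=m$ term (the contribution of $\sigma^{*\rm{inv}}(1)$) would spoil the cancellation.
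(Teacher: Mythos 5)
Your proof is correct, but it takes a genuinely different route from the paper. The paper proves the lemma by brute force: it expands $(\sigma * \sigma^{*\rm{inv}})(P^m) = \sum_{\ell=0}^{m} \sigma(P^{\ell})\,\sigma^{*\rm{inv}}(P^{m-\ell})$, splits into the cases $m=2r$ and $m=2r+1$, substitutes the values $\sigma^{*\rm{inv}}(P^{2t})=0$ (for $t\geq 1$) and $\sigma^{*\rm{inv}}(P^{2t+1})=P^{t}(1+P)$ from Lemma \ref{express-invsigma}, and then collapses the resulting telescoping sums over $\F_2$. You instead take seriously the identity recorded in the subsection heading, $\sigma * \sigma^{*\rm{inv}} = (\sigma^* * \sigma^{\rm{inv}})^{\rm{inv}}$, justify it from Lemma \ref{propert-invconv}, and then invert $g := \sigma^* * \sigma^{\rm{inv}}$ using the recursion of Lemma \ref{definvconv}; since $g$ vanishes at every positive prime power except $P^2$ (by the preceding lemma), the recursion degenerates to the two-step relation $h(P^m) = P\cdot h(P^{m-2})$, and the claim follows at once from the base values $h(1)=1$, $h(P)=0$. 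Your argument is shorter and structurally cleaner: it replaces the sum manipulations by a single observation about the support of $g$, and it actually explains why the paper bothers to state the inversion identity in the section title (the paper's own proof never uses it). The trade-off is a dependency: your proof needs both the inversion formalism of Lemma \ref{propert-invconv} and the preceding lemma computing $\sigma^* * \sigma^{\rm{inv}}$, whereas the paper's computation is self-contained given only the values of $\sigma^{*\rm{inv}}$ at prime powers. Your cautionary remark about the $\ell = m$ term (the contribution of $\sigma^{*\rm{inv}}(1)=1$) is well placed; that is exactly the bookkeeping point on which the direct route, and the paper's own even-case computation, hinges.
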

\begin{proof}
One has:\\
$\begin{array}{lcl}
(\sigma * \sigma^{\rm{*inv}})(P^m) &=& \dis{\sum_{\ell=0}^{m} \sigma(P^{\ell}) \cdot \sigma^{\rm{*inv}}(P^{m-\ell})}\\
&=&\dis{\sigma^{\rm{*inv}}(P^m)+ \sigma(P^m)+ \sum_{\ell=1}^{m-1} \sigma(P^{\ell}) \cdot \sigma^{\rm{*inv}}(P^{m-\ell})}.
%&=&\dis{P+\cdots + P^{m-1}+ \frac{1}{1+P} \sum_{\ell=1}^{m-1}   (1+P^{\ell}) (1+P^{m-\ell+1})}\\
%&=&\dis{\frac{P}{1+P} \cdot (1+P^{m-1}) +  \frac{1}{1+P} \sum_{\ell=1}^{m-1}   (1+P^{\ell}) (1+P^{m-\ell+1})}\\
%&=&\dis{\frac{1}{1+P} \cdot [P+P^m+ \sum_{\ell=1}^{m-1} (1+P^{\ell} +P^{m-\ell+1} + P^{m+1})]}\\
%&=&\dis{\frac{1}{1+P} \cdot (m-1)(1+P^{m+1})}\\
%&=&(m-1) \sigma(P^m).
\end{array}$\\
$\bullet$ Case where $m=2r$ is even\\
$\sigma^{\rm{*inv}}(P^m) = 0$ and $\sigma^{\rm{*inv}}(P^{m-\ell}) =0$, if $m-\ell$ is even.\\
Put $m-\ell =2r-\ell= 2s+1$.\\
$\begin{array}{lcl}
(\sigma * \sigma^{\rm{*inv}})(P^m) &=&\dis{\sigma(P^m) + \sum_{s=0}^{r-1} \sigma(P^{2r-2s-1}) \cdot
\sigma^{\rm{*inv}}(P^{2s+1})}\\
&=&\dis{\sigma(P^m) + \sum_{s=0}^{r-1} \sigma(P^{2r-2s-1}) \cdot
P^{s} (1+P)}\\
&=&\dis{\sigma(P^m) + \sum_{s=0}^{r-1} (1+P^{2r-2s}) \cdot
P^{s}}\\
&=&\dis{\sigma(P^{2r}) + \sum_{s=0}^{r-1} (P^s+P^{2r-s})}\\
&=&P^r.
\end{array}$\\
$\bullet$ Case where $m=2r+1$ is odd\\
$\sigma^{\rm{*inv}}(P^m) = P^r(1+P)$ and $\sigma^{\rm{*inv}}(P^{m-\ell}) =0$, if $m-\ell$ is even.\\
Put $m-\ell =2r+1-\ell= 2s+1$.\\
$\begin{array}{lcl}
(\sigma * \sigma^{\rm{*inv}})(P^m) &=&\dis{P^r(1+P) + \sigma(P^{2r+1}) + \sum_{s=0}^{r-1} \sigma(P^{2r-2s}) \cdot
\sigma^{\rm{*inv}}(P^{2s+1})}\\
&=&\dis{P^r + P^{r+1} + \sigma(P^{2r+1}) + \sum_{s=0}^{r-1} \sigma(P^{2r-2s}) \cdot
P^{s} (1+P)}\\
&=&\dis{P^r + P^{r+1} + \sigma(P^{2r+1}) + \sum_{s=0}^{r-1} (1+P^{2r-2s+1}) \cdot
P^{s}}\\
&=&\dis{P^r + P^{r+1} + \sigma(P^{2r+1}) + \sum_{s=0}^{r-1} (P^s+P^{2r+1-s})}\\
&=&0.
\end{array}$
\end{proof}
\begin{corollary}
If $A$ is special and perfect, then
$$\text{$\dis{A = rad(A)+ \sum_{D \mid A, D \not= A,1, D \text{square-free}} \sigma^{*\rm{inv}}(D) \cdot \sigma(\frac{A}{D})}$.}$$
\end{corollary}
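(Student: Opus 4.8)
The plan is to evaluate the convolution $\sigma * \sigma^{*\rm{inv}}$ at $A$ and then read off the stated identity directly from the definition of the convolution, exactly as in the preceding corollaries. Since $A$ is special and perfect, we have $A = S^2$ with $S$ square-free and $\sigma(A) = A$; equivalently, writing $A = P_1^2 \cdots P_k^2$ with the $P_i$ distinct irreducibles, every prime factor of $A$ occurs with exponent exactly $2$. This observation is what will let us pin down all the relevant values.

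First I would compute $(\sigma * \sigma^{*\rm{inv}})(A)$ by multiplicativity. The immediately preceding lemma gives $(\sigma * \sigma^{*\rm{inv}})(P^{2r}) = P^r$, so taking $r = 1$ yields $(\sigma * \sigma^{*\rm{inv}})(P_i^2) = P_i$ for each $i$; hence $(\sigma * \sigma^{*\rm{inv}})(A) = \prod_{i=1}^k P_i = rad(A)$, since $rad(A) = P_1 \cdots P_k = S$. Next I would expand the convolution from its definition, $(\sigma * \sigma^{*\rm{inv}})(A) = \sum_{D \mid A} \sigma^{*\rm{inv}}(D)\,\sigma(A/D)$ (using commutativity of $*$), and isolate the two extreme terms. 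The term $D = 1$ contributes $\sigma^{*\rm{inv}}(1)\,\sigma(A) = A$ because $A$ is perfect, while the term $D = A$ contributes $\sigma^{*\rm{inv}}(A)\,\sigma(1) = \sigma^{*\rm{inv}}(A)$, which vanishes: since $A = \prod P_i^2$ has only even exponents, the formula $\sigma^{*\rm{inv}}(P^{2r}) = 0$ together with multiplicativity gives $\sigma^{*\rm{inv}}(A) = 0$. Combining with the first step yields $rad(A) = A + \sum_{D \mid A,\, D \neq 1, A} \sigma^{*\rm{inv}}(D)\,\sigma(A/D)$, which rearranges over $\F_2$ to the claimed formula, apart from the restriction of the sum to square-free $D$.

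The step I expect to be the real point (rather than a genuine obstacle) is justifying that restriction to square-free divisors. For a divisor $D = \prod_i P_i^{e_i}$ of $A = \prod_i P_i^2$ one has $0 \le e_i \le 2$, and by multiplicativity $\sigma^{*\rm{inv}}(D) = \prod_i \sigma^{*\rm{inv}}(P_i^{e_i})$. From the explicit values $\sigma^{*\rm{inv}}(P^0) = 1$, $\sigma^{*\rm{inv}}(P^1) = 1 + P$ and $\sigma^{*\rm{inv}}(P^2) = 0$, this product vanishes precisely when some $e_i = 2$, i.e. precisely when $D$ is not square-free. Hence every non-square-free divisor contributes $0$ to the sum, so the full sum over $D \neq 1, A$ equals its restriction to square-free $D$; and since $A$ itself is not square-free, excluding $D = A$ under that restriction is automatic. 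This gives $A = rad(A) + \sum_{D \mid A,\, D \neq 1, A,\, D \text{ square-free}} \sigma^{*\rm{inv}}(D)\,\sigma(A/D)$, as required. The only care needed is bookkeeping of which extreme terms survive, since all the arithmetic is over $\F_2$ where signs disappear.
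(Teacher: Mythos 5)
Your proof is correct and takes essentially the same route as the paper: the paper's one-line proof invokes exactly the two facts you derive, namely $(\sigma^{*\rm{inv}} * \sigma)(A) = rad(A)$ (via multiplicativity and the preceding lemma at $P_i^2$) and $\sigma^{*\rm{inv}}(A) = 0$. Your additional justification that non-square-free divisors contribute nothing, because $\sigma^{*\rm{inv}}(P_i^2)=0$ kills the corresponding products, is precisely the bookkeeping the paper leaves implicit.
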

\begin{proof}
It follows from the facts: $\sigma^{*\rm{inv}}(A) = 0$ and $(\sigma^{*\rm{inv}} *  \sigma)(A)= rad(A)$.
\end{proof}
\section{More results: $\sigma^{\rm{inv}} * \phi$, $\sigma^{*\rm{inv}} * \phi$ and $\phi * {\rm{Id}}$}
%\subsection{The convolutions $\sigma^{\rm{inv}} * \phi$, $\sigma^{*\rm{inv}} * \phi$ and $\phi * {\rm{Id}}$}
By similar computations, we get
\begin{lemma} Let $P$ be irreducible and $m\geq 1$. Then\\
i) $(\sigma^{\rm{inv}} * \phi)(P^2) = 1$ and $(\sigma^{\rm{inv}} * \phi)(P^m) = 0$ if $m \not= 2$.\\
ii) $(\sigma^{*\rm{inv}} * \phi)(P^{2m}) = \phi(P^m)$ and $(\sigma^{\rm{inv}} * \phi)(P^{2m-1}) = 0$.\\
iii) $(\phi * {\rm{Id}})(P)=1$ and $(\phi * {\rm{Id}})(P^{2m}) = P^{2m} = (\phi * {\rm{Id}})(P^{2m+1})$.
\end{lemma}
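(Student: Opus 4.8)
The plan is to avoid the triple sums entirely by exploiting the factorization $\phi = {\rm{Id}} * \mu$ (Examples~\ref{convexamples2}) together with the associativity and commutativity of convolution (Lemma~\ref{convmultip}). The engine throughout is the elementary observation that, for any multiplicative $h$ and any $n \geq 1$, one has $(h * \mu)(P^n) = h(P^n) + h(P^{n-1})$, since $\mu(P^0) = \mu(P) = 1$ while $\mu(P^j) = 0$ for $j \geq 2$; so convolving with $\mu$ just forms a consecutive ``difference''. Hence each of the three products reduces to applying this difference to a function whose values at prime powers are already known.

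For (i), I would write $\sigma^{\rm{inv}} * \phi = \sigma^{\rm{inv}} * ({\rm{Id}} * \mu) = (\sigma^{\rm{inv}} * {\rm{Id}}) * \mu = \mu * \mu = \mu^{\rm{2conv}}$, using $\sigma^{\rm{inv}} * {\rm{Id}} = \mu$ from \S\ref{invsigma}. Lemma~\ref{squareconv} applied to $f = \mu$ then gives $\mu^{\rm{2conv}}(P^{2r}) = (\mu(P^r))^2$ and $\mu^{\rm{2conv}}(P^{2r+1}) = 0$; since $\mu(P^r) = 1$ exactly for $r \in \{0,1\}$, the only nonzero value on $\{P^m : m \geq 1\}$ occurs at $m = 2$, as claimed. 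For (iii), the same manoeuvre gives $\phi * {\rm{Id}} = {\rm{Id}} * {\rm{Id}} * \mu = {\rm{Id}}^{\rm{2conv}} * \mu$; Lemma~\ref{squareconv} with $f = {\rm{Id}}$ yields ${\rm{Id}}^{\rm{2conv}}(P^{2r}) = P^{2r}$ and ${\rm{Id}}^{\rm{2conv}}(P^{2r+1}) = 0$, and the $\mu$-difference then produces $P^{2m}$ at both $P^{2m}$ and $P^{2m+1}$ (and $1$ at $P$). For (ii), I would instead set $g = \sigma^{*\rm{inv}} * {\rm{Id}}$, whose values $g(P^n) = P^{\lfloor n/2\rfloor}$ are computed in \S\ref{invsigmastar}; then $\sigma^{*\rm{inv}} * \phi = g * \mu$, and the difference $g(P^n) + g(P^{n-1})$ evaluates to $P^m + P^{m-1} = \phi(P^m)$ when $n = 2m$ and to $P^{m-1} + P^{m-1} = 0$ when $n = 2m-1$.

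The only real care needed is with the boundary index $P^0 = 1$, where $\mu$ and the various inverses take the value $1$ dictated by $f^{\rm{inv}}(1) = 1$ rather than the value suggested by plugging $r = 0$ into the closed forms; keeping $n \geq 1$ throughout (as in the statement) and treating $m = 1$ by hand handles this. Otherwise the work is purely the $\F_2$ cancellations, so I do not expect any genuine obstacle. As a sanity alternative matching the ``by similar computations'' remark, each identity can also be verified by the direct expansion $\sum_{\ell=0}^{m}(\cdots)$ exactly as in the proof of Lemma~\ref{convsigmaId}; for instance $(\phi * {\rm{Id}})(P^m) = P^m + m(P^m + P^{m-1})$ over $\F_2$ collapses at once to the stated even/odd values.
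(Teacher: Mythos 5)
Your proof is correct, but it follows a genuinely different route from the paper's. The paper gives no written argument at all: the lemma is introduced by ``By similar computations, we get,'' i.e.\ the intended proof is the same direct expansion of $\sum_{\ell=0}^{m}$ at prime powers as in Lemmas \ref{convsigmaId}, \ref{convsigmaphi} and their analogues. You instead argue inside the convolution algebra: using $\phi={\rm Id}*\mu$ (Examples \ref{convexamples2}) and associativity/commutativity (Lemma \ref{convmultip}), you reduce (i) to $(\sigma^{\rm inv}*{\rm Id})*\mu=\mu*\mu=\mu^{\rm 2conv}$ via the paper's identity $\sigma^{\rm inv}*{\rm Id}=\mu$, (iii) to ${\rm Id}^{\rm 2conv}*\mu$, and (ii) to $(\sigma^{*\rm inv}*{\rm Id})*\mu$ using the values $(\sigma^{*\rm inv}*{\rm Id})(P^n)=P^{\lfloor n/2\rfloor}$ from \S\ref{invsigmastar}; then Lemma \ref{squareconv} and your observation that $(h*\mu)(P^n)=h(P^n)+h(P^{n-1})$ for $n\ge 1$ finish each case. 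I checked all three reductions and the boundary values ($m=1$, $m=2$): they are correct, as is your closing direct formula $(\phi*{\rm Id})(P^m)=P^m+m(P^m+P^{m-1})$ over $\F_2$. What your route buys is structure: the identities $\sigma^{\rm inv}*\phi=\mu^{\rm 2conv}$ and $\phi*{\rm Id}={\rm Id}^{\rm 2conv}*\mu$ explain \emph{why} the answers follow the even/odd pattern of Lemma \ref{squareconv}, and the argument recycles earlier lemmas instead of redoing sums; the paper's (implicit) direct computation is self-contained and needs no prior identities, but hides this structure. One remark: part (ii) as printed reads $(\sigma^{\rm inv}*\phi)(P^{2m-1})=0$, almost surely a typo for $\sigma^{*\rm inv}$; you silently adopted the corrected reading, which is the natural one, and in either reading the claim holds, since the $\sigma^{\rm inv}$ version is already contained in your part (i).
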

\begin{comment}
\subsection{Some fixed points}
We study, since a few moment, the fixed points of $\sigma$ and those of $\sigma^*$. We call them (unitary) perfect polynomials. The question to determine all of them remains open. Here, we may give the fixed points of the Dirichlet inverse and the square convolution of the above multiplicative functions. We get more simpler results. Lemma \ref{A=1} is immediate.
\begin{lemma} \label{A=1}
i) $\sigma^{\rm{inv}}(A)= A  \ \iff A =1 \ \iff \sigma^{*\rm{inv}}(A)= A$.\\
ii) $\phi(A) = A \ \iff A=1 \ \iff \phi^{\rm{inv}}(A)= A$.
\end{lemma}
\begin{lemma}
i) $\phi^{\rm{2conv}}(A)= A$ if and only if $A=1$.\\
ii) $\sigma^{\rm{2conv}}(A)= A$ if and only if $A =S^2$ with $S$ perfect.\\
iii) $\sigma^{*\rm{2conv}}(A)= A$ if and only if $A =S^2$ with $S$ unitary perfect.
\end{lemma}
\begin{proof}
It follows from Corollary \ref{corol-squareconv}.
\end{proof}
\end{comment}

\def\biblio{\def\titrebibliographie{References}\thebibliography}
\let\endbiblio=\endthebibliography

%\def\references{\def\titrebibliographie{R\' ef\'
%erences}\thebibliography}
%\let\endreferences=\endthebibliography

%%%% MACROS DE SEROUL POUR LES REFERENCES %%%%

%%%%%%% bibliographie selon AMS style %%%%%%%%%%%
%%%%%%% inspir de TUGboat 11 (1990), p. 609 %%%%%%%

\newbox\auteurbox
\newbox\titrebox
\newbox\titrelbox
\newbox\editeurbox
\newbox\anneebox
\newbox\anneelbox
\newbox\journalbox
\newbox\volumebox
\newbox\pagesbox
\newbox\diversbox
\newbox\collectionbox
%--------------------------------------------
\def\fabriquebox#1#2{\par\egroup
\setbox#1=\vbox\bgroup \leftskip=0pt \hsize=\maxdimen \noindent#2}
%--------------------------------------------
\def\bibref#1{\bibitem{#1}

%\mbox{}\ignorespaces

\setbox0=\vbox\bgroup}
%--------------------------------------------
\def\auteur{\fabriquebox\auteurbox\styleauteur}
\def\titre{\fabriquebox\titrebox\styletitre}
\def\titrelivre{\fabriquebox\titrelbox\styletitrelivre}
\def\editeur{\fabriquebox\editeurbox\styleediteur}

\def\journal{\fabriquebox\journalbox\stylejournal}

\def\volume{\fabriquebox\volumebox\stylevolume}
\def\collection{\fabriquebox\collectionbox\stylecollection}
%--------------------------------------------
{\catcode`\- =\active\gdef\annee{\fabriquebox\anneebox\catcode`\-
=\active\def -{\hbox{\rm
\string-\string-}}\styleannee\ignorespaces}}
%--------------------------------------------
{\catcode`\-
=\active\gdef\anneelivre{\fabriquebox\anneelbox\catcode`\-=
\active\def-{\hbox{\rm \string-\string-}}\styleanneelivre}}
%--------------------------------------------
{\catcode`\-=\active\gdef\pages{\fabriquebox\pagesbox\catcode`\-
=\active\def -{\hbox{\rm\string-\string-}}\stylepages}}
%--------------------------------------------
{\catcode`\-
=\active\gdef\divers{\fabriquebox\diversbox\catcode`\-=\active
\def-{\hbox{\rm\string-\string-}}\rm}}
%--------------------------------------------
\def\ajoutref#1{\setbox0=\vbox{\unvbox#1\global\setbox1=
\lastbox}\unhbox1 \unskip\unskip\unpenalty}
%--------------------------------------------
\newif\ifpreviousitem
\global\previousitemfalse
\def\separateur{\ifpreviousitem {,\ }\fi}
%--------------------------------------------
\def\voidallboxes
{\setbox0=\box\auteurbox \setbox0=\box\titrebox
\setbox0=\box\titrelbox \setbox0=\box\editeurbox
\setbox0=\box\anneebox \setbox0=\box\anneelbox
\setbox0=\box\journalbox \setbox0=\box\volumebox
\setbox0=\box\pagesbox \setbox0=\box\diversbox
\setbox0=\box\collectionbox \setbox0=\null}
%--------------------------------------------
\def\fabriquelivre
{\ifdim\ht\auteurbox>0pt
\ajoutref\auteurbox\global\previousitemtrue\fi
\ifdim\ht\titrelbox>0pt
\separateur\ajoutref\titrelbox\global\previousitemtrue\fi
\ifdim\ht\collectionbox>0pt
\separateur\ajoutref\collectionbox\global\previousitemtrue\fi
\ifdim\ht\editeurbox>0pt
\separateur\ajoutref\editeurbox\global\previousitemtrue\fi
\ifdim\ht\anneelbox>0pt \separateur \ajoutref\anneelbox
\fi\global\previousitemfalse}
%--------------------------------------------
\def\fabriquearticle
{\ifdim\ht\auteurbox>0pt        \ajoutref\auteurbox
\global\previousitemtrue\fi \ifdim\ht\titrebox>0pt
\separateur\ajoutref\titrebox\global\previousitemtrue\fi
\ifdim\ht\titrelbox>0pt \separateur{\rm in}\
\ajoutref\titrelbox\global \previousitemtrue\fi
\ifdim\ht\journalbox>0pt \separateur
\ajoutref\journalbox\global\previousitemtrue\fi
\ifdim\ht\volumebox>0pt \ \ajoutref\volumebox\fi
\ifdim\ht\anneebox>0pt  \ {\rm(}\ajoutref\anneebox \rm)\fi
\ifdim\ht\pagesbox>0pt
\separateur\ajoutref\pagesbox\fi\global\previousitemfalse}
%--------------------------------------------
\def\fabriquedivers
{\ifdim\ht\auteurbox>0pt
\ajoutref\auteurbox\global\previousitemtrue\fi
\ifdim\ht\diversbox>0pt \separateur\ajoutref\diversbox\fi}
%--------------------------------------------
\def\endbibref
{\egroup \ifdim\ht\journalbox>0pt \fabriquearticle
\else\ifdim\ht\editeurbox>0pt \fabriquelivre
\else\ifdim\ht\diversbox>0pt \fabriquedivers \fi\fi\fi.\voidallboxes}
%--------------------------------------------

\let\styleauteur=\sc
\let\styletitre=\it
\let\styletitrelivre=\sl
\let\stylejournal=\rm
\let\stylevolume=\bf
\let\styleannee=\rm
\let\stylepages=\rm
\let\stylecollection=\rm
\let\styleediteur=\rm
\let\styleanneelivre=\rm

\begin{biblio}{99}
%\addcontentsline{toc}{section}{Bibliographie}

%\begin{bibref}{Ahmadi}
%\auteur{O. Ahmadi, G. Vega}  \titre{On the parity of the number of irreducible factors of self-reciprocal polynomials
%over finite fields}
%\journal{Finite Fields Appl.} \volume{14} \pages
%124-131 \annee
%2008
%\end{bibref}

\begin{bibref}{Beard2}
\auteur{J. T. B. Beard Jr}  \titre{Perfect polynomials revisited}
\journal{Publ. Math. Debrecen} \volume{38/1-2} \pages 5-12 \annee
1991
\end{bibref}

%\begin{bibref}{BeardU}
%\auteur{J. T. B. Beard Jr} \titre{Unitary perfect polynomials over
%$GF(q)$} \journal{Rend. Accad. Lincei} \volume{62} \pages 417-422
%\annee 1977
%\end{bibref}

%\begin{bibref}{BeardU2}
%\auteur{J. T. B. Beard Jr, A. T. Bullock, M. S. Harbin}
%\titre{Infinitely many perfect and unitary perfect polynomials}
%\journal{Rend. Accad. Lincei} \volume{63} \pages 294-303 \annee 1977
%\end{bibref}

%\begin{bibref}{Beard}
%\auteur{J. T. B. Beard Jr, J. R. Oconnell Jr, K. I. West}
%\titre{Perfect polynomials over $GF(q)$} \journal{Rend. Accad.
%Lincei} \volume{62} \pages 283-291 \annee 1977
%\end{bibref}

\begin{bibref}{Canaday}
\auteur{E. F. Canaday} \titre{The sum of the divisors of a
polynomial} \journal{Duke Math. J.} \volume{8} \pages 721-737 \annee
1941
\end{bibref}

\begin{bibref}{Pollack}
\auteur{U. Caner Cengiz, P. Pollack and E. Trevi$\scriptsize{\tilde{N}}$o} \titre{Counting perfect polynomials} \journal{Finite Fields Appl.} \volume{47} \pages 242-255 \annee 2017
\end{bibref}

%\begin{bibref}{Fredricksen}
%\auteur{H. Fredricksen} \titre{On Trinomials x^n+x^2+1$}
% \journal{Pi Mu Epsilon J.} \volume{9} \pages 601-606 \annee
%1993
%\end{bibref}

%\begin{bibref}{DeBoer}
%\auteur{J. DeBoer} \titre{Non-existence of certain unitary perfect numbers}
% \journal{Pi Mu Epsilon J.} \volume{9} \pages 601-606 \annee
%1993
%\end{bibref}

%\begin{bibref}{Gall-Rahav0}
%\auteur{L. Gallardo, O. Rahavandrainy} \titre{On perfect polynomials
%over $\F_4$} \journal{Port. Math. (N.S.)} \volume{62(1)} \pages
%109-122 \annee 2005
%\end{bibref}

%\begin{bibref}{Gall-Rahav3}
%\auteur{L. Gallardo, O. Rahavandrainy} \titre{Perfect polynomials
%over $\F_4$ with less than five prime factors} \journal{Port. Math.
%(N.S.) } \volume{64(1)} \pages 21-38 \annee 2007
%\end{bibref}

\begin{bibref}{Gall-Rahav-newcongr}
\auteur{L. H. Gallardo, O. Rahavandrainy} \titre{New congruences for odd perfect
numbers} \journal{Rocky Mountain J. Math.}
\volume{36(1)} \pages 225-235 \annee 2006
\end{bibref}

\begin{bibref}{Gall-Rahav4}
\auteur{L. H. Gallardo, O. Rahavandrainy} \titre{Odd perfect
polynomials over $\F_2$} \journal{J. Th\'eor. Nombres Bordeaux}
\volume{19} \pages 165-174 \annee 2007
\end{bibref}

\begin{bibref}{Gall-Rahav7}
\auteur{L. H. Gallardo, O. Rahavandrainy} \titre{There is no odd
perfect polynomial over $\F_2$ with four prime factors}
\journal{Port. Math. (N.S.)} \volume{66(2)} \pages 131-145 \annee
2009
\end{bibref}

\begin{bibref}{Gall-Rahav5}
\auteur{L. H. Gallardo, O. Rahavandrainy} \titre{Even perfect
polynomials over $\F_2$ with four prime factors} \journal{Intern. J.
of Pure and Applied Math.} \volume{52(2)} \pages 301-314 \annee 2009
\end{bibref}

\begin{bibref}{Gall-Rahav13}
\auteur{L. H. Gallardo, O. Rahavandrainy} \titre{Characterization of Sporadic perfect
polynomials over $\F_{2}$ } \journal{Functiones et Approx.} \volume{55.1} \pages 7-21 \annee 2016
\end{bibref}

\begin{bibref}{Gall-Rahav-Mers}
\auteur{L. H. Gallardo, O. Rahavandrainy} \titre{All even (unitary) perfect
polynomials over $\F_{2}$ with only Mersenne primes as odd divisors} \journal{Kragujevac J. Math.} \volume{49(4)} \pages 639-652 \annee 2025
\end{bibref}

\begin{bibref}{Touchard}
\auteur{J. Touchard} \titre{On prime numbers and perfect numbers} \journal{Scripta Math.} \volume{19} \pages 35-39 \annee 1953
\end{bibref}

\end{biblio}
\end{document}